\documentclass[11pt]{article}

\usepackage{amsmath,amssymb,amsthm}
\usepackage{tikz}
\usetikzlibrary{arrows.meta}
\usepackage[margin=1in]{geometry}
\newtheorem{theorem}{Theorem}
\newtheorem{lemma}{Lemma}
\newtheorem{proposition}{Proposition}
\newtheorem{remark}{Remark}
\theoremstyle{definition}
\newtheorem{definition}{Definition}
\theoremstyle{plain}

\newcommand{\HH}{\mathbb H}
\newcommand{\SSph}{\mathbb S}
\newcommand{\Om}{\Omega}
\newcommand{\Lip}{\operatorname{Lip}}
\newcommand{\diam}{\operatorname{diam}}
\newcommand{\eps}{\varepsilon}

\title{A Large-Diameter Fundamental-Gap Lower Bound for Horoconvex Domains}
\author{\begin{tabular}{c}
		Xianzhe Dai\\
		University of California, Santa Barbara\\
		\texttt{dai@math.ucsb.edu}\\[0.55em]
		John M. Ennis\\
		Aigora, Richmond, VA\\
		\texttt{john.m.ennis@aigora.com}\\[0.55em]
		Xuan Hien Nguyen\\
		Iowa State University\\
		\texttt{xhnguyen@iastate.edu}\\[0.55em]
		Guofang Wei\\
		University of California, Santa Barbara\\
		\texttt{wei@math.ucsb.edu}
\end{tabular}}
\date{June 10, 2026}

\begin{document}
	\maketitle
	
	\begin{abstract}
		We prove a large-diameter fundamental-gap lower bound for compact horoconvex
		domains in real hyperbolic space of curvature \(-1\).  The geometric part
		reduces large horoconvex domains to a fixed-width radial-height problem in all
		dimensions.  The analytic part proves the needed radial-height theorem by
		comparing the low-energy Dirichlet form with a limiting angular operator on the
		sphere, while the radial complement is separated by a one-dimensional branch
		gap and endpoint Green estimates.  The result gives the polynomial \(D^{-3}\)
		scale matching the Nguyen--Stancu--Wei large-diameter upper bound.
	\end{abstract}
	
	\paragraph*{Note added (June 2026).}
	The present project was publicly announced by Ennis on May 30, 2026; a
	May 31, 2026 follow-up post stated that the \(D^{-3}\) lower-bound theorem had
	been proved and displayed the manuscript abstract.  While the exposition of
	this manuscript was being revised, Park posted a contemporaneous preprint on
	June 9, 2026 \cite{ParkHoroconvex}, deriving the same \(D^{-3}\) scale by a
	different asymptotic route; Park's preprint also notes the public
	announcement.  We cite Park here to record the contemporaneous work and
	clarify the public chronology.
	
	\section{Introduction}
	
	For a bounded domain \(\Omega\), the fundamental gap
	\(\lambda_2(\Omega)-\lambda_1(\Omega)\) measures the separation between the
	ground state and the first excited Dirichlet state.  In Euclidean space,
	Andrews--Clutterbuck proved the sharp fundamental-gap conjecture for convex
	domains \cite{AndrewsClutterbuck}, building on the log-concavity tradition
	initiated by Brascamp--Lieb \cite{BrascampLieb}.  On the sphere, sharp
	positive-curvature analogues and refinements were obtained by
	Seto--Wang--Wei \cite{SetoWangWei}, He--Wei--Zhang \cite{HeWei}, and
	Dai--Seto--Wei \cite{DaiSetoWei}; see also the probabilistic approach of
	Cho--Wei--Yang \cite{ChoWeiYang} and the surface modulus-of-concavity work of
	Khan--Tuerkoen--Wei \cite{KhanTuerkoenWei}.  These results show how strongly
	the sign of curvature interacts with convexity and first-eigenfunction
	concavity.
	
	Negative curvature changes the picture.  Ordinary geodesic convexity is not
	strong enough to force any diameter-only lower bound:
	Bourni--Clutterbuck--Nguyen--Stancu--Wei--Wheeler showed that convex domains
	in hyperbolic space can have arbitrarily small fundamental gap \cite{BCNSWW}.
	Khan--Nguyen extended this negative-curvature failure mechanism beyond the
	constant-curvature hyperbolic model \cite{KhanNguyen}.  Related recent work
	of Clutterbuck--J\"ackel--Nguyen and J\"ackel shows that constant potentials
	do not minimize the fundamental gap on convex domains in negative curvature
	\cite{ClutterbuckJackelNguyen,JackelHadamard}.
	Nguyen--Stancu--Wei proposed horoconvexity as the natural stronger replacement in
	\(\mathbb H^n\), studied geodesic balls and large horoconvex domains, and
	proved the upper bound \(\lambda_2-\lambda_1\le C(n)D^{-3}\) for large
	horoconvex domains \cite[Theorem 1.1]{NSW}. 
	In a recent paper framed in the broader setting of conformally flat
	manifolds, Khan--Tuerkoen give a qualitative positive lower bound for compact
	horoconvex domains using conformal log-concavity estimates
	\cite{KhanTuerkoen}, building on the conformal-concavity framework of
	Khan--Saha--Tuerkoen
	\cite{KhanSahaTuerkoenMathZ,KhanSahaTuerkoenAGAG}.  Their explicit
	large-diameter bound is much smaller than the expected \(D^{-3}\) scale; in
	particular, it decays doubly exponentially in the diameter, and they raise the
	true diameter rate as an open question.
	
	 Using Krist\'aly's large-radius expansion and the 
	identification of the first nonradial branch
	\cite{Kristaly,NSW}, for geodesic balls we prove
	\[
	\lambda_2(B_R^{\HH^n})-\lambda_1(B_R^{\HH^n})
	\sim {4\pi^2\over (n-1)R^3}
	\qquad (R\to\infty).
	\]
	
	More generally, for horoconvex domains, Theorem~\ref{thm:main} gives the polynomial large-diameter lower bound
	matching the Nguyen--Stancu--Wei upper-bound power.  The proof uses radius
	control and a
	 radial-height spectral reduction.  The
	Borisenko--Miquel radius estimate first puts every large horoconvex domain in a
	fixed-width annulus about a common center.  The horospherical support
	representation then converts the boundary to a uniformly bounded Lipschitz
	radial height.  After smoothing and a varying-domain squeeze, the proof reduces
	to a radial-height spectral theorem in every dimension.  The
	limiting first-radial-branch form is a nonlocal angular operator on
	\(\mathbb S^{n-1}\).  A compactness argument, using positivity improvement of
	the subordinated Poisson semigroup, supplies a uniform angular gap.
	For comparison, we also record the simpler direct inball/circumball proof that
	already gives the theorem in dimensions \(n=2,3\).
	
	In fact, Theorem 2 establishes the same large-diameter lower bound for a class of domains going beyond the horoconvex category.
	
	Large-diameter degeneration also appears in work on stable large-scale
	structure under Ricci lower bounds, including universal-cover stability
	\cite{SormaniWeiCompact,SormaniWeiNoncompact,EnnisWeiCompact,EnnisWeiNoncompact},
	fundamental-group structure \cite{KapovitchWilking}, and semi-local simple
	connectedness \cite{PanWei}.  That topological literature is not used below,
	but it motivates the same organizing habit: identify a stable auxiliary object
	before estimating a degenerating sequence.  Here the stable object is the
	fixed-width radial-height class with its limiting angular operator.
	
	This leaves the broader hyperbolic log-concavity question open.  Related
	positive-curvature surface results were obtained by
	Khan--Nguyen--Tuerkoen--Wei \cite{KhanNguyenTuerkoenWei}.  Wei--Xiao
	recently proved super log-concavity of the first eigenfunction for
	horoconvex domains of sufficiently small diameter and showed the sharpness of
	that diameter restriction \cite{WeiXiao}.  The result below addresses the
	opposite large-diameter spectral regime; it does not prove even the weaker
	ordinary log-concavity inequality \(\operatorname{Hess}\log\psi_1\le0\) for
	general horoconvex domains.
	
	\section{Main Result and Proof Strategy}
	\label{sec:main-result}
	
	All hyperbolic spaces below have constant sectional curvature \(-1\).
	For compact domains, Dirichlet eigenvalues are taken on the interior.
	
	\begin{theorem}[Large-diameter horoconvex gap]
		\label{thm:main}
		For every \(n\geq 2\) there exist constants
		\[
		c_n>0,\qquad D_n<\infty
		\]
		such that every compact horoconvex domain
		\(\Om\subset \HH^n\) with diameter \(D\geq D_n\) satisfies
		\[
		\lambda_2(\Om)-\lambda_1(\Om)\geq c_n D^{-3}.
		\]
	\end{theorem}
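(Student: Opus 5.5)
The plan follows the strategy outlined in Section~\ref{sec:main-result}: a geometric reduction to a fixed-width radial-height problem, then a spectral analysis of that problem in which the radial and angular variables are separated.

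\textbf{Step 1 (radius control).} By the Borisenko--Miquel estimate, a compact horoconvex domain with inradius $r$ and circumradius $R$ about a suitable common center $o$ satisfies $R-r\le C$ for a universal constant $C$; in fact $R - r \le \log 2$ or similar. Since $D\ge D_n$ forces $R\ge D/2$ to be large, we obtain
\[
B_r(o)\subset\Om\subset B_{r+C}(o),\qquad D/2-C\le r\le D/2 .
\]

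\textbf{Step 2 (radial-height representation).} In geodesic polar coordinates $(t,\theta)$ about $o$, write $\partial\Om=\{t=r+h(\theta)\}$. Using the horospherical support function of $\Om$, we check that each ray from $o$ meets $\partial\Om$ exactly once and that $0\le h\le C$ with $\Lip(h)\le C'$, uniformly in $r$. We then smooth $h$. Dirichlet eigenvalues are monotone under inclusion, and squeezing between the domains for $h\pm\eps$ perturbs $\lambda_1$ and $\lambda_2$ by $o(D^{-3})$, because the radial scale is $O(1)$ while the gap scale is $D^{-3}$. This reduces the theorem to a statement about the domains
\[
\Om_{r,h}=\{t<r+h(\theta)\}, \qquad h \text{ in a compact Lipschitz class } \mathcal H .
\]

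\textbf{Step 3 (spectral structure).} Conjugate by $\sinh^{(n-1)/2}t$. The operator becomes
\[
-\partial_t^2+\tfrac{(n-1)^2}{4}+O(e^{-2t})+\sinh^{-2}t\,\Delta_{\SSph^{n-1}} .
\]
The low-energy radial problem is a one-dimensional Dirichlet problem on $[0,r+h(\theta)]$ whose first eigenvalue behaves like $\tfrac{(n-1)^2}{4}+\pi^2/(r+h)^2$. Its dependence on $h(\theta)$ has size of order $\pi^2 h/r^3\sim D^{-3}$. The angular term contributes only $e^{-2r}$ and is negligible, so the first-radial-branch form is governed by this $D^{-3}$ potential together with a coupling produced by the varying endpoint.

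Rescaling by $r^3$, I expect the quadratic form restricted to the first radial branch to converge to a limiting angular operator $L_h$ on $\SSph^{n-1}$. This should be a nonlocal, Poisson-subordinated type operator with potential $\propto h(\theta)$. The second radial branch lies at distance of order $r^{-2}$ above the first, which is far larger than $D^{-3}$. Using endpoint Green estimates for the one-dimensional problem, this branch gap lets us control the radial complement by a Schur-complement or Feshbach argument. The conclusion is that $r^3(\lambda_2-\lambda_1)$ is bounded below by the first gap of $L_h$ minus $o(1)$.

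\textbf{Step 4 (uniform angular gap).} This is the main obstacle. We need
\[
\inf_{h\in\mathcal H}\bigl(\mu_2(L_h)-\mu_1(L_h)\bigr)>0 .
\]
For each fixed $h$ the gap is positive: the subordinated Poisson semigroup $e^{-sL_h}$ is positivity improving, so the ground state of $L_h$ is simple. The class $\mathcal H$ is compact in $C^0$ by Arzel\`a--Ascoli, and $h\mapsto L_h$ is norm-resolvent continuous, so the infimum is attained and hence positive.

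The delicate part is justifying the convergence of Step 3 uniformly over $\mathcal H$, especially identifying the correct nonlocal coupling. I would attack it first by testing against products $\phi_1^{\theta}(t)\,u(\theta)$, where $\phi_1^{\theta}$ is the first one-dimensional eigenfunction at height $h(\theta)$, and then bounding the off-diagonal error terms with the Green estimates.

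Combining Steps 1--4 gives $\lambda_2-\lambda_1\ge c\,r^{-3}\ge c_nD^{-3}$ for all $D\ge D_n$.
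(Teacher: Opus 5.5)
\textbf{There is a genuine gap in Step 3.} The problem is the assertion that the angular term $\sinh^{-2}t\,\Delta_{\SSph^{n-1}}$ contributes only $e^{-2r}$. That is false.

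\emph{Why the angular term is not negligible.} After conjugation by $\sinh^{(n-1)/2}t$ the measure is flat, and the normalized first radial profile is a long sine wave, roughly $\sqrt{2/R}\,\sin(\pi t/R)$. It therefore carries mass of order $R^{-3}$ on the region $t=O(1)$, where $\sinh^{-2}t$ is of order one. So the centrifugal term moves the first radial level at exactly the gap scale. Since $l(l+n-2)+\frac{(n-1)(n-3)}{4}=\frac{(n+2l-1)(n+2l-3)}{4}$ is the $l=0$ coefficient in dimension $n+2l$, Krist\'aly's expansion gives the degree-$l$ first radial eigenvalue $\frac{(n-1)^2}{4}+\pi^2R^{-2}+b_{n+2l}R^{-3}+o_l(R^{-3})$. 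This degree dependence is the whole ball gap $4\pi^2/((n-1)R^3)$. If the angular term really were negligible, then for constant $h$ your first-branch form would be a scalar at order $R^{-3}$, and the reduction would produce no gap at all.

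\emph{What the limiting operator actually is.} The nonlocal operator you anticipate does not come from a coupling produced by the varying endpoint. It is the degree multiplier $T_nY_l=(b_{n+2l}-b_n)Y_l=2\pi^2\bigl[\psi(\frac{n-1}{2}+l)-\psi(\frac{n-1}{2})\bigr]Y_l$, with $\psi$ the digamma function. The endpoint contributes only the local potential $2\pi^2h$, through the Hadamard boundary flux $y_{l,R}'(R)\,y_{l',R}'(R)=2\pi^2R^{-3}+o(R^{-3})$. Here $y_{l,R}$ is the normalized first eigenfunction of the degree-$l$ radial operator. The correct limit is $L_h=T_n+2\pi^2h+b_n$. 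Step 3 as written neither identifies it nor contains a mechanism that would produce it.

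\textbf{Consequences for the rest of the plan.} Your product test functions $\phi_1^\theta(t)u(\theta)$ use one radial profile for all degrees. They give degree $l$ the angular energy $l(l+n-2)\int\sinh^{-2}t\,|\phi_1|^2\,dt$, which is of order $l^2R^{-3}$. For $n=2$, the true $l=0$ profile behaves like $t^{1/2}$ at the pole, and this energy is infinite for nonconstant $u$. By contrast, the optimal degree-$l$ profile costs only $(b_{n+2l}-b_n)R^{-3}$ above the degree-$0$ level, which grows like $R^{-3}\log l$. Your upper trial for $\lambda_1$ and your lower form for $\lambda_2$ would then be governed by different angular operators, and no gap follows.

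What you need is the degree-adapted lift $J_Rf=\sum_{l,m}f_{l,m}\,y_{l,R}(t)\,Y_{l,m}(\theta)$. The fixed-degree expansions are not uniform in $l$, so you also need a fixed angular cutoff $N$. On top of that you need a separate high-mode reserve and compression argument placing degrees $>N$ above $\mu_2(L_h)$. Step 3 never treats high angular modes.

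Step 4 is fine in outline once $L_h$ is known. $C^0$-compactness of the Lipschitz class plus norm-resolvent continuity is a valid substitute for the paper's weak-$*$ argument. However, positivity improvement requires $L_h$ to be a Bernstein function of the degree operator plus a bounded potential. That is exactly what Step 3 fails to supply.

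\textbf{A smaller point in Step 1.} Borisenko--Miquel does not make the inball and circumball concentric. You need the triangle inequality to get $B_{R-2\log 2}(o)\subset\Om\subset B_R(o)$ about the circumcenter, and that is the form the radial-height reduction uses.
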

	
	\begin{remark}
		The theorem is only asserted in the large-diameter regime.  No all-scale
		\(cD^{-3}\) statement is intended; small hyperbolic domains have the Euclidean
		\(D^{-2}\) scaling.
		The constant \(c_n\) produced here is not effective; the proof identifies the
		sharp large-diameter power rather than an optimal coefficient.  The compactness
		proof of Lemma~\ref{lem:angular-effective-gap} gives no quantitative lower
		bound for \(\delta_{\rm eff}(n,C)\), so this argument does not by itself
		produce an effective \(c_n\).
	\end{remark}
	
	\begin{remark}
		Khan--Tuerkoen
		\cite{KhanTuerkoen} gives a positive lower bound for compact horoconvex
		domains depending on dimension and diameter, in the sense of the qualitative
		Nguyen--Stancu--Wei question.  Their explicit large-diameter bound decays
		doubly exponentially in the diameter, and they ask for the optimal diameter
		rate.  The point of Theorem~\ref{thm:main} is the sharper polynomial
		large-diameter scale \(D^{-3}\), matching the power in the
		Nguyen--Stancu--Wei upper bound \cite[Theorem 1.1]{NSW} and in the ball
		asymptotic.
	\end{remark}
	
	In fact, the proof establishes the following more general radial-height
	estimate.  Theorem~\ref{thm:main} follows from it after the horoconvex
	geometric reduction and the support-to-radial bridge.
	
	Fix a base point $o\in \mathbb H^n$, let $(\rho, \theta)\in R^+\times S^{n-1}$ denote the geodesic polar coordinates centered at $o$.
	
	\begin{theorem}[Lipschitz radial-height theorem]
		\label{thm:sh-lip}
		Fix \(n\geq 2\), \(C<\infty\), and \(L<\infty\).  There exist constants
		\[
		c(n,C)>0,\qquad R_0(n,C,L)<\infty
		\]
		such that for every Lipschitz radial-height domain
		\[
		\Om_R(h)=\{0\leq \rho<R-h(\theta)\},\qquad
		0\leq h\leq C,\qquad \Lip(h)\leq L,
		\]
		and every \(R\geq R_0(n,C,L)\),
		\[
		\lambda_2(\Om_R(h))-\lambda_1(\Om_R(h))\geq c(n,C)R^{-3}.
		\]
		The constant \(c(n,C)\) is independent of \(L\); the Lipschitz constant
		affects only the threshold \(R_0\).
	\end{theorem}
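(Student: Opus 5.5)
We would prove Theorem~\ref{thm:sh-lip} by a ground-state transform and a split of test functions into their angular and radial parts relative to the first radial branch. Write the Laplacian in polar coordinates with volume density \(\sinh^{n-1}\rho\). Substitute \(u=\sinh^{-(n-1)/2}\rho\, v\). Away from a fixed neighbourhood of \(o\), the problem becomes a Schr\"odinger operator \(-\partial_\rho^2+\tfrac{(n-1)^2}{4}+O(e^{-2\rho})\) plus an angular term \(\sinh^{-2}\rho\,\Delta_{\SSph}\), which is exponentially small near the outer boundary. In these variables the domain is a slab \(\{\rho<R-h(\theta)\}\) of fixed width variation \(C\). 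The first eigenfunction concentrates in the last \(O(R)\)-length radial range with profile roughly \(\sin\bigl(\pi\rho/(R-h)\bigr)\). We then expect \(\lambda_1=\tfrac{(n-1)^2}{4}+\pi^2R^{-2}+O(R^{-3})\), and the first-order correction \(2\pi^2R^{-3}h(\theta)\) depends on the angle.

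The key steps, in order, are as follows.

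\textbf{Step 1 (radial complement).} Project onto the one-dimensional fibre problems \(-\partial_\rho^2\) on \([0,R-h(\theta)]\). The second radial branch lies at least \(\approx 3\pi^2R^{-2}\) above the first. Hence any test function orthogonal in each fibre to the first radial mode costs \(\gtrsim R^{-2}\gg R^{-3}\). Endpoint Green-function estimates near \(\rho=R-h\) control the errors from the varying endpoint and from the Lipschitz coupling \(\nabla_\theta h\cdot\partial_\rho\). These errors are bounded by \(L\cdot o(1)\) as \(R\to\infty\), so they only fix \(R_0(n,C,L)\).

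\textbf{Step 2 (effective angular form).} On the first-branch subspace \(v=a(\theta)\phi_\theta(\rho)\), the Dirichlet form minus \(\tfrac{(n-1)^2}{4}+\pi^2R^{-2}\) equals \(R^{-3}\) times a limiting form \(\mathcal Q_h(a)\) on \(\SSph^{n-1}\), up to \(o(R^{-3})\). The form \(\mathcal Q_h\) combines the potential \(2\pi^2 h(\theta)\) with a nonlocal kinetic term. That term arises because the angular Laplacian weighted by \(\sinh^{-2}\rho\), integrated against the profile near the endpoint, produces a subordinated-Poisson-type operator \(A\) on the sphere, a function of \(\Delta_{\SSph}\) such as \(\sqrt{-\Delta+c}\) after subordination, rather than a local one.

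\textbf{Step 3 (uniform angular gap).} We would show \(\delta_{\rm eff}(n,C)=\inf_{0\le h\le C}\bigl(\mu_2(\mathcal Q_h)-\mu_1(\mathcal Q_h)\bigr)>0\), which we expect to be Lemma~\ref{lem:angular-effective-gap}. The argument is by compactness. The set \(\{0\le h\le C\}\) is weak-\(*\) compact in \(L^\infty\), and the form depends continuously on \(h\) in norm-resolvent sense, since \(A\) has compact resolvent and \(h\) enters as a bounded potential. For each fixed \(h\), the semigroup \(e^{-t(A+2\pi^2h)}\) is positivity improving: the Poisson kernel is strictly positive and the potential is bounded, so perturbation via Trotter preserves this. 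Perron--Frobenius then gives a simple bottom eigenvalue, and lower semicontinuity of the gap yields a uniform positive minimum. This constant depends only on \(n\) and \(C\), not on \(L\).

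\textbf{Step 4 (combination).} Combine by min-max. Any \(u\perp\psi_1\) splits into a first-branch part and a radial complement. The first-branch part sees gap \(\delta_{\rm eff}R^{-3}\), the complement sees \(\gtrsim R^{-2}\), and the cross terms are \(o(R^{-3})\) once \(R\ge R_0\). This gives \(c(n,C)=\delta_{\rm eff}/2\).

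The main obstacle is Step 2, namely the rigorous identification of the limiting nonlocal operator, together with cross-term control at precision \(o(R^{-3})\). The angular coupling is exponentially weak in the bulk but of order one in scale-adjusted units. We would first try a Schur-complement or Feshbach reduction onto the first-branch subspace, using the Step 1 branch gap to invert the complement. Only the Lipschitz bound on \(h\) is used there, which is consistent with \(L\) entering only \(R_0\).
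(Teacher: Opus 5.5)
There is a genuine gap, in Steps 2 and 4. Your first-branch subspace $v=a(\theta)\phi_\theta(\rho)$ uses one radial profile for every angular frequency. On that subspace the reduced form is neither nonlocal nor decoupled from the radial complement at order $R^{-3}$.

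Test it on the round ball $h=0$ in $n=3$. The conjugated degree-$l$ operator is $-\partial_s^2+1+\lambda_l\sinh^{-2}s$ with $\lambda_l=l(l+1)$, and the fibre ground state is $\phi=\sqrt{2/R}\,\sin(\pi s/R)$. For $a=Y_l$, the excess over $E_R=1+\pi^2R^{-2}$ is
\[
\lambda_l\int_0^R\sinh^{-2}s\,\phi^2\,ds=\lambda_l\,\frac{\pi^4}{3}\,R^{-3}\bigl(1+o(1)\bigr),
\]
since $\int_0^\infty s^2\sinh^{-2}s\,ds=\pi^2/6$. So your subspace yields the \emph{local} operator $\frac{\pi^4}{3}(-\Delta_{\SSph^{2}})$. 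Integrating the $\sinh^{-2}$-weighted angular Laplacian against a fixed profile cannot produce anything nonlocal.

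The true lowest level in degree $l$ is $E_R+b_{n+2l}R^{-3}+o_l(R^{-3})$. Its excess $\tau_{n,l}=2\pi^2[\psi(\frac{n-1}{2}+l)-\psi(\frac{n-1}{2})]$ grows only like $\log l$; for $l=1$ it is $2\pi^2$, not $2\pi^4/3$. The discrepancy is exactly the cross term $\lambda_l\langle\sinh^{-2}s\,\phi,q\rangle$ with the optimal radial complement $q$. That term is generated at $s=O(1)$, where $\phi$ has amplitude $\sim R^{-3/2}$ and the coupling $\lambda_l\sinh^{-2}s$ is of order one. It is therefore of exact order $R^{-3}$, the same order as the effective operator itself.

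Consequences:
\begin{itemize}
\item Your Step 4 claim that the cross terms are $o(R^{-3})$ is false.
\item The inequality $\lambda_2\ge E_R+R^{-3}\mu_2(\mathcal Q_h)-o(R^{-3})$ that Step 4 needs already fails for the ball: $2\pi^4/3$ versus the true $2\pi^2$.
\item The Feshbach fallback does not help if you invert the complement with the Step 1 gap. The correction must be computed to leading order, not bounded, and the crude bound is useless anyway: $\lambda_l\sinh^{-2}s\,\phi\sim R^{-3/2}s^{-1}$ is not in $L^2$ near $s=0$.
\item You therefore cannot set aside a neighbourhood of $o$ in the reduction, because that is where the effective angular operator is decided. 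In $n=2$, if $\phi$ is the true critical fibre ground state, then $\phi\sim s^{1/2}$ and $Y_l\phi$ with $l\ge1$ is not even in the form domain.
\end{itemize}

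The missing idea is the paper's degree-adapted lift $J_Rf=\sum_{l,m}f_{l,m}\,y_{l,R}(s)Y_{l,m}(\theta)$. Here $y_{l,R}$ is the first eigenfunction of the full separated operator $H_{l,R}$, centrifugal term included. Since $l(l+n-2)+\frac{(n-1)(n-3)}{4}=\frac{(n+2l-1)(n+2l-3)}{4}$, degree $l$ is the radial ball problem in dimension $n+2l$. Krist\'aly's expansion then gives the multiplier $T_nY_l=(b_{n+2l}-b_n)Y_l$, a Bernstein function of $D_S$. This is the operator to which your Step 3 should be applied; Step 3 itself is essentially Lemma~\ref{lem:angular-effective-gap}.

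With this lift the unperturbed first-branch/complement cross terms vanish identically, because $y_{l,R}$ is an exact eigenfunction in each degree. Only the $h$-induced couplings remain. These are reduced to $o(R^{-3})$ plus a small fraction of the complement form, using projected Green/Jost estimates at the critical endpoint.

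Two further ingredients are then needed, and your outline does not supply them.
\begin{itemize}
\item The remainders $o_l(R^{-3})$ are not uniform in $l$. So one fixes an angular cutoff $N$ and treats degrees $l>N$ separately. Their first level is at least $E_R+b_{n+2(N+1)}R^{-3}+o_N(R^{-3})$, which lies above $\mu_2(L_h)$ once $N$ is large. The order-$R^{-3}$ low/high coupling coming from $2\pi^2h$ is then Young-split against this margin.
\item On the finite block, the height enters through the Hadamard boundary flux $\partial_sy_{l,R}(R)\,\partial_sy_{l',R}(R)=2\pi^2R^{-3}+o(R^{-3})$.
\end{itemize}
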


Note that $B_{R-C}(o) \subset \Om_R(h) \subset B_{R}(o)$. Most importantly, $\Om_R(h)$ need not be horoconvex, requiring only the boundedness and Lipschitz condition for $h$. It is also worth pointing out that the proof actually establishes the following. For any $\epsilon>0$, there exists $R_0(n,C,L, \epsilon)<\infty$ such that 
\[
\lambda_2(\Om_R(h))-\lambda_1(\Om_R(h))\geq (1-\epsilon) (\mu_2(L_h)- \mu_1(L_h)) R^{-3}, 
\]
for all \(R\geq R_0(n,C,L, \epsilon)\). Here $L_h$ is the effective angular operator introduced in Section 6.2.
	\subsection{Outline of the Proof}
	
	The proof of Theorem~\ref{thm:main} uses the same radial-height route in every
	dimension.  First, the Borisenko--Miquel inradius/circumradius estimate, in the
	form used by Nguyen--Stancu--Wei, gives \(R-r\le\log 2\).  An elementary
	triangle argument then shows that every sufficiently large compact horoconvex
	domain lies between two concentric balls whose radii differ by \(2\log 2\).
	The horospherical support representation gives a radial height \(h\) over the
	circumcenter sphere with \(0\le h\le2\log 2\) and a Lipschitz bound depending
	only on this width.
	
	Theorem~\ref{thm:sh-lip} proves the uniform radial-height estimate for this
	Lipschitz class in every dimension \(n\ge2\).  Its proof first treats smooth
	heights and then uses spherical mollification together with the offset-domain
	squeeze, using individual eigenvalue convergence rather than any monotonicity
	of the fundamental gap.  The first radial branch is lifted degree-by-degree,
	which is why the limiting angular operator is the diagonal multiplier
	\(T_nY_l=(b_{n+2l}-b_n)Y_l\), not the ordinary spherical Laplacian.  The
	constant \(c(n,C)\) is independent of the Lipschitz constant \(L\), while
	\(L\) only enlarges the large-\(R\) threshold.  Thus the Lipschitz bound
	produced by the horospherical support step affects only how far out one must
	go before the estimate applies, not the gap constant.  The constants and the
	analytic proof of the radial-height theorem are recorded in
	Section~\ref{sec:radial-height-proof}, before the final discussion.
	
	We also record an elementary shortcut in dimensions \(n=2,3\):
	Proposition~\ref{prop:low-dim-ball-comparison} uses the sharper nonconcentric
	inball/circumball comparison and domain monotonicity of indexed Dirichlet
	eigenvalues to give the \(R^{-3}\) lower bound directly.  This proposition is
	not needed for the all-dimensional final assembly, but it is a useful check on
	the geometry and the large-ball coefficient arithmetic.
	
	\section{Geometric Reduction and a Low-Dimensional Shortcut}
	
	Let \(R\) be the circumradius of \(\Om\), let \(o\) be a circumcenter, let
	\(r\) be the inradius, and let \(a\) be the center of an inball.  Thus
	\[
	B_r(a)\subset \Om\subset B_R(o).
	\]
	The inclusion \(B_r(a)\subset B_R(o)\) forces \(d(o,a)+r\leq R\), hence
	\(d(o,a)\leq R-r\), by taking the point of \(B_r(a)\) on the geodesic ray
	from \(o\) through \(a\) farthest from \(o\).
	
	The Borisenko--Miquel inradius/circumradius estimate
	\cite[Theorem 1]{BorisenkoMiquel}, as quoted and used by
	Nguyen--Stancu--Wei \cite[Theorem 4.2 and formula (15)]{NSW}, gives
	\[
	R-r\leq\log 2
	\]
	for compact horoconvex domains in curvature \(-1\).  More precisely,
	\cite[formula (15)]{NSW} gives
	\[
	R-r
	\le
	\log\frac{(1+\sqrt{\tau})^2}{1+\tau}
	<\log 2,\qquad \tau=\tanh(r/2),
	\]
	and we use only the weaker universal bound quoted in
	\cite[formula (15)]{NSW}.  Nguyen--Stancu--Wei work with compact horoconvex
	domains in the supporting-horoball sense of their Definition 4.1.  If one
	instead starts from a smooth principal-curvature formulation and approximates,
	the radius inequality itself passes under Hausdorff limits because inradius
	and circumradius are Hausdorff-continuous; this observation does not require
	asserting a separate preservation theorem for the differential formulation of
	horoconvexity.
	For the rest of the reduction we impose the large-diameter threshold
	\[
	R>2\log 2 .
	\]
	Since \(R-r\le\log 2\), we have
	\[
	2r-R=R-2(R-r)\ge R-2\log 2.
	\]
	The threshold makes \(2r-R>0\), so the inner radius
	\[
	r-d(o,a)\ge2r-R
	\]
	is positive.
	For any \(y\) with \(d(y,o)<r-d(o,a)\), the triangle inequality gives
	\[
	d(y,a)\leq d(y,o)+d(o,a)<r,
	\]
	and hence
	\[
	B_{r-d(o,a)}(o)\subset B_r(a).
	\]
	Since \(r-d(o,a)\geq2r-R\), we obtain
	\[
	B_{2r-R}(o)\subset \Om\subset B_R(o).
	\]
	Therefore
	\[
	B_{R-2\log 2}(o)\subset \Om\subset B_R(o).
	\]
	This fixed-width common-center annulus is the only place where the universal
	width \(2\log 2\) enters.
	
	\begin{proposition}[Low-dimensional ball comparison]
		\label{prop:low-dim-ball-comparison}
		Let \(n=2\) or \(n=3\).  There are constants
		\[
		c_n^{\rm ball}>0,\qquad R_n^{\rm ball}<\infty
		\]
		such that every compact horoconvex domain \(\Om\subset\HH^n\) with
		circumradius \(R\ge R_n^{\rm ball}\) satisfies
		\[
		\lambda_2(\Om)-\lambda_1(\Om)\ge c_n^{\rm ball}R^{-3}.
		\]
	\end{proposition}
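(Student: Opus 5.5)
The plan is to use only domain monotonicity of indexed Dirichlet eigenvalues together with the large-radius asymptotics for geodesic balls. From the inclusions \(B_r(a)\subset\Om\subset B_R(o)\) we get \(\lambda_2(\Om)\ge\lambda_2(B_R)\) and \(\lambda_1(\Om)\le\lambda_1(B_r)\). Hence
\[
\lambda_2(\Om)-\lambda_1(\Om)\ \ge\ \bigl(\lambda_2(B_R)-\lambda_1(B_R)\bigr)-\bigl(\lambda_1(B_r)-\lambda_1(B_R)\bigr).
\]
The concentric annulus is deliberately not used here, because the nonconcentric inball gives the sharper width bound \(R-r\le\log 2\) rather than \(2\log 2\). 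Also, \(R-r\le\log2\) together with \(R\ge R_n^{\rm ball}\) forces \(r\to\infty\) with \(r\sim R\).

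For the first bracket, I would invoke the ball asymptotic stated in the introduction:
\[
\lambda_2(B_R)-\lambda_1(B_R)=\frac{4\pi^2}{(n-1)R^3}\bigl(1+o(1)\bigr).
\]
It comes from Krist\'aly's expansion together with the Nguyen--Stancu--Wei identification of the first nonradial branch.

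For the second bracket, I would use Krist\'aly's expansion of the radial ground state,
\[
\lambda_1(B_\rho)=\frac{(n-1)^2}{4}+\frac{\pi^2}{\rho^2}-\frac{a_n}{\rho^3}+O(\rho^{-4}),
\]
or more robustly a bound on \(\frac{d}{d\rho}\lambda_1(B_\rho)\) obtained from Hadamard's variational formula. Either gives, by the mean value theorem over \([r,R]\),
\[
\lambda_1(B_r)-\lambda_1(B_R)=\frac{2\pi^2(R-r)}{R^3}\bigl(1+o(1)\bigr)\le\frac{2\pi^2\log 2}{R^3}\bigl(1+o(1)\bigr).
\]
The \(a_n\rho^{-3}\) terms contribute only \(O((R-r)R^{-4})=O(R^{-4})\). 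I expect this step to be the main technical point. An expansion that holds only pointwise in \(\rho\) is not enough, so I would take the derivative form of the expansion (or Hadamard's formula with the explicit first radial eigenfunction) to make the error uniform on \([r,R]\).

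Combining the two brackets gives
\[
\lambda_2(\Om)-\lambda_1(\Om)\ \ge\ \frac{\pi^2}{R^3}\Bigl(\frac{4}{n-1}-2\log 2\Bigr)+o(R^{-3}).
\]
Since \(2\log2\approx1.386\), the coefficient is positive exactly when \(4/(n-1)>2\log2\). This holds for \(n=2\) (where \(4/(n-1)=4\)) and \(n=3\) (where it equals \(2\)), and it fails for \(n\ge4\), which explains the dimensional restriction. For instance we may take
\[
c_n^{\rm ball}=\tfrac12\pi^2\Bigl(\frac{4}{n-1}-2\log2\Bigr),
\]
with \(R_n^{\rm ball}\) chosen large enough to absorb the \(o(R^{-3})\) errors. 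Using the sharper bound \(\log\frac{(1+\sqrt\tau)^2}{1+\tau}\) in place of \(\log 2\) would not change the limiting coefficient, since \(\tau\to1\) as \(r\to\infty\).
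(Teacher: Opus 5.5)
Your argument is correct and is essentially the paper's proof. It uses domain monotonicity with the nonconcentric inball $B_r(a)$ and circumball $B_R(o)$, and Krist\'aly's expansion together with the $l=1$ branch identification for $\lambda_2(B_R)$ and $\lambda_1(B_r)$. This yields the coefficient $\pi^2\bigl(\tfrac{4}{n-1}-2\log 2\bigr)$, and you take $c_n^{\rm ball}$ to be half of it, as the paper does. The uniformity you flag as the main technical point is not an obstacle: the paper just evaluates the pointwise expansion at the two radii $r$ and $r+\alpha$ with $0\le\alpha\le\log 2$. That expansion needs only an $o(\rho^{-3})$ remainder, not $O(\rho^{-4})$, and since $r\ge R-\log 2$ both remainders are already $o(R^{-3})$, so no derivative form or Hadamard formula is needed.
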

	
	\begin{proof}
		Let \(R\) be the circumradius, let \(r\) be the inradius, and choose centers
		\(o,a\) so that
		\[
		B_r(a)\subset \Om\subset B_R(o).
		\]
		Domain monotonicity of indexed Dirichlet eigenvalues gives
		\[
		\lambda_2(\Om)\ge \lambda_2(B_R(o)),\qquad
		\lambda_1(\Om)\le \lambda_1(B_r(a)).
		\]
		Therefore
		\[
		\lambda_2(\Om)-\lambda_1(\Om)
		\ge
		\lambda_2(B_R^{\HH^n})-\lambda_1(B_r^{\HH^n}).
		\]
		Set \(\alpha_R=R-r\).  The inradius/circumradius estimate gives
		\(0\le\alpha_R\le\log 2\).  Krist\'aly's large-radius first-eigenvalue
		expansion \cite[Theorem 1.1 and Remark 1.1(i)]{Kristaly}, together with the
		\(n\mapsto n+2\) dimension shift for the first \(l=1\) branch and the
		Benguria--Linde identification of the second ball eigenvalue
		\cite[Section 3, Lemma 3.1]{BenguriaLinde}, gives the ball-gap asymptotic.
		Using also
		\((r+\alpha)^{-2}=r^{-2}-2\alpha r^{-3}+O(r^{-4})\) uniformly for bounded
		\(\alpha\), we obtain, uniformly for \(0\le\alpha\le\log2\),
		\[
		\lambda_2(B_{r+\alpha}^{\HH^n})-\lambda_1(B_r^{\HH^n})
		=
		\left({4\pi^2\over n-1}-2\alpha\pi^2\right)r^{-3}+o(r^{-3}).
		\]
		The little-\(o(r^{-3})\) term is uniform for
		\(\alpha\in[0,\log2]\), since the expansion is evaluated at
		\(r+\alpha\in[r,r+\log2]\).
		For \(n=2,3\),
		\[
		\gamma_n:=\pi^2\left({4\over n-1}-2\log2\right)>0.
		\]
		After increasing \(R_n^{\rm ball}\), the preceding expansion with
		\(\alpha=\alpha_R\) gives
		\[
		\lambda_2(B_R^{\HH^n})-\lambda_1(B_r^{\HH^n})
		\ge {\gamma_n\over 2}r^{-3}.
		\]
		Since \(r\le R\), this is at least \((\gamma_n/2)R^{-3}\).
	\end{proof}
	
	This proposition is not used in the final all-dimensional proof below.  It is
	important, however, that the shortcut uses the original, not concentric,
	inball.  The comparison uses the nonconcentric inball and circumball, so the
	radius shift is only \(\log2\).  If one first replaced the domain by the
	concentric annulus
	\(B_{R-2\log2}(o)\subset\Om\subset B_R(o)\), the same ball arithmetic would
	use the cruder shift \(2\log2\), and the \(n=3\) coefficient would already have
	the wrong sign.  The radial-height proof avoids this scalar radius-loss by
	keeping the angular boundary displacement as part of the limiting problem.
	
	\section{From Horoconvexity to Lipschitz Radial Heights}
	
	Use the horospherical support function for horoconvex domains.  The
	Andrews--Chen--Wei framework defines the horoballs \(B_e(s)\) in Section 5.1
	and the horospherical support function in Section 5.2; in the published JEMS
	version, equations (5.1)--(5.2) define the support value and recover a compact
	horoconvex domain as the intersection of its supporting horoballs
	\cite[Sections 5.1--5.2]{ACW}.  We use this
	representation only to obtain a uniform
	Lipschitz bound for the polar radial height.  The later smoothing is ordinary
	spherical mollification of that radial height; the smooth approximating
	domains do not need to remain horoconvex.
	Thus, for compact nonsmooth sets, horoconvexity below is used in this
	supporting-horoball intersection sense.  To keep notation fixed, we denote
	the same supporting horoballs by \(\mathcal H_e(u)\) below rather than
	switching between \(B_e(s)\) and \(\mathcal H_e(u)\).
	
	The annular containment gives a polar radial-height description about \(o\).
	Writing the boundary as
	\[
	\rho(\theta)=R-h(\theta),\qquad \theta\in \SSph^{n-1},
	\]
	the fixed annular width gives
	\[
	0\leq h\leq 2\log 2.
	\]
	Figure~\ref{fig:radial-height} summarizes this representation.
	
	\begin{figure}[htbp]
		\centering
		\begin{tikzpicture}[
			scale=0.95,
			line cap=round,
			line join=round,
			>=Stealth,
			every node/.style={font=\small}
			]
			\definecolor{heightblue}{RGB}{20,116,124}
			\definecolor{heightgreen}{RGB}{92,132,96}
			\definecolor{heightgold}{RGB}{188,129,30}
			
			\node[anchor=west,font=\bfseries] at (-5.85,2.7) {Radial-height representation};
			
			\begin{scope}[shift={(-3.45,0)}]
				\node[font=\bfseries] at (0,2.15) {Annular control};
				\draw[black!55] (-2.0,1.95)--(2.0,1.95);
				\draw[black!60,thick] (0,0) circle (1.75);
				\draw[black!45] (0,0) circle (1.42);
				\foreach \a in {0,15,...,345} {
					\draw[heightblue!50] (0,0)--(\a:1.72);
				}
				\foreach \a in {7.5,37.5,...,337.5} {
					\draw[heightgreen!35] (0,0)--(\a:1.42);
				}
				\filldraw[fill=heightgreen!18,draw=heightgreen!75!black,very thick]
				plot[smooth cycle,tension=0.85] coordinates {
					(0:1.45) (32:1.58) (68:1.35) (108:1.49)
					(148:1.22) (191:1.50) (232:1.27) (275:1.54) (319:1.33)
				};
				\fill[heightblue!85!black] (0,0) circle (1.4pt);
				\draw[<->,heightgold,thick] (72:1.45)--(72:1.75);
				\node[heightgold!70!black] at (1.35,1.25) {$h(\theta)$};
				\draw[->,black!60] (-1.55,1.45) arc[start angle=135,end angle=170,radius=1.45];
				\node at (-2.0,1.05) {angle};
			\end{scope}
			
			\draw[->,black!55,thick] (-0.95,0)--(0.05,0);
			
			\begin{scope}[shift={(0.65,-1.35)}]
				\node[font=\bfseries] at (2.1,3.5) {Radial-height profile};
				\draw[black!55] (0.0,3.3)--(4.35,3.3);
				\draw[->,black!65] (0,0)--(4.55,0) node[right] {angle};
				\draw[->,black!65] (0,0)--(0,3.1);
				\node[rotate=90] at (-0.38,1.55) {height};
				
				\coordinate (p0) at (0,1.25);
				\coordinate (p1) at (0.7,1.72);
				\coordinate (p2) at (1.35,1.9);
				\coordinate (p3) at (2.2,1.25);
				\coordinate (p4) at (3.15,1.55);
				\coordinate (p5) at (4.2,2.25);
				
				\fill[heightgreen!16]
				(0,0)--(p0)
				.. controls (0.35,1.45) and (0.8,1.95) .. (p2)
				.. controls (1.75,1.85) and (1.8,1.15) .. (p3)
				.. controls (2.75,1.0) and (3.2,2.2) .. (p5)
				--(4.2,0)--cycle;
				\draw[heightgreen!70!black,very thick]
				(p0)
				.. controls (0.35,1.45) and (0.8,1.95) .. (p2)
				.. controls (1.75,1.85) and (1.8,1.15) .. (p3)
				.. controls (2.75,1.0) and (3.2,2.2) .. (p5);
				\draw[heightblue!90!black,very thick] (0,2.62)--(4.2,2.62);
				\node[left] at (0,2.62) {$R$};
				\draw[<->,heightgold,very thick] (1.35,2.62)--(1.35,1.9);
				\node[right,heightgold!70!black] at (1.38,2.25) {$h(\theta)$};
				\node[heightgreen!55!black] at (1.0,1.45) {$R-h(\theta)$};
				\draw[->,black!55] (3.5,1.8)--(3.95,2.2);
				\node[anchor=west] at (3.48,1.65) {Lipschitz};
			\end{scope}
		\end{tikzpicture}
		\caption{The annular containment \(B_{R-2\log2}(o)\subset\Omega\subset B_R(o)\)
			is rewritten as a radial graph \(\rho(\theta)=R-h(\theta)\).  The bounded
			annular width gives \(0\le h\le2\log2\), while the supporting-horoball
			description gives the Lipschitz control used below.}
		\label{fig:radial-height}
	\end{figure}
	
	\begin{lemma}[Radial representation from supporting horoballs]
		\label{lem:radial-representation}
		Fix \(R>C>0\), and suppose
		\[
		B_{R-C}(o)\subset \Om\subset B_R(o)
		\]
		with \(\Om\) compact and horoconvex.  Then \(\Om\) is star-shaped with
		respect to \(o\), its radial function
		\[
		\rho(\theta)=\sup\{t\ge0:\exp_o(t\theta)\in\Om\}
		\]
		is single-valued and finite, and
		\[
		\rho(\theta)=\inf_{e\in\SSph^{n-1}} \rho_{R,\sigma(e)}(\theta,e),\qquad
		h(\theta)=R-\rho(\theta)
		=\sup_{e\in\SSph^{n-1}} H_{R,\sigma(e),+}(\theta,e),
		\]
		where \(u_{\Om}(e)=R-\sigma(e)\), \(e\in\SSph^{n-1}\), is the horospherical
		support function and \(\rho_{R,\sigma(e)}(\theta,e)\) is the radial boundary
		of the supporting horoball with support value \(R-\sigma(e)\).
	\end{lemma}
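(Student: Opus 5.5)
We will prove the lemma in three steps: star-shapedness about \(o\), the intersection formula for the radial function, and the translation into the height \(h\). Throughout we use the supporting-horoball description of \cite[Sections 5.1--5.2]{ACW}, namely
\[
\Om=\bigcap_{e\in\SSph^{n-1}}\mathcal H_e\bigl(u_\Om(e)\bigr),
\]
where \(\mathcal H_e(u)\) is the horoball centered at the ideal point \(e\) with signed support value \(u\), measured from \(o\). We write \(u_\Om(e)=R-\sigma(e)\).

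\textbf{Step 1: star-shapedness.} Since \(o\) lies in the interior ball \(B_{R-C}(o)\subset\Om\), every supporting horoball \(\mathcal H_e(u_\Om(e))\) contains \(o\). A horoball is geodesically convex, so each one is star-shaped with respect to any of its points, in particular \(o\). Hence the intersection \(\Om\) is star-shaped about \(o\).

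Now fix a direction \(\theta\). The set \(\{t\ge0:\exp_o(t\theta)\in\Om\}\) has the following properties:
\begin{itemize}
\item it is an interval containing \([0,R-C]\), by star-shapedness and the inner ball;
\item it is bounded by \(R\), since \(\Om\subset B_R(o)\);
\item it is closed, since \(\Om\) is compact.
\end{itemize}
Therefore \(\rho(\theta)\) is a single finite number in \([R-C,R]\), and the boundary is reached exactly once along the ray.

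\textbf{Step 2: the radial formula.} For each \(e\), define \(\rho_{R,\sigma(e)}(\theta,e)\) as the exit time of the ray \(t\mapsto\exp_o(t\theta)\) from the horoball \(\mathcal H_e(R-\sigma(e))\). The ray enters the horoball at \(t=0\) and, by convexity, leaves it at most once. If the ray never leaves, the exit time is \(+\infty\), which is harmless inside an infimum.

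A point \(\exp_o(t\theta)\) lies in \(\Om\) exactly when it lies in every horoball, which happens exactly when \(t\le\rho_{R,\sigma(e)}(\theta,e)\) for all \(e\). This gives
\[
\rho(\theta)=\inf_{e\in\SSph^{n-1}}\rho_{R,\sigma(e)}(\theta,e).
\]
The infimum is finite because it equals \(\rho(\theta)\le R\).

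\textbf{Step 3: the height formula.} Subtracting from \(R\) turns the infimum into a supremum. We set
\[
H_{R,\sigma(e),+}(\theta,e)=R-\rho_{R,\sigma(e)}(\theta,e),
\]
with the value \(-\infty\) allowed when the ray never exits, and with "\(+\)" marking the relevant exit branch. This gives
\[
h(\theta)=\sup_{e\in\SSph^{n-1}}H_{R,\sigma(e),+}(\theta,e).
\]

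\textbf{Main obstacle: the convention for \(H_{R,\sigma(e),+}\).} I expect the hardest point to be pinning down \(H_{R,\sigma(e),+}\) explicitly and checking that the "\(+\)" branch is the correct root. I would first write \(\mathcal H_e(u)\) in the upper half-space model with \(o=(0,1)\) and \(e\) sent to \(\infty\). The ray \(\exp_o(t\theta)\) meets the horosphere \(\{x_n=e^{u}\}\) where \(\cosh t-\sinh t\cos\angle(\theta,e)=e^{-u}\), up to the paper's sign convention for \(u\). Solving this for \(t\) and taking the larger root gives \(\rho_{R,\sigma}(\theta,e)\), and hence \(H_{R,\sigma,+}\) as an explicit function.

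The point to verify is the direction of the inequality in Step 2. The segment \([0,\rho_{R,\sigma(e)}(\theta,e)]\) must be exactly the part of the ray inside the horoball, and never a portion lying beyond a second crossing. A second crossing cannot occur, because horoballs are convex and the ray starts inside the horoball. So the "\(+\)" root is the unique exit, which completes the argument.
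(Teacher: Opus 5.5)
Your argument is correct and is essentially the paper's proof: each supporting horoball contains $B_{R-C}(o)$, hence $o$, so convexity makes every ray from $o$ meet it in an initial interval, and intersecting over $e$ gives star-shapedness, finiteness from $\Om\subset B_R(o)$, and the infimum formula; the supremum formula then follows by subtracting from $R$. The only correction is notational: in the paper the subscript $+$ denotes the positive part, $H_{R,\sigma,+}=\max\{0,H_{R,\sigma}\}$ with $H_{R,\sigma}=R-\rho_{R,\sigma}$, and not a choice of root (your convexity remark already settles the root, and the explicit formula is computed only in the next lemma), so you need to add one line: $\Om\subset B_R(o)$ gives $h\ge0$, hence $\sup_e\max\{0,H_{R,\sigma(e)}\}=\max\{0,h\}=h$.
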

	
	\begin{proof}
		Every supporting horoball in the ACW representation contains \(\Om\), hence
		contains \(B_{R-C}(o)\).  In particular it contains \(o\) in its interior.
		Since horoballs are geodesically convex, each supporting horoball meets a
		geodesic ray from \(o\) in an interval containing the initial point.  The
		endpoint may be included or excluded depending on the open-domain convention;
		only the radial endpoint matters here.  Intersecting these intervals over all
		supporting horoballs shows that \(\Om\) meets each ray in an interval
		\([0,\rho(\theta))\).  The outer inclusion \(\Om\subset B_R(o)\) makes
		\(\rho\) finite, and the inner inclusion gives \(\rho(\theta)\ge R-C\).
		The intersection formula for \(\Om\) then gives the infimum formula for
		\(\rho\).  Subtracting from \(R\) gives the supremum formula for \(h\), and
		the positive part may be inserted because \(\Om\subset B_R(o)\) implies
		\(h\ge0\).
	\end{proof}
	
	\begin{lemma}[Support-to-radial Lipschitz bound]
		\label{lem:sr-lip}
		Fix \(C<\infty\).  Suppose a compact horoconvex domain satisfies
		\[
		B_{R-C}(o)\subset \Om\subset B_R(o).
		\]
		Then, for all \(R\geq C+1\), its polar radial height
		\(h(\theta)=R-\rho(\theta)\) satisfies
		\[
		0\leq h\leq C,\qquad \Lip(h)\leq L(C),
		\qquad L(C)=\sqrt{2}\,e^{C/2}+1.
		\]
	\end{lemma}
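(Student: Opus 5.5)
The bound \(0\le h\le C\) follows at once from the two inclusions: \(\Om\subset B_R(o)\) gives \(\rho\le R\), and \(B_{R-C}(o)\subset\Om\) gives \(\rho\ge R-C\). For the Lipschitz bound I will use Lemma~\ref{lem:radial-representation}, which writes \(\rho\) as an infimum over supporting horoballs of their radial boundary functions. An infimum of a family of functions that are uniformly Lipschitz on the relevant range is Lipschitz with the same constant. So it suffices to show the following: for every horoball \(\mathcal H\) containing \(B_{R-C}(o)\) whose boundary radial function takes values in \([R-C,R]\) near the angle considered, that radial function is \(L(C)\)-Lipschitz there. A single horoball radial function can be large or undefined at other angles. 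I will handle this by restricting to a neighbourhood of each \(\theta\) and noting that the infimum near \(\theta\) is attained by horoballs whose boundary meets that ray at distance \(\le R\).

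The main step is an explicit computation for one horoball. In the upper half-space or hyperboloid model, put \(o\) at the origin and let the horoball centered at the ideal point \(e\) have horospherical support value \(s\), i.e.\ signed distance \(s\) from \(o\) to its boundary, with \(o\) inside. Let \(\phi\) be the angle between \(\theta\) and \(-e\), the direction away from the center. The Busemann function \(b_e(\exp_o(t\theta))=\log(\cosh t+\sinh t\cos\psi)\), with \(\psi\) the angle to \(e\), gives the boundary radius \(t=\rho(\phi)\) implicitly through
\[
\cosh t-\sinh t\cos\phi=e^{s}.
\]
Implicit differentiation gives
\[
\rho'(\phi)=-\frac{\sinh t\sin\phi}{\sinh t-\cosh t\cos\phi}.
\]
So I must bound this whenever \(t\in[R-C,R]\).

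This is where I expect the real work. The denominator can become small when \(\phi\) is small and \(t\) is large. I will first extract from \(t\ge R-C\) and \(R\ge C+1\) that the horosphere passes deep, then trade the equation for a relation like
\[
e^{t}(1-\cos\phi)\approx 2e^{s}.
\]
Using \(s\ge R-C\), from the ball inclusion, and \(t\le R\), this gives
\[
1-\cos\phi\gtrsim e^{-C},
\]
which combined with the formula bounds \(|\rho'|\) by roughly \(\sqrt2\,e^{C/2}\). The additive \(+1\) in \(L(C)\) should absorb the lower-order terms from \(\coth t\ne1\), using \(t\ge R-C\ge1\). I will carry out the elementary inequalities with care so that the constant comes out exactly as \(\sqrt2 e^{C/2}+1\).

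Finally, there is the passage from local derivative bounds to a global Lipschitz bound on \(\SSph^{n-1}\) with the round metric. Each horoball radial function is smooth where finite, and the derivative bound applies along great-circle arcs, in any direction, because of rotational symmetry about \(e\). Taking the infimum then yields \(\Lip(h)=\Lip(\rho)\le L(C)\). The arcs may leave the region where a given horoball is active; this is handled by the standard argument that the infimum of functions that are locally \(L\)-Lipschitz on overlapping domains, which cover the sphere with values staying in \([R-C,R]\), is globally \(L\)-Lipschitz along geodesics.
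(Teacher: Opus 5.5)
Your proposal is correct and is essentially the paper's proof. The shared ingredients are:
\begin{itemize}
\item the single-wall equation \(\cosh t-\sinh t\cos\phi=e^{s}\); with this equation \(\phi\) is the angle to the horoball's center, where the wall runs off to infinity, not the angle to the direction away from it as you state, though everything afterwards uses the formula consistently;
\item the exact active-set relation \(1-\cos\phi=e^{s-t}(1+q)\ge e^{-C}\), where \(q=\sqrt{1-e^{-2s}\sin^2\phi}\), which follows from \(t\le R\) and \(s\ge R-C\);
\item the split \(|\rho'|=\frac{|q+\cos\phi|}{q\sin\phi}\le\sqrt{\frac{1+\cos\phi}{1-\cos\phi}}+\frac{e^{-2s}}{q(1+q)}\le\sqrt2\,e^{C/2}+1\). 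Its second term is exactly your \(\coth t\ne1\) correction, and it is at most \(1\) because \(s\ge R-C\ge1\).
\end{itemize}

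The only real difference is the globalization step. Write \(\rho_e\) for a single wall's radial function. The paper truncates each wall, using \(H_{R,\sigma,+}=R-\min(\rho_e,R)\), which is globally \(L(C)\)-Lipschitz. Since \(\rho\le R\), \(\rho=\inf_e\min(\rho_e,R)\) is then \(L(C)\)-Lipschitz immediately, and this replaces your covering argument over active neighbourhoods.
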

	
	\begin{proof}
		The \(L^\infty\) bound follows from the annulus and
		Lemma~\ref{lem:radial-representation}.  Use the ACW support
		convention \cite[equations (5.1)--(5.2)]{ACW}
		\[
		u_{\Om}(e)
		=
		\sup_{x\in \Om}
		\log\bigl(\cosh\rho(x)-\sinh\rho(x)\langle\theta(x),e\rangle\bigr).
		\]
		The centered ball \(B_a(o)\) has support value \(a\), and monotonicity gives
		\[
		R-C\leq u_{\Om}(e)\leq R.
		\]
		Write \(u_{\Om}(e)=R-\sigma(e)\), with \(0\leq \sigma(e)\leq C\).  Since
		\(\Om\) is the intersection of its supporting horoballs, it is enough to estimate one
		supporting wall and then take suprema.  Here the centered geodesic balls
		\(B_a(o)\) are used only to normalize the support range; the single-wall
		kernel below is the boundary of a horoball
		\[
		\mathcal H_e(u)
		=
		\left\{
		\log(\cosh\rho-\sinh\rho\,\langle\theta,e\rangle)\leq u
		\right\}.
		\]
		This is exactly ACW's hyperboloid convention: if
		\(X=(\sinh\rho\,\theta,\cosh\rho)\), then
		\(-X\cdot(e,1)=\cosh\rho-\sinh\rho\langle\theta,e\rangle\).
		
		For a single supporting horoball with support value \(R-\sigma\), put
		\(c=\langle\theta,e\rangle\).  Its radial boundary is determined by
		\[
		\cosh\rho-\sinh\rho\,c=e^{R-\sigma}.
		\]
		Solving the resulting quadratic in \(e^\rho\) gives, for \(c<1\),
		\[
		\rho_{R,\sigma}(\theta,e)
		=
		\log
		\frac{e^{R-\sigma}+\sqrt{e^{2(R-\sigma)}-(1-c^2)}}{1-c}.
		\]
		Thus the height relative to \(B_R(o)\) is
		\[
		H_{R,\sigma}(\theta,e)
		=
		\sigma+\log(1-c)
		-\log\!\left(1+\sqrt{1-e^{-2(R-\sigma)}(1-c^2)}\right).
		\]
		Let
		\[
		q=\sqrt{1-e^{-2(R-\sigma)}(1-c^2)}.
		\]
		On the active set \(H_{R,\sigma}\geq0\),
		\[
		1-c\geq e^{-\sigma}(1+q)\geq e^{-C}.
		\]
		Indeed, \(H_{R,\sigma}\ge0\) is equivalent to
		\[
		e^{\rho_{R,\sigma}}\le e^R,
		\]
		and the quadratic solution
		\[
		e^{\rho_{R,\sigma}}
		=
		\frac{e^{R-\sigma}(1+q)}{1-c}
		\]
		then gives the displayed lower bound for \(1-c\).
		Moreover \(R\geq C+1\) implies \(q\geq\sqrt{1-e^{-2}}\).  Using
		\[
		|\nabla_{\SSph^{n-1}}c|^2=1-c^2,
		\]
		one obtains the exact identity
		\[
		\frac{dH_{R,\sigma}}{dc}
		=
		-\frac{1}{1-c}
		-\frac{e^{-2(R-\sigma)}c}{q(1+q)}
		=
		-\frac{q+c}{(1-c^2)q},
		\qquad
		|\nabla_{\SSph^{n-1}}H_{R,\sigma}|
		=
		\frac{|q+c|}{q\sqrt{1-c^2}}.
		\]
		The apparent singularity at \(c=-1\) is removable because
		\[
		q^2-c^2=(1-c^2)(1-e^{-2(R-\sigma)}),
		\]
		or, equivalently,
		\[
		\frac{dH_{R,\sigma}}{dc}
		=
		-\frac{1-e^{-2(R-\sigma)}}{(q-c)q}
		\]
		where the right side has a finite limit as \(c\to-1\),
		while the active-set lower bound on \(1-c\) keeps the kernel away from the
		singular direction \(c=1\).  Equivalently, the differentiated formula gives
		the crude estimate
		\[
		|\nabla_{\SSph^{n-1}}H_{R,\sigma}|
		\leq
		\sqrt{\frac{1+c}{1-c}}
		+\frac{e^{-2(R-\sigma)}|c|\sqrt{1-c^2}}{q(1+q)}
		\leq
		\sqrt{\frac{1+c}{1-c}}
		+\frac{e^{-2(R-\sigma)}}{q(1+q)}
		\leq \sqrt{2}\,e^{C/2}+1.
		\]
		In the last step, the first term is bounded on the active set by
		\(1-c\ge e^{-C}\), which gives
		\((1+c)/(1-c)\le2e^C\).  The second term is bounded by \(1\) because
		\(R-\sigma\ge1\) and \(q\ge\sqrt{1-e^{-2}}\).
		This estimate is only needed on the active side of the truncation.  Indeed,
		the contrapositive of the active-set bound gives
		\[
		1-c<e^{-C}\quad\Longrightarrow\quad H_{R,\sigma}<0.
		\]
		Thus the possible large-gradient region near \(c=1\) is killed by the
		positive-part truncation.  On the open set \(\{H_{R,\sigma}>0\}\) the
		displayed gradient bound holds, while on \(\{H_{R,\sigma}<0\}\) the positive
		part is identically zero.  The preceding implication gives an open
		neighborhood of the only singular direction \(c=1\) on which the positive
		part is identically zero; on the complement \(H_{R,\sigma}\) is smooth away
		from the removable point \(c=-1\).  Hence the weak spherical gradient of
		\(H_{R,\sigma,+}\) is
		\[
		\nabla_{\SSph^{n-1}}H_{R,\sigma}\,
		\mathbf 1_{\{H_{R,\sigma}>0\}}
		\]
		and is bounded in \(L^\infty\) by \(L(C)\).  Equivalently,
		\(H_{R,\sigma,+}\in W^{1,\infty}(\SSph^{n-1})\) with weak gradient bounded
		by \(L(C)\), and integration along minimizing spherical geodesics gives the
		same global Lipschitz bound.  Equivalently, the standard
		absolutely-continuous-on-lines property for \(W^{1,\infty}\) functions on the
		sphere integrates the weak-gradient bound along geodesic arcs, and continuity
		fixes the representative.  Therefore
		\(H_{R,\sigma,+}=\max\{0,H_{R,\sigma}\}\) is globally \(L(C)\)-Lipschitz.
		
		By Lemma~\ref{lem:radial-representation}, the radial boundary is the infimum
		of the supporting horoball boundaries:
		\[
		\rho(\theta)=\inf_e \rho_{R,\sigma(e)}(\theta,e).
		\]
		Consequently
		\[
		h(\theta)=R-\rho(\theta)
		=
		\sup_e H_{R,\sigma(e)}(\theta,e)
		=
		\sup_e H_{R,\sigma(e),+}(\theta,e),
		\]
		where the last equality uses \(\Om\subset B_R(o)\).  A supremum of functions
		with a common Lipschitz constant has the same Lipschitz constant; no
		regularity of the index map \(e\mapsto\sigma(e)\), nor uniqueness of a
		maximizing supporting wall, is used.  This proves the lemma.
	\end{proof}
	
	Choose a positive smooth approximate identity \(K_j\) on \(SO(n)\) and set
	\[
	h_j(\theta)=\int_{SO(n)}h(A\theta)K_j(A)\,dA.
	\]
	Here \(dA\) is the normalized Haar measure on \(SO(n)\).
	Then \(h_j\in C^\infty(\SSph^{n-1})\), \(h_j\to h\) uniformly,
	\[
	0\leq h_j\leq 2\log 2,
	\qquad
	\Lip(h_j)\leq \Lip(h)\leq L(2\log 2).
	\]
	The Lipschitz contraction follows because rotations are isometries of the
	sphere.  Thus the original domain is approximated by smooth radial-height
	domains
	\[
	\Om_R(h_j)=\{0\leq \rho<R-h_j(\theta)\}
	\]
	with common bounds
	\[
	0\leq h_j\leq 2\log 2,\qquad \Lip(h_j)\leq L(2\log 2).
	\]
	
%	\section{Radial-Height Spectral Input}
%	\label{sec:radial-height-input}
%	
%	The all-dimensional analytic input was stated in
%	Section~\ref{sec:main-result} as Theorem~\ref{thm:sh-lip}.  The next section
%	uses that input,, to prove Theorem~\ref{thm:main}.  
%	Proposition~\ref{prop:low-dim-ball-comparison} records an independent
%	shortcut in dimensions \(n=2,3\), but it is not used in the final
%	all-dimensional assembly.
	
	\section{Proof of Theorem~\ref{thm:main}}
	
	In this section, we prove Theorem~\ref{thm:main} using Theorem~\ref{thm:sh-lip} together with the geometric reduction and the support-to-radial bridge. The proof
	of Theorem~\ref{thm:sh-lip} will be given in
	Section~\ref{sec:radial-height-proof}.
	
	For the horoconvex theorem, set
	\[
	C_*=2\log 2,\qquad L_*=L(2\log 2),\qquad
	R_n^{\rm main}=R_0(n,C_*,L_*).
	\]
	Enlarge \(R_n^{\rm main}\), if necessary, so that
	\[
	R_n^{\rm main}>\max\{2\log 2,C_*+1\}.
	\]
	Define \(c_n=c(n,C_*)\).  Thus the final theorem constants are the
	radial-height constants in all dimensions, and depend only on \(n\).
	Choose
	\[
	D_n=\max(2R_n^{\rm main},4\log 2).
	\]
	
	Let \(\Om\subset\HH^n\) be compact and horoconvex, with diameter
	\(D\ge D_n\).  Let \(R\) be its circumradius and \(o\) a circumcenter.  Since
	\(\Om\subset B_R(o)\), one has \(D\le 2R\), hence
	\[
	R\ge D/2\ge R_n^{\rm main}.
	\]
	Conversely, any point of \(\Om\) is the center of a ball of radius \(D\)
	containing \(\Om\), so the minimality of the circumradius gives \(R\le D\).
	Thus \(R^{-3}\ge D^{-3}\).  The inequality \(R\ge R_n^{\rm main}\) also makes
	the radial-height threshold available.
	By the geometric reduction,
	\[
	B_{R-2\log 2}(o)\subset \Om\subset B_R(o).
	\]
	Write the radial boundary as \(\rho(\theta)=R-h(\theta)\).  Lemmas
	\ref{lem:radial-representation} and \ref{lem:sr-lip} give
	\[
	0\le h\le C_*=2\log 2,\qquad \Lip(h)\le L_*=L(2\log 2).
	\]
	Because \(h\) is continuous, the interior of the compact domain is exactly
	the open radial set
	\[
	\Om_R(h)=\{0\le \rho<R-h(\theta)\}.
	\]
	Apply Theorem~\ref{thm:sh-lip} to the Lipschitz radial-height domain
	\(\Om=\Om_R(h)\) with \(C=C_*\) and \(L=L_*\).  Since \(R\ge R_n^{\rm main}\),
	\[
	\lambda_2(\Om)-\lambda_1(\Om)
	\geq c(n,C_*)R^{-3}.
	\]
	Since \(c_n=c(n,2\log2)\) and \(R\le D\), this gives
	\[
	\lambda_2(\Om)-\lambda_1(\Om)\ge c_nD^{-3}.
	\]
	
	\section{Proof of the Radial-Height Theorem}
	\label{sec:radial-height-proof}
	
	This section proves Theorem~\ref{thm:sh-lip}.  The argument first proves the
	estimate for smooth heights with constants depending only on \(n,C,L\) as
	specified below; Lemma~\ref{lem:offset-squeeze} then passes the estimate to
	Lipschitz heights by smooth spherical mollification.
	
	\paragraph{Notation guide for Section~\ref{sec:radial-height-proof}.}
	To reduce collisions in the long radial-height proof, we keep the following
	conventions fixed.  The capital operator \(D_S\) is the spherical degree
	operator and the lower-case \(D_s\) is the conjugated radial derivative.  The
	symbol \(\psi\) denotes the Euler digamma function only inside the spectral
	multiplier \(\phi_n\); radial eigenfunctions are written with indexed symbols
	such as \(\psi_k\) or \(\psi_{k,l,R}\).  The letter \(\rho\) denotes the polar
	radius in the geometric reduction, while \(\rho_k\) denotes finite-box
	frequency roots and \(\rho_{\rm res}\) is the high-mode reserve fraction.  The
	unadorned \(\epsilon\) appears only in the auxiliary truncation used to prove
	the radial branch gap, whereas the collar scale in the main form estimates is
	\(\eps_R=R^{-\delta_{\rm cut}}\).  The height bound is always the fixed
	constant \(C\); local inequalities may use new constants \(c_0,C_0,c_1,C_1\)
	when they would otherwise collide with that height bound.
	
	\subsection{The pullback and splitting}
	
	The proof of the smooth-height estimate in Theorem~\ref{thm:sh-lip} begins by
	pulling the radial graph
	\[
	\Om_R(h)=\{0\le \rho<R-h(\theta)\}
	\]
	back to the fixed cylinder \([0,R]\times\SSph^{n-1}\), after the usual radial
	Schrodinger conjugation.  The pullback uses a cutoff flattening
	\[
	r=F_h(s,\theta)=s-\beta_R(s)h(\theta),
	\]
	which is the identity near the singular endpoint, equals the boundary shift at
	\(s=R\), and keeps all forms on a fixed Hilbert space.  This pullback is a
	fixed-coordinate model of the same radial-height domain.
	
	In that fixed space, an arbitrary test vector is decomposed into
	\[
	w=J_R f_{\leq}+q_{\rm rad,\leq}+w_>.
	\]
	Here \(J_Rf_{\leq}\) is the finite angular part of the degree-adapted first
	radial branch, \(q_{\rm rad,\leq}\) is the finite angular radial complement,
	and \(w_>\) contains the high angular modes.  The rest of the proof controls
	these three pieces separately:
	\begin{center}
		\begin{tabular}{@{}p{0.17\textwidth}p{0.35\textwidth}p{0.38\textwidth}@{}}
			\textbf{piece} & \textbf{estimate} & \textbf{purpose}\\ \hline
			\(J_R f_{\leq}\) &
			finite first-branch Hadamard flux &
			produces \(T_n+2\pi^2h+b_n\)\\
			\(q_{\rm rad,\leq}\) &
			radial branch gap and projected Green/Jost bounds &
			keeps radial complements above the low block\\
			\(w_>\) &
			high angular reserve and error absorption &
			keeps high modes above the low block
		\end{tabular}
	\end{center}
	The final lower-form assembly combines these three controls and then compares
	the compressed low block with the effective angular operator.  The last step of
	the section is only a passage from smooth heights to Lipschitz heights by the
	offset-domain squeeze.
	
	The constants are fixed in the order
	\[
	\delta_{\rm eff}\to \eta_{\rm gap}\to N_A\to c_A\to \rho_{\rm res}
	\to c_{\rm rad}^0,R_{\rm rad}\to c_{\rm rad}\to \eta_{\rm form}
	\to \delta_{\rm cut}\to M_C\to N\to R_0.
	\]
	Here \(N_A=N_A(n)\) is the dimension-only floor after which the unperturbed
	centrifugal coefficient satisfies \(c_l\ge\lambda_l/2\), and
	\(c_A=c_A(n)=1/2\) is the structural high-angular comparison constant,
	\(c_{\rm rad}^0=c_{\rm rad}^0(n)\) and
	\(R_{\rm rad}=R_{\rm rad}(n)\) come from the one-dimensional radial branch
	gap, and \(c_{\rm rad}\le c_{\rm rad}^0\) denotes the post-reservation
	radial-complement margin used later.  The final \(R_0\) is enlarged past
	\(R_{\rm rad}\).  The constant \(M_C\) records the bounded low/high
	first-branch coupling and depends only on \(C\).  The Lipschitz constant \(L\)
	enters through the fixed
	high-mode cutoff \(N\) and the final threshold \(R_0\), not through the gap
	constant \(\delta_{\rm eff}(n,C)\).
	Two analytic inputs are especially important.  The angular effective gap is a
	compactness statement for \(T_n+2\pi^2h\) on the sphere, proved from positivity
	of a subordinated Poisson semigroup.  The radial branch gap is a
	one-dimensional separated estimate; its critical two-dimensional channel is
	kept inside the all-dimensional radial-height theorem used by
	Theorem~\ref{thm:main}.
	
	\subsection{Effective angular operator and constant choices}
	\label{sec:constants}
	
	We now fix the constants used in the smooth-height estimate.  The estimates
	below use only the height and its first angular derivatives.  Whenever a
	displayed coordinate expansion contains \(\Delta_{\mathbb S^{n-1}}h\), the
	term is integrated by parts on the sphere before constants are fixed.
	
	The constant choices in Theorem~\ref{thm:sh-lip} are made before the
	large-\(R\) limit.  We first define the limiting angular operator.  Let
	\(D_S\) be the spherical degree operator.  The subscript \(S\) refers to the
	sphere; this operator is not the lower-case radial derivative \(D_s\) used
	later in this section.  It is characterized by
	\[
	D_SY_l=lY_l,
	\qquad
	D_S=\sqrt{-\Delta_{\SSph^{n-1}}+\left(\frac{n-2}{2}\right)^2}
	-\frac{n-2}{2}.
	\]
	Thus \(D_S\) is the usual first-order self-adjoint pseudodifferential operator
	obtained from the spherical Laplacian by spectral calculus.  Now set
	\[
	T_n=\phi_n(D_S),\qquad
	\phi_n(x)=2\pi^2
	\left[
	\psi\!\left(\frac{n-1}{2}+x\right)
	-\psi\!\left(\frac{n-1}{2}\right)
	\right],
	\]
	where \(\psi\) is the digamma function.  Thus \(T_n\) is a self-adjoint
	spectral multiplier on the sphere.  The displayed formula is the definition:
	the diagonal action on spherical harmonics is the corresponding
	spectral-theorem description:
	\[
	T_nY_l=\tau_{n,l}Y_l,\qquad
	\tau_{n,l}=b_{n+2l}-b_n,
	\]
	where \(b_m\) is the \(R^{-3}\) coefficient in the large-radius ball
	first-eigenvalue expansion in \(\HH^m\).  The equality
	\(\tau_{n,l}=\phi_n(l)\) follows from the digamma recurrence; in even
	dimensions the \(-\log 2\) terms in the coefficients \(b_m\) cancel.  The
	proof below writes out this check explicitly.  The operator \(T_n\) is the
	\(R^{-3}\)-scale angular kinetic operator obtained from the first radial
	branches of the separated ball operators \(H_{l,R}\).  It is nonlocal in the
	same sense that a fractional power of the Laplacian is nonlocal, since
	functional calculus builds it from the spherical harmonic decomposition.  With
	this notation let
	\[
	L_h=T_n+2\pi^2h+b_n.
	\]
	The additive scalar \(b_n\) is kept
	so that the limiting operator records the correct absolute \(R^{-3}\)
	coefficient, but it cancels from all spectral gaps.  The proof below supplies
	\[
	\mu_2(L_h)-\mu_1(L_h)\geq \delta_{\rm eff}(n,C)>0
	\]
	uniformly for \(0\leq h\leq C\).  Then set
	\[
	\eta_{\rm gap}=\delta_{\rm eff}(n,C)/32.
	\]
	Choose once and for all a dimension-only high-mode floor \(N_A(n)\) so that
	the angular coefficient
	\[
	c_l=l(l+n-2)+\frac{(n-1)(n-3)}4
	\]
	satisfies \(c_l\ge \lambda_l/2\) for every \(l>N_A(n)\), where
	\(\lambda_l=l(l+n-2)\).  Then the high-angular reserve estimate below holds
	with the dimension-only constant
	\[
	c_A(n)=1/2.
	\]
	Choose a reserve fraction
	\[
	\rho_{\rm res}\in(0,1/4)
	\]
	for the high-angular separated form before choosing the form-loss parameter.
	Let \(c_{\rm rad}^0(n)>0\) and \(R_{\rm rad}(n)<\infty\) be constants
	supplied by Lemma~\ref{lem:radial-branch-gap}.  These are fixed at this
	stage; later \(R_0\) is enlarged so that \(R_0\ge R_{\rm rad}(n)\).
	We reserve a fixed fraction of the radial-complement form for the
	first-branch/radial-complement mixed terms in
	Lemma~\ref{lem:first-radial-complement-mixed}.  After this reservation choose
	a smaller constant
	\[
	0<c_{\rm rad}(n)\le c_{\rm rad}^0(n)
	\]
	for the post-reservation radial-complement margin.  Every later displayed
	\(c_{\rm rad}R^{-2}\|q_{\rm rad}\|^2\) uses this smaller constant.  The
	parameter \(\eta\) in
	Lemma~\ref{lem:first-radial-complement-mixed} is fixed before
	\(\delta_{\rm cut}\) and \(N\) are chosen.
	Choose \(\eta_{\rm form}=\eta_{\rm form}(n,C,\rho_{\rm res})\) so the
	scale-invariant form losses leave at least half of the radial-complement
	margin.  The finite-low costs produced by these routings are
	\(o(R^{-3})\) after the final \(R_0\) enlargement.  The only fixed
	order-\(R^{-3}\) loss kept on the low first-branch block below is the
	\(\eta_{\rm gap}\) Young loss from the bounded low/high first-branch
	coupling.
	Choose the transition exponent
	\[
	\delta_{\rm cut}\in(1/2,2/3),\qquad
	\eps_R=R^{-\delta_{\rm cut}},
	\]
	where \(\delta_{\rm cut}<1\), and hence the stricter upper bound
	\(\delta_{\rm cut}<2/3\), gives
	\(R^{-1}\eps_R^{-1}=R^{\delta_{\rm cut}-1}=o(1)\) in the endpoint mass
	controls.  The lower bound fixes the endpoint collar on the small scale used
	in the cutoff estimates below, and all local remainders may depend on this
	already-chosen \(\delta_{\rm cut}\).
	Let \(M_C\) denote a fixed constant in the bounded low/high
	first-branch cross estimate in Lemma~\ref{lem:all-degree-lower-form-assembly}.
	For definiteness one may take
	\[
	M_C=4\pi^2 C+1 .
	\]
	This depends only on the height bound \(C\).  Since
	\(\|2\pi^2M_h\|_{L^2\to L^2}\le2\pi^2C\) and
	\(M_C=4\pi^2C+1>2\pi^2C\), this constant dominates the leading
	order-\(R^{-3}\) bounded multiplication part of the low/high first-branch
	coupling with no large-\(R\) enlargement.  The non-leading low/high
	corrections are routed separately into
	\(\tau'_{N,R}\operatorname{Sep}_>\) and the \(o_{N,C,L}(R^{-3})\)
	remainder in Lemma~\ref{lem:all-degree-lower-form-assembly}:
	\[
	|\operatorname{Coupl}_{\le,>}(f_{\leq},w_>)|
	\le M_CR^{-3}\|f_{\leq}\|\,\|w_>\|.
	\]
	Then choose the angular cutoff
	\[
	N=N(n,C,L,\eta_{\rm gap},\eta_{\rm form},\rho_{\rm res},M_C)\ge N_A(n),
	\]
	large enough to satisfy the high-mode reserve and compression requirements
	below, and also large enough that the finite-block ground-state energy
	approximates the full effective ground-state energy uniformly in
	\(0\le h\le C\):
	\[
	\mu_1(P_NL_hP_N|_{P_NL^2})
	\le \mu_1(L_h)+\delta_{\rm eff}/32 .
	\]
	This last uniform approximation follows from a uniform truncation estimate
	for the ground states of \(T_n+2\pi^2h\).  Let \(u_h\) be a normalized ground
	state of \(L_h\).  The min-max estimate with the constant test function gives
	\(\langle u_h,T_nu_h\rangle\le 2\pi^2C\), so the spectral theorem for \(T_n\)
	gives
	\[
	\|(I-P_N)u_h\|_2^2
	\le 2\pi^2C\,\tau_{n,N+1}^{-1}
	=:\kappa_N^2,\qquad \kappa_N\to0
	\]
	uniformly for \(0\le h\le C\).  Write \(g=P_Nu_h\).  Then
	\(\|g\|_2^2\ge1-\kappa_N^2\), and truncation does not increase the kinetic
	term:
	\[
	\langle g,T_ng\rangle\le \langle u_h,T_nu_h\rangle .
	\]
	For the potential term, \(0\le2\pi^2h\le2\pi^2C\) gives
	\[
	\bigl|2\pi^2\langle g,hg\rangle
	-2\pi^2\langle u_h,hu_h\rangle\bigr|
	\le 2\pi^2C(2\|(I-P_N)u_h\|_2+\|(I-P_N)u_h\|_2^2)
	\le 6\pi^2C\,\kappa_N
	\]
	after increasing \(N\) so \(\kappa_N\le1\).  Hence
	\[
	\langle g,L_hg\rangle
	\le \mu_1(L_h)+6\pi^2C\,\kappa_N,
	\]
	and therefore, using \(\|g\|_2^2\ge1-\kappa_N^2\),
	\[
	\mu_1(P_NL_hP_N|_{P_NL^2})-\mu_1(L_h)
	\le
	{6\pi^2C\,\kappa_N+|\mu_1(L_h)|\kappa_N^2
		\over 1-\kappa_N^2}.
	\]
	Moreover \(b_n\le\mu_1(L_h)\le b_n+2\pi^2C\), so \(\mu_1(L_h)\) is uniformly
	bounded for \(0\le h\le C\).  Choosing \(N\) large enough makes the last
	display at most \(\delta_{\rm eff}/32\), uniformly in \(0\le h\le C\).
	Finally
	\[
	R_0=R_0(n,C,L,\rho_{\rm res},\eta_{\rm form},\delta_{\rm cut},N).
	\]
	This threshold is also enlarged past the dimension-only radial threshold
	\(R_{\rm rad}(n)\).  After the previous choices are fixed, write it as
	\(R_0(n,C,L)\).  The final radial-height constant may be taken as
	\[
	c(n,C)=\delta_{\rm eff}(n,C)/4.
	\]
	This is the point where the \(L\)-dependence has disappeared from the
	constant.  The Lipschitz bound affects the proof only by enlarging the finite
	cutoff \(N\), the constants in the shear/Jacobian and low/high routing
	estimates, and the final threshold needed to absorb the corresponding
	\(o_{N,C,L}(R^{-3})\) remainders.  It does not enter the angular compactness
	gap \(\delta_{\rm eff}(n,C)\), the dimension-only radial branch constants, or
	the final value of \(c(n,C)\).
	Explicitly, after \(N\) and all earlier parameters are fixed, each occurrence
	of \(o_{N,C,L}(R^{-3})\) below means
	\[
	\epsilon_{N,C,L}(R)R^{-3},\qquad
	\epsilon_{N,C,L}(R)\to0
	\]
	as \(R\to\infty\), with \(N,C,L\) held fixed.  The final threshold
	\(R_0(n,C,L)\) is enlarged so that the finitely many such remainders appearing
	in the assembly are smaller than the displayed reserved fractions of
	\(\delta_{\rm eff}(n,C)R^{-3}\).  This enlargement changes only the
	threshold, not \(c(n,C)\).
	When a local lemma records the already-fixed collar exponent explicitly as
	\(o_{N,C,L,\delta_{\rm cut}}(R^{-3})\), this is absorbed into the aggregate
	notation \(o_{N,C,L}(R^{-3})\) below after \(\delta_{\rm cut}\) has been
	chosen.
	
	\subsection{Coordinates and the Degree-Adapted First Branch}
	
	For the unperturbed separated model, write the radial variable as \(s\).
	Start from the hyperbolic metric
	\[
	g=ds^2+\sinh^2(s)\gamma_{AB}d\theta^A d\theta^B
	\]
	and perform the physical radial Schrodinger conjugation: multiplication by
	\((\sinh s)^{(n-1)/2}\) carries the physical radial \(L^2\) measure
	\(\sinh^{n-1}s\,ds\) to the flat measure \(ds\).  With
	\[
	a_n=(n-1)/2,\qquad c_n^0=(n-1)(n-3)/4,
	\]
	set
	\[
	E_R=a_n^2+\pi^2/R^2.
	\]
	The separated radial operator in angular degree \(l\) is
	\[
	H_{l,R}
	=
	-\frac{d^2}{ds^2}+a_n^2+
	\bigl(l(l+n-2)+c_n^0\bigr)\sinh^{-2}(s),
	\]
	with the Dirichlet condition at \(s=R\) and the Friedrichs endpoint condition
	at \(s=0\).  Let \(y_{l,R}\) be the normalized positive first eigenfunction
	of \(H_{l,R}\).  For
	\[
	f(\theta)=\sum_{l,m}f_{l,m}Y_{l,m}(\theta),
	\]
	define the degree-adapted first-radial-branch lift
	\[
	J_R f=\sum_{l,m}f_{l,m}y_{l,R}(s)Y_{l,m}(\theta).
	\]
	The normalization is the \(L^2(0,R;ds)\) normalization after the physical
	radial Schrodinger conjugation, and the spherical harmonics are orthonormal in
	\(L^2(\mathbb S^{n-1})\).  Hence \(J_R\) is an exact isometry from
	\(L^2(\mathbb S^{n-1})\) into the fixed flattened Hilbert space:
	\[
	\|J_Rf\|_{L^2(ds\,d\theta)}=\|f\|_{L^2(\mathbb S^{n-1})}.
	\]
	All shifted forms below are written in this unitarily pulled-back Hilbert
	space; in physical variables this is the same equality after undoing the
	unitary radial conjugation.
	The lift is defined in the flattened coordinate \(s\); hence
	\(y_{l,R}(R)=0\) imposes the Dirichlet condition on the graph boundary after
	the pullback.
	This degree adaptation is essential: it produces the multiplier \(T_n\), not
	the ordinary angular Laplacian.
	In the later boundary-flattening calculation, \(r\) denotes the physical
	radial coordinate and \(s\in[0,R]\) denotes the fixed flattened coordinate,
	with \(r=F_h(s,\theta)\).
	
	\subsection{Boundary Flattening Tools}
	\label{sec:boundary-flattening-tools}
	
	These two estimates are the local pullback tools used later to route the
	shear and angular-Jacobian coefficient classes.  They are stated before the
	finite-mode analysis because they depend only on the flattening map, the
	height bounds, and the first angular derivatives of the height, not on the
	later radial branch or finite-block estimates.  We use the cutoff field
	\[
	F_h(s,\theta)=s-\beta_R(s)h(\theta),
	\]
	with \(\beta_R=0\) near the singular endpoint, \(\beta_R=s/R\) on the
	regular bulk, \(\beta_R(R)=1\), and
	\[
	\beta_R'=O(R^{-1}),\qquad
	\beta_R''=O(R^{-1}\eps_R^{-1})
	\]
	on the transition layer \(\eps_R\le s\le2\eps_R\).  The same field is
	used in the coefficient-class bookkeeping below.
	
	\begin{lemma}[Flattened shear form]
		\label{lem:flattened-shear-form}
		Let \(z\) be a smooth test function on the physical graph domain and set
		\(\widetilde z(s,\theta)=z(F_h(s,\theta),\theta)\), where
		\[
		F_h(s,\theta)=s-\beta_R(s)h(\theta).
		\]
		Assume \(|\beta_R'|\le C_\beta/R\), so that
		\(F_s=\partial_sF_h\ge1/2\) on the full cylinder once \(R\) is large in
		terms of \(C\) and \(C_\beta\).  Then the physical Dirichlet form pulls back as
		\[
		\begin{aligned}
			\int |\nabla z|^2\sinh^{n-1}r\,dr\,d\theta
			&=
			\int F_s^{-1}\sinh^{n-1}(F_h)|\partial_s\widetilde z|^2\,ds\,d\theta  \\
			&\quad+
			\int F_s\sinh^{n-3}(F_h)
			\left|\nabla_\theta\widetilde z
			+\frac{\beta_R}{F_s}\partial_s\widetilde z\,\nabla h\right|^2
			ds\,d\theta .
		\end{aligned}
		\]
		Consequently, if \(0\le h\le C\), \(\Lip(h)\le L\), and \(R\) is large
		enough in terms of \(C\), then for every \(\eta>0\)
		\[
		\begin{aligned}
			&\left|
			\int 2\beta_R\sinh^{n-3}(F_h)\,
			\partial_s\widetilde z\,\nabla h\cdot\nabla_\theta\widetilde z
			\,ds\,d\theta\right|  \\
			&\qquad\le
			\eta\int F_s^{-1}\sinh^{n-1}(F_h)|\partial_s\widetilde z|^2
			+C_{C,L}\eta^{-1}R^{-2}
			\int F_s\sinh^{n-3}(F_h)|\nabla_\theta\widetilde z|^2 .
		\end{aligned}
		\]
		The principal shear loss is therefore a small radial or angular kinetic loss,
		depending on the Young routing.  It is not a scalar
		\(\sinh^{-2}(F_h)|\widetilde z|^2\) term.  Weighted scalar terms enter only
		from lower-order Jacobian and radial half-density coefficients after the
		fixed Schrodinger conjugation.
	\end{lemma}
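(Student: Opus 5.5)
The plan is a direct change-of-variables computation followed by Young's inequality, using the diffeomorphism \(\Phi(s,\theta)=(F_h(s,\theta),\theta)\) of the cylinder \([0,R]\times\SSph^{n-1}\) onto the graph domain.

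\emph{Invertibility.} Since \(F_s=1-\beta_R'(s)h(\theta)\ge 1-C C_\beta/R\ge1/2\) once \(R\ge 2CC_\beta\), the map \(\Phi\) is a diffeomorphism. It is the identity near \(s=0\) because \(\beta_R=0\) there, and it sends \(s=R\) to \(r=R-h(\theta)\).

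\emph{Chain rule.} For \(\widetilde z=z\circ\Phi\) we have
\[
\partial_s\widetilde z=F_s\,(\partial_r z)\circ\Phi,\qquad
\nabla_\theta\widetilde z=(\nabla_\theta z)\circ\Phi+(\partial_r z)\circ\Phi\,\nabla_\theta F_h .
\]
Because \(\nabla_\theta F_h=-\beta_R\nabla h\), this gives
\[
(\partial_r z)\circ\Phi=F_s^{-1}\partial_s\widetilde z,\qquad
(\nabla_\theta z)\circ\Phi=\nabla_\theta\widetilde z+\frac{\beta_R}{F_s}\partial_s\widetilde z\,\nabla h .
\]

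\emph{Pulling back the form.} The physical integrand is
\[
\bigl(|\partial_r z|^2+\sinh^{-2}r\,|\nabla_\theta z|^2_\gamma\bigr)\sinh^{n-1}r\,dr\,d\theta .
\]
Substituting \(r=F_h\) and \(dr\,d\theta=F_s\,ds\,d\theta\) gives the radial term \(F_s\cdot F_s^{-2}|\partial_s\widetilde z|^2\sinh^{n-1}F_h=F_s^{-1}\sinh^{n-1}(F_h)|\partial_s\widetilde z|^2\). The angular term becomes \(F_s\sinh^{n-3}(F_h)\) times the square of the corrected gradient. This is exactly the displayed identity. The boundary is fixed by the Dirichlet condition, so no boundary terms arise.

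\emph{The Young estimate.} Expanding the square produces the cross term \(2F_s\sinh^{n-3}(F_h)(\beta_R/F_s)\partial_s\widetilde z\,\nabla h\cdot\nabla_\theta\widetilde z\), which is the quantity to bound. Split the weight as
\[
\sinh^{n-3}(F_h)=\bigl[F_s^{-1/2}\sinh^{(n-1)/2}(F_h)\bigr]\bigl[F_s^{1/2}\sinh^{(n-3)/2}(F_h)\bigr]\cdot\frac{1}{\sinh F_h}.
\]
Then Young's inequality with parameter \(\eta\) bounds the cross term by
\[
\eta\int F_s^{-1}\sinh^{n-1}(F_h)|\partial_s\widetilde z|^2
+\eta^{-1}\int \frac{\beta_R^2|\nabla h|^2}{\sinh^2F_h}\,F_s\sinh^{n-3}(F_h)|\nabla_\theta\widetilde z|^2 .
\]
We need the pointwise bound
\[
\frac{\beta_R^2 L^2}{\sinh^2F_h}\le C_{C,L}R^{-2}.
\]
Where \(\beta_R=0\) this is trivial. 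On the support of \(\beta_R\) we have \(s\ge\eps_R\) and \(\beta_R\le C's/R\), since \(\beta_R=s/R\) on the bulk and \(\beta_R\) is bounded by a multiple of \(s/R\) on the transition layer. Also \(F_h\ge s-\beta_R C\ge s/2\) for large \(R\). Hence \(\beta_R/\sinh F_h\lesssim (s/R)/\sinh(s/2)\lesssim R^{-1}\), uniformly in \(s\), because \(s/\sinh(s/2)\) is bounded.

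\emph{Main obstacle.} The only delicate point is this last uniform bound near the transition layer. The cutoff must satisfy \(\beta_R\lesssim s/R\) rather than merely \(|\beta_R'|\lesssim R^{-1}\). The latter already implies the former, since \(\beta_R(0)=0\) and integrating \(\beta_R'\) from \(0\) gives \(\beta_R(s)\le C_\beta s/R\). So the estimate closes with \(C_{C,L}\) proportional to \(L^2C_\beta^2\).
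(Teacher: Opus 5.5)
Your proposal is correct and follows essentially the same route as the paper: the chain rule with the Jacobian $F_s$ gives the pulled-back identity, and the pointwise bound $\beta_R|\nabla h|/\sinh(F_h)\le C_{C,L}R^{-1}$ (from $\beta_R\lesssim s/R$ and $F_h\ge s/2$) combined with Young's inequality gives the asymmetric estimate. Two small differences are worth noting: you justify $\beta_R\le C_\beta s/R$ by integrating $\beta_R'$ from $\beta_R(0)=0$, which is slightly more careful than the paper's bare assertion $\beta_R\le s/R$; and your pointwise Young inequality with the split weight is equivalent to the paper's Cauchy--Schwarz followed by Young with parameter $\eta R/C_{C,L}$.
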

	
	\begin{proof}
		The chain rule gives
		\[
		\partial_r z=F_s^{-1}\partial_s\widetilde z,\qquad
		\nabla_\theta z=\nabla_\theta\widetilde z
		+\frac{\beta_R}{F_s}\partial_s\widetilde z\,\nabla h .
		\]
		Multiplying the radial and angular gradient terms by the pulled-back volume
		element \(F_s\sinh^{n-1}(F_h)\,ds\,d\theta\) gives the displayed identity.
		Since \(\beta_R=0\) near the singular endpoint and
		\(\beta_R\le s/R\) elsewhere, while \(F_h\ge s/2\) after increasing \(R\),
		\[
		\frac{\beta_R|\nabla h|}{\sinh(F_h)}\le \frac{C_{C,L}}{R}.
		\]
		Thus the mixed integral is bounded by
		\[
		\frac{C_{C,L}}{R}
		\left(\int F_s^{-1}\sinh^{n-1}(F_h)|\partial_s\widetilde z|^2\right)^{1/2}
		\left(\int F_s\sinh^{n-3}(F_h)|\nabla_\theta\widetilde z|^2\right)^{1/2},
		\]
		and Young's inequality with parameter \(t=\eta R/C_{C,L}\), followed by
		renaming the constant, proves the stated asymmetric estimate.
	\end{proof}
	
	\begin{lemma}[Angular Jacobian integration by parts]
		\label{lem:angular-jacobian-ibp}
		Let \(h_j\) be smooth heights with \(0\le h_j\le C\) and
		\(|\nabla h_j|\le L\), converging uniformly to a Lipschitz height \(h\).
		Set \(F_j=s-\beta_R(s)h_j(\theta)\).  Suppose an angular Jacobian or
		half-density expansion produces a scalar term of the form
		\[
		I_j[w]=
		\int b_j(s,\theta)w^2\,\Delta_{\mathbb S^{n-1}}h_j\,ds\,d\theta,
		\qquad
		b_j=\beta_R(s)\coth(F_j)\sinh^{-2}(F_j),
		\]
		or the same expression with \(b_j\) multiplied by a smooth coefficient of
		\(F_j\) whose value and first derivative are uniformly bounded.  Define
		\[
		\alpha_j(s,\theta)
		=
		\beta_R(s)^2
		\bigl(\coth^2(F_j)+\operatorname{csch}^2(F_j)\bigr).
		\]
		Then \(\alpha_j\le C_C\), and \(\alpha_j\le C_CR^{-2}\) on \(s\le1\).  For
		every \(\eta>0\),
		\[
		|I_j[w]|
		\le
		\eta\int \sinh^{-2}(F_j)|\nabla_\theta w|^2\,ds\,d\theta
		+C_{C,L}\eta^{-1}
		\int \alpha_j\sinh^{-2}(F_j)|w|^2\,ds\,d\theta ,
		\]
		with constants independent of \(j\) and \(R\) large.
	\end{lemma}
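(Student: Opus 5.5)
The estimate is an integration by parts on the sphere, moving the Laplacian off \(h_j\). This trades the second derivatives of \(h_j\), which are not uniformly controlled, for one derivative of \(h_j\), bounded by \(L\), and one derivative of \(b_jw^2\). For each fixed \(s\) we integrate by parts on \(\SSph^{n-1}\) (no boundary):
\[
I_j[w]=-\int \nabla_\theta\bigl(b_j w^2\bigr)\cdot\nabla h_j\,ds\,d\theta
=-\int \bigl(2b_j w\nabla_\theta w+w^2\nabla_\theta b_j\bigr)\cdot\nabla h_j\,ds\,d\theta .
\]
Since \(b_j\) depends on \(\theta\) only through \(F_j\), we have \(\nabla_\theta b_j=\partial_F b_j\cdot(-\beta_R\nabla h_j)\). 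Differentiating \(\coth F\operatorname{csch}^2F\) gives
\[
\partial_F b_j=-\beta_R\operatorname{csch}^2F_j\bigl(\operatorname{csch}^2F_j+2\coth^2F_j\bigr).
\]
Hence \(|\nabla_\theta b_j|\le 2\beta_R^2L\operatorname{csch}^2F_j(\coth^2F_j+\operatorname{csch}^2F_j)=2L\alpha_j\sinh^{-2}F_j\). For a smooth bounded multiplier \(m(F_j)\), the extra term \(m'(F_j)\beta_R\nabla h_j\, b_j\) is controlled the same way, since \(\beta_R|b_j|\le \sqrt{\alpha_j}\,\beta_R\coth F_j\sinh^{-2}F_j\cdot(\ldots)\) is dominated by \(\alpha_j\sinh^{-2}F_j\) up to constants.

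The second term is therefore at most \(2L^2\int\alpha_j\sinh^{-2}(F_j)w^2\), and the constant \(C_{C,L}\eta^{-1}\) absorbs it. This uses \(\eta\le\) some fixed constant. For large \(\eta\) the first-derivative term can be dropped entirely by a Cauchy–Schwarz split, so I will assume WLOG \(\eta\le1\) and note that the case \(\eta>1\) follows by monotonicity after adjusting the constant. For the first term, note that \(|b_j|=\beta_R\coth F_j\operatorname{csch}^2F_j\le \sqrt{\alpha_j}\,\sinh^{-2}F_j\). Young's inequality then gives
\[
2L\!\int |b_j||w||\nabla_\theta w|
\le \eta\!\int\sinh^{-2}F_j|\nabla_\theta w|^2+L^2\eta^{-1}\!\int\alpha_j\sinh^{-2}F_j\,w^2 .
\]
Summing the two bounds gives the claimed inequality, with a constant depending only on \(L\) (and on \(C\) through the bounded multiplier).

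It remains to check the pointwise claims on \(\alpha_j\). Here I will use the flattening properties from Section~\ref{sec:boundary-flattening-tools}: \(\beta_R=0\) on \(s\le\eps_R\), \(0\le\beta_R\le s/R\) (with \(\beta_R\le 1\), \(\beta_R=s/R\) in the bulk), and \(F_j\ge s/2\) for large \(R\), as in Lemma~\ref{lem:flattened-shear-form}. On the support of \(\beta_R\), both \(\beta_R\coth F_j\) and \(\beta_R\operatorname{csch}F_j\) are bounded by \((s/R)(1+2/s)\), which is \(\le C_C\). On \(s\le1\) this is \(\le (s+2)/R\le 3/R\), so \(\alpha_j\le C_CR^{-2}\) there. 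For \(s\ge1\), \(\coth F_j\) is bounded once \(F_j\ge1/2\) and \(\beta_R\le1\), which gives \(\alpha_j\le C_C\). All these bounds use only \(0\le h_j\le C\) and \(|\nabla h_j|\le L\), so they are uniform in \(j\) and in large \(R\).

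I expect the main obstacle to be the bookkeeping of the derivative of \(b_j\) (and of the allowed smooth multipliers): one must show it is dominated by \(\alpha_j\sinh^{-2}F_j\), including near the transition layer \(\eps_R\le s\le 2\eps_R\), where \(F_j\) is small and \(\coth F_j\) is large. The factor \(\beta_R^2\) built into \(\alpha_j\), together with \(\beta_R\lesssim s/R\) and \(F_j\gtrsim s\), is exactly what compensates there. The integration by parts itself is legitimate since \(h_j\) is smooth and, for each \(s\), the integrand is smooth on the closed sphere.
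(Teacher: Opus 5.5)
Your argument follows the paper's proof: integrate by parts on \(\SSph^{n-1}\), apply Young to the piece containing \(\nabla_\theta w\) using \(\beta_R\coth F_j\le\sqrt{\alpha_j}\), bound \(|\nabla_\theta b_j|\le 2L\alpha_j\sinh^{-2}F_j\) pointwise for the other piece, and read off the bounds on \(\alpha_j\) from \(\beta_R\le s/R\) and \(F_j\ge s/2\). Those computations are correct. Your justification for the smooth-multiplier version is garbled, though. The clean reason is that \(\beta_R^2\coth F_j\operatorname{csch}^2F_j\le\alpha_j\sinh^{-2}F_j\), because \(\coth F_j\ge 1\).

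The step that fails is reducing \(\eta>1\) to \(\eta\le 1\) ``by monotonicity after adjusting the constant''. The right side \(\eta A+C\eta^{-1}B\) is not increasing in \(\eta\). No choice of constant can rescue it, because the inequality is false for large \(\eta\). Take \(w=w(s)\) independent of \(\theta\), supported where \(\beta_R>0\), and \(h_j\) nonconstant. Then the \(\nabla_\theta w\) term vanishes and
\[
I_j[w]=-\int w^2\,\beta_R^2\operatorname{csch}^2F_j\bigl(\operatorname{csch}^2F_j+2\coth^2F_j\bigr)|\nabla h_j|^2\,ds\,d\theta\neq 0,
\]
while the right side equals \(C_{C,L}\eta^{-1}\int\alpha_j\sinh^{-2}(F_j)\,w^2\), which tends to \(0\) as \(\eta\to\infty\).

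So what you have proved is the estimate for \(0<\eta\le 1\), or equivalently for all \(\eta>0\) with \(C_{C,L}(1+\eta^{-1})\) in place of \(C_{C,L}\eta^{-1}\). The paper's proof proves the same thing: its \(\nabla_\theta b_j\) term produces a scalar remainder with constant \(C_{C,L}\), not \(C_{C,L}\eta^{-1}\), and it passes over this silently. The restricted form is all the later routing needs, since the lemma is only applied with small \(\eta\). You should state that restriction explicitly rather than claim the large-\(\eta\) case.
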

	
	\begin{proof}
		Integrate by parts on the sphere.  Since
		\(\Delta_{\mathbb S^{n-1}}h_j=\operatorname{div}\nabla h_j\),
		\[
		I_j[w]=
		-\int \nabla h_j\cdot\nabla_\theta(b_jw^2)\,ds\,d\theta .
		\]
		The derivative falling on \(w^2\) is bounded by
		\[
		C_L\int \beta_R\coth(F_j)\sinh^{-2}(F_j)
		|w|\,|\nabla_\theta w|\,ds\,d\theta .
		\]
		Because \(\beta_R=0\) near \(s=0\), \(\beta_R\le s/R\), and
		\(F_j\ge s/2\) after increasing \(R\), the factor
		\((\beta_R\coth(F_j))^2\) is bounded by \(\alpha_j\).  Young's inequality
		gives the stated angular kinetic loss and the
		\(\alpha_j\sinh^{-2}(F_j)|w|^2\) scalar remainder.
		
		When the derivative falls on \(b_j\), the identity
		\[
		\nabla_\theta\{\coth(F_j)\sinh^{-2}(F_j)\}
		=
		O\!\left((\operatorname{csch}^2(F_j)+\coth^2(F_j))\sinh^{-2}(F_j)\right)
		\beta_R\nabla h_j
		\]
		and \(|\nabla h_j|\le L\) give
		\[
		|\nabla_\theta b_j|
		\le C_{C,L}\alpha_j\sinh^{-2}(F_j),
		\]
		again using
		\(\alpha_j=\beta_R^2(\coth^2(F_j)+\operatorname{csch}^2(F_j))\).
		This proves the scalar bound.  The version with a bounded smooth coefficient
		has the same proof: when the derivative falls on the coefficient, it supplies
		one factor \(\beta_R|\nabla h_j|\), while \(b_j\) already supplies the factor
		\(\beta_R\coth(F_j)\sinh^{-2}(F_j)\), and the product is again controlled by
		\(\alpha_j\sinh^{-2}(F_j)\).  Finally, \(\alpha_j\le C_C\) everywhere, while
		on \(s\le1\) the estimate \(\beta_R\le s/R\) and
		\(\coth^2(F_j)+\operatorname{csch}^2(F_j)\le C/s^2\) give
		\(\alpha_j\le C_CR^{-2}\).
	\end{proof}
	
	\paragraph{Ball asymptotics and the dimension shift.}
	These give the first-radial-branch coefficients.  In angular degree \(l\),
	the singular coefficient is
	\[
	l(l+n-2)+\frac{(n-1)(n-3)}4
	=
	\frac{(n+2l-1)(n+2l-3)}4,
	\]
	which is exactly the radial first-eigenvalue coefficient in dimension
	\(m=n+2l\).  This matches the full coefficient, not only the inner factor in
	parentheses.  Thus the tempting shift obtained by matching only
	\(l(l+n-2)\) is wrong; for \(l=1\) the admissible shift is \(n+2\), not
	\(n+4\).  If \(e_{l,R}^{(n)}\) is the first radial eigenvalue in degree \(l\)
	for the dimension-\(n\) ball, then
	\[
	e_{l,R}^{(n)}
	=
	\lambda_1^{(n+2l)}(R)
	-\bigl(a_{n+2l}^2-a_n^2\bigr).
	\]
	Krist\'aly's large-radius first-eigenvalue expansion
	\cite[Theorem 1.1, Remark 1.1(i), and the root asymptotics derived in
	equations (3.13), (3.23)]{Kristaly} therefore gives, for each fixed \(l\),
	\[
	e_{l,R}^{(n)}
	=
	a_n^2+\pi^2/R^2+b_{n+2l}R^{-3}+o_l(R^{-3}).
	\]
	The appeal to Remark~1.1(i) is used here with the following precision: after
	the fixed dimension shift \(m=n+2l\), the first four terms of the ball-root
	expansion leave a genuine little-\(o_m(R^{-3})\) remainder, not merely a
	bounded \(O_m(R^{-3})\) error.  This is the precision needed later when a
	finite set of degrees is maximized after \(N\) has been fixed.
	Explicitly, for odd \(m=2p+1\),
	\[
	b_m=2\pi^2\sum_{j=1}^{p-1}\frac1j
	\]
	with the empty sum equal to \(0\), while for even \(m=2p\),
	\[
	b_m=4\pi^2
	\left(\sum_{j=1}^{p-1}\frac{1}{2j-1}-\log 2\right).
	\]
	These are just the \(R^{-3}\) coefficients obtained by expanding the
	squares in Krist\'aly's odd and even asymptotic formulas; the case \(m=2\)
	uses the stated empty-sum convention.
	Only finitely many degrees \(l\le N\) enter the lower-form reduction, and
	\(N\) is fixed before \(R_0\); hence the maximum of these fixed-degree
	remainders is still \(o(R^{-3})\).  The diagonal operator is then defined by
	the exact coefficient identity \(T_nY_l=(b_{n+2l}-b_n)Y_l\).
	Combining Krist\'aly's odd and even coefficient formulas gives the algebraic
	digamma form used below, where
	\(\psi(x)=\Gamma'(x)/\Gamma(x)\) denotes the Euler digamma function,
	\[
	b_{n+2l}-b_n
	=2\pi^2
	\left[
	\psi\!\left(\frac{n-1}{2}+l\right)
	-\psi\!\left(\frac{n-1}{2}\right)
	\right].
	\]
	Indeed, if \(n=2p+1\) is odd then
	\[
	b_{n+2l}-b_n
	=
	2\pi^2\sum_{j=p}^{p+l-1}{1\over j}
	=
	2\pi^2[\psi(p+l)-\psi(p)].
	\]
	If \(n=2p\) is even, the \(-\log 2\) terms cancel and
	\[
	b_{n+2l}-b_n
	=
	4\pi^2\sum_{j=p}^{p+l-1}{1\over 2j-1}
	=
	2\pi^2
	\left[\psi\!\left(p+l-\frac12\right)
	-\psi\!\left(p-\frac12\right)\right].
	\]
	These are the displayed formula with \((n-1)/2=p\) in the odd case and
	\((n-1)/2=p-\frac12\) in the even case.
	In particular, the \(l=1\) coefficient is
	\[
	b_{n+2}-b_n
	=2\pi^2\left(\frac{2}{n-1}\right)
	=\frac{4\pi^2}{n-1},
	\]
	using \(\psi(x+1)-\psi(x)=1/x\), matching the ball gap asymptotic.
	Benguria--Linde
	\cite[Section 3, Lemma 3.1]{BenguriaLinde} identifies the second Dirichlet
	eigenvalue of a hyperbolic ball with the first \(l=1\) branch.  Equivalently,
	for the large-\(R\) calibration needed here, the radial branch gap proved
	below puts the second \(l=0\) radial level at distance at least \(cR^{-2}\)
	above the ground state, for some \(c>0\), while the first \(l=1\) level has gap
	\(4\pi^2((n-1)R^3)^{-1}+o(R^{-3})\).  The \(R^{-3}\) coefficients above come
	from Krist\'aly through the dimension shift.
	
	\paragraph{Angular effective gap.}
	\begin{lemma}[Angular effective gap]
		\label{lem:angular-effective-gap}
		Fix \(n\ge2\) and \(C<\infty\).  There is a constant
		\(\delta_{\rm eff}(n,C)>0\) such that, for every measurable
		\(h:\mathbb S^{n-1}\to[0,C]\),
		\[
		\mu_2(T_n+2\pi^2h+b_n)-\mu_1(T_n+2\pi^2h+b_n)
		\geq \delta_{\rm eff}(n,C).
		\]
	\end{lemma}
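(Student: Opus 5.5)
The argument is by compactness over the set of potentials \(\mathcal K_C=\{h:\mathbb S^{n-1}\to[0,C]\ \text{measurable}\}\), after two structural facts about \(T_n\) are in place. Since \(b_n\) is an additive scalar, we drop it and write \(A_h=T_n+2\pi^2h\).

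\textbf{Step 1: compact resolvent.} The multiplier \(\phi_n\) is increasing, with \(\phi_n(0)=0\) and \(\phi_n(x)\sim 2\pi^2\log x\to\infty\). Hence \(T_n\ge0\) has compact resolvent, with eigenvalues \(\tau_{n,l}\to\infty\). The potential \(2\pi^2h\) is a bounded perturbation of norm at most \(2\pi^2C\), so \(A_h\) also has compact resolvent and purely discrete spectrum. Min-max then gives \(\mu_k(T_n)\le\mu_k(A_h)\le\mu_k(T_n)+2\pi^2C\).

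\textbf{Step 2: simplicity of the ground state.} We show \(\mu_1(A_h)\) is simple for every \(h\in\mathcal K_C\). The plan is to prove that \(e^{-tT_n}\) is positivity improving. Because \(\phi_n\) is a Bernstein function, \(e^{-t\phi_n(D_S)}\) is a subordination of the semigroup \(e^{-\sigma D_S}\). Use the digamma integral representation
\[
\psi(a+x)-\psi(a)=\int_0^\infty \frac{e^{-au}(1-e^{-xu})}{1-e^{-u}}\,du ,
\]
which exhibits \(\phi_n\) as Bernstein with a Lévy measure that is positive on \((0,\infty)\). Next, \(e^{-\sigma D_S}=e^{\sigma(n-2)/2}e^{-\sigma\sqrt{-\Delta+((n-2)/2)^2}}\) is a Poisson-type semigroup. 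Its kernel, \(\sum_l e^{-\sigma l}Z_l(\theta\cdot\theta')\), is exactly the classical Poisson kernel of the unit ball evaluated at radius \(e^{-\sigma}\), hence strictly positive. Subordination against a probability measure that is not a point mass at \(0\) then yields a strictly positive kernel for \(e^{-tT_n}\). The bounded potential is handled by the Trotter product formula: \(e^{-tA_h}\) dominates \(e^{-2\pi^2Ct}e^{-tT_n}\) pointwise on kernels, so it is also positivity improving. Perron--Frobenius (Reed--Simon XIII.44) then gives simplicity and a positive ground state.

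\textbf{Step 3: compactness and contradiction.} Suppose the uniform bound fails. Then there are \(h_j\in\mathcal K_C\) with \(\mu_2(A_{h_j})-\mu_1(A_{h_j})\to0\). Pass to a subsequence with \(h_j\rightharpoonup^* h\) in \(L^\infty\); the weak-* limit still lies in \(\mathcal K_C\), since that set is convex and weak-* closed. Take orthonormal \(u_j,v_j\) spanning the first two eigenspaces. Their \(T_n\)-energies are bounded by \(\mu_2(T_n)+2\pi^2C\), so \(T_n^{1/2}u_j\) is bounded in \(L^2\). Because \(\tau_{n,l}\to\infty\), the form domain embeds compactly into \(L^2\), and we may pass to strong \(L^2\) limits \(u,v\), which remain orthonormal. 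Products converge strongly in \(L^1\), so \(\int h_ju_j^2\to\int hu^2\), and similarly for \(v\) and the mixed term \(uv\). Lower semicontinuity of the \(T_n\)-form gives \(\langle A_h w,w\rangle\le\liminf\mu_2(A_{h_j})\) for every unit \(w\in\operatorname{span}\{u,v\}\). The eigenvalues themselves converge: \(\limsup\mu_k(A_{h_j})\le\mu_k(A_h)\) by testing with the limit's eigenfunctions, since on fixed finite-energy vectors the potential terms converge weakly. Combining this with the lower-semicontinuity bound on the two-dimensional span gives \(\mu_2(A_h)\le\lim\mu_1(A_{h_j})\le\mu_1(A_h)\). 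This contradicts Step 2.

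The main obstacle is Step 2, specifically verifying that \(\phi_n\) is Bernstein with a nontrivial Lévy measure so that the subordinated kernel is strictly positive. In the even-dimensional case the argument \((n-1)/2\) is a half-integer, and the representation above must still be checked to apply. If subordination proves awkward, the fallback is to show directly that \(e^{-tT_n}\) has a positive kernel. This follows from the positivity of the Gegenbauer (zonal) expansion with coefficients \(e^{-t\phi_n(l)}\), where \(e^{-t\phi_n(l)}\) is a completely monotone sequence in \(l\) and hence a mixture of the sequences \(e^{-\sigma l}\).
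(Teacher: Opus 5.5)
Your proposal is correct and follows essentially the same route as the paper: simplicity of the ground state via Bochner subordination of the spherical Poisson semigroup, then Trotter domination of the bounded potential and Perron--Frobenius, then a weak-$*$ compactness contradiction. The even-dimensional case you flag causes no trouble, because the digamma integral holds for every $a=(n-1)/2>0$; the only minor difference is your last step, which uses min-max semicontinuity ($\mu_2(A_h)\le\liminf\mu_2(A_{h_j})$ on the span of the limits, and $\limsup\mu_1(A_{h_j})\le\mu_1(A_h)$) where the paper passes to the limit in the weak eigenvalue equations and identifies the common limit as $\mu_1(A_h)$, and both arguments are valid.
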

	
	\begin{proof}
		The scalar \(b_n\) does not affect gaps, so write
		\[
		A_h=T_n+2\pi^2h .
		\]
		We first record that the first eigenvalue of \(A_h\) is simple for every
		measurable \(h\) with \(0\le h\le C\).  We use the functional-calculus
		definition of \(T_n\) from Subsection~\ref{sec:constants}: if \(D_S\) is the
		spherical degree operator, \(D_SY_l=lY_l\), equivalently
		\[
		D_S=\sqrt{-\Delta_{\SSph^{n-1}}+\left(\frac{n-2}{2}\right)^2}
		-\frac{n-2}{2}.
		\]
		Again, this is an angular operator on \(\SSph^{n-1}\), not the radial
		derivative \(D_s\) used in the one-dimensional estimates below.  The operator
		\(T_n\) is defined from \(D_S\) by spectral calculus, and the dimension-shift
		asymptotic identifies the scalar function:
		\[
		T_n=\phi_n(D_S),\qquad
		\phi_n(x)=2\pi^2
		\left[
		\psi\!\left(\frac{n-1}{2}+x\right)
		-\psi\!\left(\frac{n-1}{2}\right)
		\right],
		\]
		so \(T_nY_l=(b_{n+2l}-b_n)Y_l\).  For \(a=(n-1)/2\), the integral
		representation
		\[
		\psi(a+x)-\psi(a)
		=
		\int_0^\infty
		\frac{e^{-at}(1-e^{-xt})}{1-e^{-t}}\,dt
		\]
		shows that \(\phi_n\) is a Bernstein function.  Hence \(e^{-\sigma T_n}\)
		is a Bochner subordinate of the spherical Poisson semigroup \(e^{-tD_S}\)
		\cite[Theorem 3.2 and Chapter 13]{SSV}.  For \(r=e^{-t}\), this semigroup
		agrees with integration against the spherical Poisson kernel
		\[
		P_r(\theta,\eta)
		=
		|\SSph^{n-1}|^{-1}
		\frac{1-r^2}{(1-2r\,\theta\cdot\eta+r^2)^{n/2}},
		\]
		with respect to surface measure \(d\eta\).  Indeed, by the standard
		Poisson-kernel expansion, equivalently by Funk--Hecke, the degree \(l\)
		zonal component of \(P_r\) is \(r^l\), so the integral operator acts on
		degree \(l\) harmonics by the multiplier \(r^l=e^{-tl}\).
		Let \(\mu_\sigma\) be the subordinator law.  Since \(\phi_n(0)=0\) and
		\(\phi_n(\lambda)\to\infty\),
		\[
		\mu_\sigma(\{0\})
		=
		\lim_{\lambda\to\infty}\int_0^\infty e^{-\lambda t}\,d\mu_\sigma(t)
		=
		\lim_{\lambda\to\infty}e^{-\sigma\phi_n(\lambda)}
		=0 .
		\]
		Thus there is a compact interval \([t_0,t_1]\subset(0,\infty)\) with
		\(\mu_\sigma([t_0,t_1])>0\).  On this interval the Poisson kernel is bounded
		below by a positive constant; for instance
		\[
		P_{e^{-t}}(\theta,\eta)
		\ge
		|\SSph^{n-1}|^{-1}\frac{1-e^{-2t_0}}{2^n}
		=:p_* .
		\]
		Therefore, for \(f\ge0\),
		\[
		(e^{-\sigma T_n}f)(\theta)
		=
		\int_0^\infty (e^{-tD_S}f)(\theta)\,d\mu_\sigma(t)
		\ge
		m_\sigma\int_{\SSph^{n-1}}f(\eta)\,d\eta,\qquad
		m_\sigma:=\mu_\sigma([t_0,t_1])p_*>0 .
		\]
		Let \(V_h=2\pi^2h\).  Since \(T_n\) is self-adjoint and nonnegative and
		\(V_h\) is a bounded symmetric multiplication operator,
		\(A_h=T_n+V_h\) is semibounded self-adjoint.  The semigroup Trotter product
		formula \cite[Theorem 5.12]{Teschl}, together with the inequalities
		\[
		e^{-2\pi^2C\sigma/k}I\le e^{-(\sigma/k)V_h}\le I
		\]
		give, first for the positive Trotter products and then by strong \(L^2\)
		convergence,
		\[
		e^{-\sigma A_h}f
		\ge
		e^{-2\pi^2C\sigma}e^{-\sigma T_n}f
		\ge
		e^{-2\pi^2C\sigma}m_\sigma
		\int_{\SSph^{n-1}}f
		\]
		for every nonzero \(f\ge0\).  Since \(T_n\) has compact resolvent and \(V_h\)
		is bounded, \(e^{-\sigma A_h}\) is compact.  The last display makes it
		positivity improving.  By the Perron--Frobenius theorem for compact
		positivity-improving operators \cite[Theorem XIII.43]{ReedSimonIV}, the
		spectral radius of \(e^{-\sigma A_h}\) is a simple eigenvalue with a strictly
		positive eigenvector.  Equivalently, the first eigenvalue of \(A_h\) is simple
		for every such \(h\).
		Since \(A_h\) is real and self-adjoint, this real simplicity is the same as
		complex simplicity: the real and imaginary parts of any complex first
		eigenfunction are real first eigenfunctions.
		
		We now prove uniformity by compactness.  Let
		\[
		\mathcal H_C=\{h\in L^\infty(\SSph^{n-1}):0\le h\le C\}.
		\]
		The unit ball of \(L^\infty\) is weak-\(*\) compact, and here it is
		sequentially compact on bounded sets because \(L^1(\SSph^{n-1})\) is
		separable.  Suppose, to the contrary, that the gaps of \(A_h\) are not
		uniformly bounded below on \(\mathcal H_C\).  Then there are \(h_j\in
		\mathcal H_C\) such that
		\[
		\mu_2(A_{h_j})-\mu_1(A_{h_j})\to0 .
		\]
		After passing to a subsequence, \(h_j\to h\) weak-\(*\) in \(L^\infty\) for
		some \(h\in\mathcal H_C\).
		
		Let \(u_j,v_j\) be \(L^2\)-orthonormal eigenfunctions for the first two
		eigenvalues of \(A_{h_j}\).  The min-max principle and \(0\le V_{h_j}\le
		2\pi^2C\) give
		\[
		\mu_2(A_{h_j})\le \mu_2(T_n)+2\pi^2C,
		\]
		so
		\[
		\langle u_j,T_nu_j\rangle,\ \langle v_j,T_nv_j\rangle
		\le \mu_2(T_n)+2\pi^2C .
		\]
		Thus \(u_j\) and \(v_j\) are bounded in the form domain
		\[
		\mathcal Q=\operatorname{Dom}(T_n^{1/2}),\qquad
		\|w\|_{\mathcal Q}^2=\|w\|_2^2+\langle w,T_nw\rangle .
		\]
		Since \(T_n\) has eigenvalues tending to infinity and each spherical-harmonic
		degree has finite multiplicity, the eigenvalue list with multiplicity tends
		to infinity.  Hence the embedding \(\mathcal Q\hookrightarrow
		L^2(\SSph^{n-1})\) is compact.  Passing to a further subsequence,
		\[
		u_j\to u,\qquad v_j\to v
		\quad\text{strongly in }L^2,
		\]
		and weakly in \(\mathcal Q\).  The limits are still orthonormal.  Passing to
		a further subsequence, the two eigenvalues converge to a common value
		\(\lambda\), because their difference tends to zero.
		
		For each fixed \(\varphi\in\mathcal Q\), the products
		\(u_j\overline{\varphi}\) and \(v_j\overline{\varphi}\) converge in \(L^1\).
		For instance,
		\[
		\begin{aligned}
			&\left|\int h_j u_j\overline{\varphi}
			-\int h u\overline{\varphi}\right|  \\
			&\qquad\le
			C\|(u_j-u)\overline{\varphi}\|_{L^1}
			+\left|\int (h_j-h)u\overline{\varphi}\right|
			\to0 .
		\end{aligned}
		\]
		The same argument applies to \(v_j\).  Together with weak convergence in
		\(\mathcal Q\), this allows passage to the limit in the weak eigenvalue
		equations.  More explicitly, the equations are used in form-pairing form:
		for every \(\varphi\in\mathcal Q\),
		\[
		\langle T_n^{1/2}u_j,T_n^{1/2}\varphi\rangle
		+2\pi^2\int h_ju_j\overline\varphi
		=
		\mu_1(A_{h_j})\langle u_j,\varphi\rangle,
		\]
		and similarly for \(v_j\).  Weak convergence in \(\mathcal Q\) gives
		convergence of the kinetic half-power pairing, while the displayed \(L^1\)
		estimate gives convergence of the potential term.  Hence
		\[
		A_hu=\lambda u,\qquad A_hv=\lambda v .
		\]
		It remains to identify \(\lambda\) as the first eigenvalue of \(A_h\).  For
		any normalized \(\varphi\in\mathcal Q\),
		\[
		\mu_1(A_{h_j})\le
		\langle\varphi,T_n\varphi\rangle
		+2\pi^2\int_{\SSph^{n-1}}h_j|\varphi|^2
		\to
		\langle\varphi,T_n\varphi\rangle
		+2\pi^2\int_{\SSph^{n-1}}h|\varphi|^2 .
		\]
		Taking the infimum over \(\varphi\) gives \(\lambda\le\mu_1(A_h)\).  Since
		\(\lambda\) is an eigenvalue of \(A_h\), it is not below the bottom of the
		spectrum, so \(\mu_1(A_h)\le\lambda\).  Thus \(\lambda=\mu_1(A_h)\), and
		\(u,v\) are two orthonormal first eigenfunctions of \(A_h\).  This contradicts
		the simplicity proved above.  Therefore
		\[
		\inf_{h\in\mathcal H_C}\bigl(\mu_2(A_h)-\mu_1(A_h)\bigr)>0 .
		\]
		This infimum is the required \(\delta_{\rm eff}(n,C)\), and adding the scalar
		\(b_n\) does not change the gap.
	\end{proof}
	
	\paragraph{Fixed angular truncation.}
	Choose one cutoff
	\[
	N=N(n,C,L,\eta_{\rm gap},\eta_{\rm form},\rho_{\rm res},M_C)
	\]
	with \(N\ge N_A(n)\), before the large-\(R\) threshold is chosen.
	
	\paragraph{Overview of the finite-mode estimates.}
	The next technical block has four separate jobs.  The angular effective gap
	above is a compactness statement on the sphere: after the radial first branch
	has been reduced to the angular form \(T_n+2\pi^2h+b_n\), it supplies a
	uniform first-to-second angular gap.  The radial branch gap below is a
	one-dimensional statement for each separated radial row; its only delicate
	case is the critical \((n,l)=(2,0)\) channel, where the second eigenvalue is
	controlled by a critical Bessel comparison and the first eigenvalue by a
	simple sine trial.  The finite-box Jost data lemma then records the ODE and
	Volterra estimates needed for normalized radial eigenfunctions on the long
	interval.  Finally, the finite first-branch Hadamard lemma converts those
	radial endpoint estimates into the \(2\pi^2h\) boundary-flux matrix used in
	the lower form.  Thus the first two lemmas create the gap/reserve, while the
	last two lemmas compute the finite first-branch coefficient to order
	\(R^{-3}\).
	
	\paragraph{Radial branch gap.}
	\begin{lemma}[Radial branch gap and critical channel]
		\label{lem:radial-branch-gap}
		For each \(n\ge2\) there are constants
		\[
		c_{\rm rad}^0(n)>0,\qquad R_{\rm rad}(n)<\infty
		\]
		such that, for every angular degree \(l\ge0\) and every
		\(R\ge R_{\rm rad}(n)\), the first two Friedrichs-Dirichlet eigenvalues of
		the separated radial operator \(H_{l,R}\) satisfy
		\[
		\lambda_{2,l,R}-\lambda_{1,l,R}\ge c_{\rm rad}^0(n)R^{-2}.
		\]
	\end{lemma}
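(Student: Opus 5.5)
For each degree \(l\), write \(H_{l,R}-a_n^2=-\frac{d^2}{ds^2}+c_l\sinh^{-2}(s)\) on \((0,R)\), where \(c_l=l(l+n-2)+c_n^0\). Since subtracting the constant \(a_n^2\) does not change gaps, it suffices to bound the gap of this operator. I split the argument by the sign of \(c_l\).

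\emph{Sign of \(c_l\).} For \(n\ge3\), \(c_n^0=(n-1)(n-3)/4\ge0\), so \(c_l\ge0\) for every \(l\). For \(n=2\) and \(l\ge1\), \(c_l=l^2-\tfrac14>0\). Hence the only channel with a negative coefficient is the critical one, \((n,l)=(2,0)\), where \(c_0=-\tfrac14\).

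\emph{Case \(c_l\ge0\): convexity.} The function \(s\mapsto\sinh^{-2}(s)\) is convex on \((0,\infty)\). Indeed,
\[
\bigl(\sinh^{-2}\bigr)''=\frac{4\cosh^2 s+2}{\sinh^4 s}>0.
\]
So for \(c_l\ge0\) the potential \(V_l=c_l\sinh^{-2}\) is convex. I would then invoke Lavine's sharp gap theorem for one-dimensional Schr\"odinger operators with convex potential and Dirichlet conditions on an interval of length \(R\), which gives
\[
\lambda_2-\lambda_1\ge 3\pi^2/R^2 .
\]
The constant is independent of \(c_l\), so this bound is uniform in \(l\).

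The singular endpoint is handled by truncation. Replace \((0,R)\) by \((\epsilon,R)\) with Dirichlet conditions at both ends; Lavine applies there, since \(V_l\) is bounded and convex on \([\epsilon,R]\). Then let \(\epsilon\to0\). For \(c_l\ge0\) the quadratic form \(\int|u'|^2+c_l\sinh^{-2}|u|^2\) is nonnegative. Compactly supported functions in \((0,R]\) vanishing at \(R\) form a form core of the Friedrichs extension. The truncated forms are restrictions of this form to an increasing family of subspaces whose union is that core. Hence the first two truncated eigenvalues decrease to \(\lambda_{1,l,R}\) and \(\lambda_{2,l,R}\), by monotone convergence of forms, or directly by min-max with cutoff test functions. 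The gap bound therefore passes to the limit. I expect this justification to be the main technical point to check: in particular that, for \(0\le c_l<3/4\) (limit-circle at \(0\)), the Friedrichs extension is indeed the form closure on this core. This is standard.

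\emph{Critical channel \((n,l)=(2,0)\).} Here I compare with a Bessel operator. Write
\[
-\tfrac14\sinh^{-2}(s)=-\tfrac14 s^{-2}+w(s),\qquad w(s)=\tfrac14\bigl(s^{-2}-\sinh^{-2}s\bigr)\ge0 .
\]
Then, in the form sense on the common Friedrichs form domain,
\[
H\ \ge\ -\frac{d^2}{ds^2}-\frac{1}{4s^2}.
\]
This is the critical (order-zero) Bessel operator on \((0,R)\) with Friedrichs condition at \(0\) and Dirichlet condition at \(R\); its eigenfunctions are \(\sqrt s\,J_0(j_{0,k}s/R)\) with eigenvalues \(j_{0,k}^2/R^2\). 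Min-max therefore gives \(\lambda_2\ge j_{0,2}^2/R^2\approx30.47/R^2\). For the upper bound on \(\lambda_1\), use the sine trial \(\sin(\pi s/R)\), which lies in the form domain; since the potential is \(\le0\), this gives \(\lambda_1\le\pi^2/R^2\). Hence
\[
\lambda_2-\lambda_1\ge(j_{0,2}^2-\pi^2)/R^2>0 .
\]

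\emph{Conclusion.} Taking \(c_{\rm rad}^0(n)=\min\{3\pi^2,\ j_{0,2}^2-\pi^2\}\) and \(R_{\rm rad}(n)=1\), say, proves the lemma. No large-\(R\) threshold is actually needed, since all three bounds hold for every \(R>0\).
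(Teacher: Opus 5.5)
Your proposal is correct and follows essentially the same route as the paper. For \(c_l\ge0\) you use convexity of \(c_l\sinh^{-2}s\), Lavine's \(3\pi^2(R-\epsilon)^{-2}\) bound on truncated intervals, and monotone form convergence as \(\epsilon\downarrow0\); in the critical channel \((n,l)=(2,0)\) you use the Bessel comparison giving \(\lambda_2\ge j_{0,2}^2R^{-2}\) together with the sine trial giving \(\lambda_1\le\pi^2R^{-2}\). Your remark that no large-\(R\) threshold is actually needed, and your constant \(\min\{3\pi^2,\,j_{0,2}^2-\pi^2\}\), are harmless refinements of the paper's choice, whose argument is also valid for every \(R>0\).
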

	
	\begin{proof}
		Write
		\[
		c_l=l(l+n-2)+\frac{(n-1)(n-3)}{4}.
		\]
		Except in the critical channel \((n,l)=(2,0)\), one has \(c_l\ge0\), and the
		only zero case is \((n,l)=(3,0)\).  For \(c_l\ge0\), the potential
		\(c_l\sinh^{-2}s\) is convex on \((0,\infty)\), since
		\[
		\frac{d^2}{ds^2}\sinh^{-2}s
		=
		4\coth^2s\,\sinh^{-2}s+2\sinh^{-4}s>0.
		\]
		This includes the zero potential in the \((n,l)=(3,0)\) channel.  Truncate
		the endpoint to
		\((\epsilon,R)\) and impose Dirichlet conditions at both endpoints.  The
		one-dimensional convex-potential gap theorem \cite{Lavine,AndrewsClutterbuck}
		then gives
		\[
		\lambda_2(H_{l,R}^{\epsilon})-\lambda_1(H_{l,R}^{\epsilon})
		\ge 3\pi^2(R-\epsilon)^{-2}.
		\]
		Extend \(H_0^1(\epsilon,R)\) functions by zero to \((0,R)\).  As
		\(\epsilon\downarrow0\) these closed form domains increase monotonically, and
		their union is dense in the Friedrichs form domain, the completion of
		\(C_c^\infty(0,R)\) under the form norm.  The corresponding eigenvalues
		therefore converge to the singular-endpoint Friedrichs eigenvalues by
		monotone form convergence.  For each \(\epsilon\),
		\[
		\lambda_2(H_{l,R}^{\epsilon})
		\geq
		\lambda_1(H_{l,R}^{\epsilon})+3\pi^2(R-\epsilon)^{-2}.
		\]
		Taking limits in this inequality gives
		\(\lambda_{2,l,R}\geq\lambda_{1,l,R}+3\pi^2R^{-2}\).
		The right side of the truncated gap bound depends only on the interval length,
		not on the coefficient \(c_l\).  Consequently the same limiting inequality
		holds for every noncritical angular degree with the same constant
		\(3\pi^2\).  No uniform-in-\(l\) convergence rate as
		\(\epsilon\downarrow0\) is being used; the limit is taken degree by degree
		after a bound whose right side is already independent of \(l\).
		
		For the exceptional critical channel, where \((n,l)=(2,0)\), first subtract
		the harmless additive constant \(a_2^2=1/4\):
		\[
		\widehat H_R=H_R-\frac14
		=-\frac{d^2}{ds^2}-\frac{1}{4}\sinh^{-2}s.
		\]
		This does not change the branch gap.  Compare \(\widehat H_R\) with the exact
		critical Bessel operator
		\[
		B_R=-\frac{d^2}{ds^2}-\frac{1}{4}s^{-2}
		\]
		with the same Friedrichs endpoint at \(0\) and Dirichlet endpoint at \(R\).
		Because
		\[
		\sinh^{-2}s=s^{-2}-\frac13+O(s^2)\qquad(s\downarrow0),
		\]
		the difference \(\frac14(s^{-2}-\sinh^{-2}s)\) extends to a bounded potential
		at the singular endpoint.  Thus \(\widehat H_R\) and \(B_R\) are closures of
		the same critical Friedrichs core, and have the same form domain: the bounded
		potential perturbation is continuous with respect to the closed critical
		Friedrichs form and does not change its closure.  Since \(\sinh s\ge s\), one
		has \(\widehat H_R\ge B_R\) in form order on that common domain, and the
		Courant--Fischer min-max monotonicity for closed semibounded forms therefore
		gives \(\lambda_k(\widehat H_R)\ge\lambda_k(B_R)\) for every indexed
		eigenvalue \(k\).  In particular, the critical Bessel spectrum gives
		\[
		\lambda_2(\widehat H_R)\ge j_{0,2}^2R^{-2}.
		\]
		Here the Friedrichs eigenfunctions of \(B_R\) are
		\(\sqrt{s}\,J_0(j_{0,k}s/R)\); the companion
		\(\sqrt{s}\,Y_0(j_{0,k}s/R)\sim\sqrt{s}\log s\) branch is excluded by the
		critical Friedrichs endpoint condition.
		For the first eigenvalue, the simpler Dirichlet trial
		\[
		u_R(s)=\sin(\pi s/R)
		\]
		is admissible in the same critical Friedrichs form domain: it vanishes at
		\(R\), has \(u_R(s)=O(s)\) at the singular endpoint, and has finite critical
		form energy.  Since the potential in \(\widehat H_R\) is nonpositive,
		\[
		\begin{aligned}
			\lambda_1(\widehat H_R)
			&\le
			\frac{\int_0^R |u_R'(s)|^2\,ds
				-\frac14\int_0^R\sinh^{-2}s\,|u_R(s)|^2\,ds}
			{\int_0^R |u_R(s)|^2\,ds}  \\
			&\le
			\frac{\int_0^R |u_R'(s)|^2\,ds}
			{\int_0^R |u_R(s)|^2\,ds}
			=
			\pi^2R^{-2}.
		\end{aligned}
		\]
		The second positive zero of \(J_0\) satisfies \(j_{0,2}>5>\pi\)
		\cite[Sections 10.21 and 10.75(iii)]{DLMF}, so
		\[
		j_{0,2}^2-\pi^2>0 .
		\]
		Thus \(\widehat H_R\), and hence \(H_R\), has a critical branch gap
		\(\ge c_{\rm crit}R^{-2}\) with, for instance,
		\[
		c_{\rm crit}=\frac12\bigl(j_{0,2}^2-\pi^2\bigr)>0 .
		\]
		Combining this with the noncritical
		channels fixes the dimension-only constants \(c_{\rm rad}^0(n)>0\) and
		\(R_{\rm rad}(n)<\infty\) used in the lower form.  This uses the Bessel
		zero reference cited above, together
		with the standard Friedrichs-extension and Sturm--Liouville endpoint
		framework \cite[Section 2.3 and Chapter 9]{Teschl}.
	\end{proof}
	
	\paragraph{Finite-box Jost data.}
	\begin{lemma}[Finite-box Jost data]
		\label{lem:finite-box-jost-data}
		Fix the angular cutoff \(N\).  For the finitely many separated radial
		operators \(H_{l,R}\), \(0\le l\le N\), the regular finite-box construction
		has constants \(R_N<\infty\) and \(0<c_0<C_0<\infty\), depending only on \(N\)
		and \(n\), with the following properties.  First, the normalized first branch
		satisfies, uniformly for \(l\le N\),
		\[
		\partial_s y_{l,R}(R)
		=
		-\sqrt{2/R}\,\pi/R+O_N(R^{-5/2}).
		\]
		Second, in the critical channel \((n,l)=(2,0)\), let
		\((\lambda_{k,0,R},\psi_k)\) be the \(L^2(0,R)\)-normalized eigenpairs,
		ordered increasingly, and set
		\[
		\rho_k^2=\lambda_{k,0,R}-a_2^2,\qquad
		\mu_k=\lambda_{k,0,R}-\lambda_{1,0,R}
		=\rho_k^2-\rho_1^2 .
		\]
		There is a number \(\rho_0>0\) such that, for \(R\ge R_N\) and \(k\ge2\),
		\[
		\mu_k\ge c_0k^2R^{-2},\qquad
		c_0\,\frac{k}{R}\le \rho_k\le C_0\,\frac{k}{R},
		\]
		and
		\[
		\begin{array}{ll}
			|\psi_k(s)|\le CkR^{-3/2}s^{1/2},
			& \rho_k<\rho_0/2,\\[3pt]
			|\psi_k(s)|\le Ck^{1/2}R^{-1}s^{1/2},
			& \rho_k\ge\rho_0/2,\ \rho_k s\le1,\\[3pt]
			|\psi_k(s)|\le CR^{-1/2},
			& \rho_k s\ge1,
		\end{array}
		\qquad 0<s\le1 .
		\]
	\end{lemma}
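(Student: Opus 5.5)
Both parts reduce to explicit ODE comparison on the long interval \((0,R)\), because only finitely many degrees \(l\le N\) are involved. For the first-branch slope, I would fix \(l\le N\) and write \(\kappa^2=\lambda_{1,l,R}-a_n^2\). From the ball asymptotics above, \(\kappa^2=\pi^2R^{-2}+O_N(R^{-3})\). Away from the origin, on \(s\ge s_0\) with \(s_0=s_0(N)\) fixed, the potential \(c_l\sinh^{-2}s\) is \(O_N(e^{-2s})\). There I would compare \(y_{l,R}\) with the free solution \(\sin(\kappa(R-s))\) using a Volterra equation based at \(s=R\):
\[
y(s)=A\Big[\kappa^{-1}\sin(\kappa(R-s))-\int_s^R\kappa^{-1}\sin(\kappa(t-s))\,c_l\sinh^{-2}(t)\,y(t)\,dt\Big],
\qquad A=-y'(R).
\]
Since \(|y|\lesssim A\,\min(R-s,\kappa^{-1})\) and the potential decays exponentially, Gronwall gives
\[
y(s)=A\kappa^{-1}\sin(\kappa(R-s))\bigl(1+O(e^{-2s})\bigr)+O(Ae^{-2s})
\]
uniformly on \([s_0,R]\). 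On \([0,s_0]\) the eigenfunction is a fixed multiple of the regular (Friedrichs) solution, and \(\kappa\) varies in a compact set, so its contribution to the \(L^2\) norm is \(O(A^2)\). Normalization then reads
\[
1=\int_0^R y^2=A^2\kappa^{-2}\bigl(R/2+O_N(1)\bigr).
\]
This gives \(A=\kappa\sqrt{2/R}\,(1+O_N(R^{-1}))\), and hence
\[
\partial_sy_{l,R}(R)=-\sqrt{2/R}\,\pi/R+O_N(R^{-5/2}),
\]
using \(\kappa=\pi/R+O(R^{-2})\). Uniformity in \(l\) is automatic because only finitely many degrees occur.

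For the critical channel \((n,l)=(2,0)\), the frequency bounds come from the Bessel comparison in Lemma~\ref{lem:radial-branch-gap}. The form inequality \(\widehat H_R\ge B_R\) gives \(\rho_k^2\ge j_{0,k}^2R^{-2}\ge c k^2R^{-2}\). For the upper bound, the operator \(\widehat H_R\) is bounded above by \(B_R+\frac14(s^{-2}-\sinh^{-2}s)\), and the bracket is \(\le C'\min(1,s^{-2})\). A trial space of Dirichlet sines on \((R/2,R)\), or the Bessel eigenfunctions perturbed by the bounded potential, gives \(\rho_k^2\le C k^2R^{-2}\) for \(k\le R\). For larger \(k\) the same min-max bound holds with \(\rho_k\sim k/R\), since the perturbation is bounded. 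The gap \(\mu_k=\rho_k^2-\rho_1^2\ge c_0k^2R^{-2}\) then follows for \(k\ge2\): for \(k=2\) it is the branch gap, and for large \(k\) the growth \(\rho_k^2\gtrsim k^2/R^2\) dominates \(\rho_1^2\le\pi^2/R^2\).

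For the pointwise bounds on \((0,1]\), I would write \(\psi_k=\alpha_k\,\phi(\cdot,\rho_k)\), where \(\phi(s,\rho)\) is the Friedrichs-regular solution normalized by \(\phi\sim s^{1/2}\) at \(0\). The regime \(\rho_k s\ge1\) is the Jost/WKB regime: away from the origin \(\psi_k\) is an oscillation of amplitude about \(\sqrt{2/R}\), by the same Volterra argument as in the first part, and Sturm comparison carries this down to \(\rho_k s\approx1\). That gives \(|\psi_k|\le CR^{-1/2}\). For \(\rho_k s\le1\), the perturbed Bessel behaviour gives \(|\phi(s,\rho)|\le Cs^{1/2}\). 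The amplitude \(\alpha_k\) is then fixed by matching at \(s\approx\rho_k^{-1}\). There the Bessel profile \(\sqrt{s}J_0(\rho s)\) has size \(\rho^{-1/2}\), and must match the outer amplitude \(R^{-1/2}\), so \(\alpha_k\lesssim\rho_k^{1/2}R^{-1/2}\).
\begin{itemize}
\item If \(\rho_k<\rho_0/2\), the matching happens at large \(s\), where the hyperbolic correction changes the Jost function only by a bounded factor. Then \(\alpha_k\lesssim \rho_kR^{-1/2}\cdot\rho_k^{-1/2}\)-type bookkeeping yields \(|\psi_k|\lesssim kR^{-3/2}s^{1/2}\).
\item If \(\rho_k\ge\rho_0/2\) and \(\rho_ks\le1\), one gets \(\alpha_k\lesssim\rho_k^{1/2}R^{-1/2}\lesssim k^{1/2}R^{-1}\).
\end{itemize}

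The main obstacle is getting estimates uniform in \(k\) for the critical channel. This includes the logarithmic critical endpoint, and the low-frequency Jost function \(\rho\mapsto\) amplitude of the regular solution, whose matching at \(s\sim\rho^{-1}\gg1\) is exactly where \(\sinh\) and \(s\) differ. I would first try to treat that matching with an explicit Legendre-function representation of the \(n=2\), \(l=0\) radial solutions, namely conical functions \(P_{-1/2+i\rho}(\cosh s)\). Their uniform asymptotics give both the small-\(s\) behaviour and the Jost amplitude \(|\Gamma(i\rho)/\Gamma(\tfrac12+i\rho)|\sim\rho^{-1/2}\) directly.
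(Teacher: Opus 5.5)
Most of your plan follows the paper's route. You get the first-branch slope from a tail Volterra equation and the sine-tail normalization $\int_0^R y^2=A^2\kappa^{-2}(R/2+O_N(1))$. You get the root bounds from $\widehat H_R\ge B_R$, a min--max space of Dirichlet sines on a subinterval, and $j_{0,k}^2-\pi^2\ge ck^2$ for $k\ge2$. Since $V_0\le0$, the sine space gives $\rho_k\le Ck/R$ for all $k$ at once, so your split at $k\approx R$ is unnecessary. For $\rho_k\ge\rho_0/2$ you compare a Jost amplitude $\gtrsim\rho^{-1/2}$ with a tail amplitude $\lesssim R^{-1/2}$, as the paper does. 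One small repair: carrying the tail amplitude inward is not a Sturm-comparison fact, but it is easy. With $E=u'^2+(\rho^2-V_0)u^2$ one has $\bigl(E/(\rho^2-V_0)\bigr)'=V_0'u'^2/(\rho^2-V_0)^2\ge0$, because $V_0=-\frac14\sinh^{-2}s$ is increasing. Hence $u^2\le E/(\rho^2-V_0)$ never exceeds its tail value $u^2+\rho^{-2}u'^2$.

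The genuine gap is the low-frequency regime $\rho_k<\rho_0/2$, which is exactly where the claimed bound $CkR^{-3/2}s^{1/2}$ is stronger than the high-frequency one.

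\emph{What fails.} You assert that the hyperbolic correction changes the Jost amplitude only by a bounded factor relative to the Bessel model, and that $|\Gamma(i\rho)/\Gamma(\tfrac12+i\rho)|\sim\rho^{-1/2}$. Both are false for small $\rho$. Exactly, $|\Gamma(i\rho)/\Gamma(\tfrac12+i\rho)|^2=\coth(\pi\rho)/\rho$. So the ratio is $\sim\rho^{-1/2}$ only as $\rho\to\infty$, and it is $\sim(\sqrt\pi\,\rho)^{-1}$ as $\rho\to0$. The hyperbolic and Bessel amplitudes therefore differ by the unbounded factor $\rho^{-1/2}$.

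\emph{Consequence.} With your amplitude, the normalization gives $\alpha_k\lesssim\rho_k^{1/2}R^{-1/2}$, which is what $\rho_kR^{-1/2}\cdot\rho_k^{-1/2}$ equals. That yields only $|\psi_k(s)|\lesssim k^{1/2}R^{-1}s^{1/2}$. For $k=2$ this misses the claim by a factor $R^{1/2}$, and in the later Green-diagonal sum it would only give $G_R(s,s)\lesssim s\log R$.

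\emph{Missing idea.} The potential $-\frac14\sinh^{-2}s$ is short-range. The zero-energy Friedrichs solution $y(s)=\sqrt{\tanh(s/2)}\,F(\tfrac12,\tfrac12;1;\tanh^2(s/2))$ behaves like $2^{-1/2}s^{1/2}$ at $0$ but grows linearly, $y(s)=\pi^{-1}s+O(1)$. The Bessel zero-energy solution $s^{1/2}$ does not grow linearly.

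\emph{How the paper closes it.} It matches at a fixed point $S_N$ with $y'(S_N)\neq0$, not at $s\sim\rho^{-1}$.
\begin{enumerate}
\item As $\rho\to0$, the regular solution's Cauchy data on $[0,S_N]$ converge to those of $y$. This gives the tail amplitude $\mathcal A_0(\rho)\ge c_N\rho^{-1}$.
\item It needs $\int_{S_N}^R\cos^2(\rho_k(s-S_N)-\theta)\,ds\gtrsim R$, uniformly for $\rho_kR\ge c_*$. When $\rho_kR$ is bounded, only boundedly many half-periods fit in the box, and compactness gives the bound; otherwise phase averaging does.
\item Together these give $\|u_0(\cdot,\rho_k)\|\ge c\rho_k^{-1}R^{1/2}$. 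Hence $|\psi_k(s)|\le C\rho_kR^{-1/2}s^{1/2}\le CkR^{-3/2}s^{1/2}$.
\end{enumerate}
Your conical-function route would also work, provided you use the exact modulus above rather than its large-$\rho$ asymptotic.
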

	
	\noindent\emph{Orientation for this lemma.}
	This lemma supplies the finite-box ODE data used later by the Hadamard and
	projected-Green estimates.  Its first output is the common first-branch
	boundary slope \(-\sqrt{2/R}\,\pi/R+O_N(R^{-5/2})\).  Its second output,
	needed only in the critical \((n,l)=(2,0)\) channel, is a controlled list of
	radial spectral gaps, root-growth bounds, and endpoint amplitudes for the
	radial complement.  The proof chooses the regular tail and Volterra
	parameters, normalizes the first branch, compares the critical roots with the
	Bessel model and the sine-trial first-root bound, and proves the endpoint
	amplitude bounds in the low-frequency and bounded-away-from-zero frequency
	regimes.
	
	\noindent\emph{Zero-energy critical solution for later estimates.}
	For the later finite-box Jost and endpoint estimates, we record the positive
	zero-energy Friedrichs solution of the shifted critical equation
	\[
	\left(-\partial_s^2-\frac14\sinh^{-2}s\right)y=0.
	\]
	It is
	\[
	y(s)=\sqrt{\tanh(s/2)}
	F\!\left(\frac12,\frac12;1;\tanh^2(s/2)\right).
	\]
	Near the critical endpoint,
	\[
	y(s)=2^{-1/2}s^{1/2}+O(s^{5/2}),
	\]
	with no \(s^{1/2}\log s\) component.  At infinity, the logarithmic case
	\(c=a+b\) in the DLMF connection formula \cite[15.8.10]{DLMF}, with
	\(a=b=1/2\), gives
	\[
	y(s)=\pi^{-1}s+O(1).
	\]
	This record is not used as the first-eigenvalue branch-gap trial above; it is
	used only in the finite-box and endpoint normalization estimates below.
	
	\begin{proof}
		This lemma is proved before the projected Green estimate below; it uses only
		the regular Sturm--Liouville construction, Bessel comparison, and Volterra
		estimates for the finite-box radial equation.  No later projected endpoint
		estimate is used here.  We separate the standard regular-solution
		construction from the two local normalization calculations.  For each fixed
		\(l\le N\), subtract the common
		constant \(a_n^2\) and write the finite-box equation in the form
		\[
		\bigl(-\partial_s^2+V_l(s)\bigr)u=\rho^2u,\qquad
		u(s)\sim s^{\nu_l+1/2}\quad(s\downarrow0).
		\]
		Here
		\[
		V_l(s)=\bigl(l(l+n-2)+c_n^0\bigr)\sinh^{-2}s.
		\]
		The choice order in this lemma is: fix \(N\), choose \(S_N\), then choose
		\(\rho_0\), and finally choose \(R_N\).  On the regular tail
		\(s\ge S_N\), the potentials \(V_l\) are exponentially
		decaying, uniformly over the finite set \(l\le N\).  The zero-energy critical
		regular solution is nonconstant, so on the regular tail the zeros of its
		derivative are isolated.  Choose \(S_N\) away from those zeros and large
		enough that the Volterra operator built from the free sine kernel is a small
		contraction, for example by making
		\[
		\sup_{l\le N}\int_{S_N}^\infty (1+t)|V_l(t)|\,dt
		\]
		sufficiently small.  The bound
		\[
		|\rho^{-1}\sin\rho(y-x)|\le y-x
		\]
		also covers the limiting case \(\rho=0\), so \(S_N\) is fixed before any
		positive-frequency threshold.  After \(S_N\) is fixed, choose
		\(\rho_0>0\) by continuity of the Cauchy data on \([0,S_N]\), and then
		choose \(R_N\) large enough for the phase averages on \([S_N,R]\).  The
		Volterra construction and spectral-parameter continuity
		are the standard Sturm--Liouville construction
		\cite[Theorem 9.1]{Teschl}; the Bessel comparisons at the critical endpoint
		use the large- and small-argument estimates for \(J_0,Y_0\) recorded in
		\cite[Sections 10.2, 10.8, and 10.17]{DLMF}.
		
		For the first branch, the regular solution on the tail is a perturbation of a
		single sine mode with frequency \(\pi/R+O_N(R^{-2})\).  Its
		\(L^2(0,R)\)-normalization has amplitude
		\(\sqrt{2/R}+O_N(R^{-3/2})\).  More explicitly, if \(U_R\) is the
		unnormalized regular solution and \(A_R\) is its sine-tail amplitude on
		\([S_N,R]\), then \(A_R\asymp_N R\) and tail averaging gives
		\[
		\|U_R\|_{L^2(0,R)}^2
		=
		\frac12A_R^2R+O_N(A_R^2)+O_N(R^2)
		=
		\frac12A_R^2R\bigl(1+O_N(R^{-1})\bigr).
		\]
		Here the fixed endpoint interval contributes lower order mass, and the
		Volterra dressing on the exponentially small tail contributes the displayed
		lower-order error.  Thus the normalized tail amplitude is
		\(\sqrt{2/R}(1+O_N(R^{-1}))\).  Since the Dirichlet endpoint phase is
		\(\sin(\rho_{1,l,R}(R-s))\) up to the same Volterra dressing and
		\(\rho_{1,l,R}=\pi/R+O_N(R^{-2})\), differentiating at \(s=R\) gives
		\[
		\partial_s y_{l,R}(R)
		=
		-\sqrt{2/R}\,\pi/R+O_N(R^{-5/2}),
		\]
		uniformly for the finite set \(l\le N\).
		This includes the critical channel \((n,l)=(2,0)\): after \(S_N\) is fixed,
		the critical Friedrichs endpoint only determines the fixed Cauchy data at
		\(S_N\), while the long interval \([S_N,R]\) is governed by the same
		low-frequency Volterra tail and the same linear zero-frequency growth that
		sets the \(R^{3/2}\) normalization scale.
		
		It remains to record the critical-channel estimates.  In this paragraph
		\((n,l)=(2,0)\), \(V_0(s)=-\frac14\sinh^{-2}s\), and we use the
		\(\rho_k,\mu_k,\psi_k\) notation from the statement.  The lower root growth
		is a comparison
		with the exact critical Bessel operator
		\[
		B_R=-\frac{d^2}{ds^2}-\frac14s^{-2}
		\]
		with the same Friedrichs endpoint at \(0\) and Dirichlet endpoint at \(R\).
		Since \(\sinh s\ge s\), \(V_0(s)\ge -\frac14s^{-2}\), hence
		\[
		\rho_k^2\ge j_{0,k}^2R^{-2}.
		\]
		The first critical eigenvalue estimate proved in
		Lemma~\ref{lem:radial-branch-gap} gives
		\(\rho_1^2\le \pi^2R^{-2}\).  Since
		\(j_{0,k}=(k-\frac14)\pi+O(k^{-1})\), there are \(k_0\) and \(c>0\) such
		that \(j_{0,k}^2-\pi^2\ge ck^2\) for \(k\ge k_0\).  For the finitely many
		\(2\le k<k_0\), the strict inequality
		\(\pi^2<j_{0,2}^2\le j_{0,k}^2\) gives the same bound after decreasing \(c\).
		Therefore, after enlarging \(R_N\),
		\[
		\mu_kR^2=\rho_k^2R^2-\rho_1^2R^2
		\ge j_{0,k}^2-\pi^2\ge ck^2,\qquad k\ge2.
		\]
		The separate lower root bound follows from \(j_{0,k}\ge ck\):
		\[
		\rho_k\ge c\,k/R,\qquad k\ge2 .
		\]
		The upper root growth is the min-max half of the argument.  On
		\((S_N,R)\), take the \(k\)-dimensional space spanned by the first \(k\)
		Dirichlet sine modes, vanishing at \(S_N\) and \(R\), and extend it by zero
		to \((0,R)\).  These functions lie in the Friedrichs form domain, and on this
		space
		\[
		\frac{\int |\phi'|^2+V_0|\phi|^2}{\int |\phi|^2}
		\le C_N k^2R^{-2}
		\]
		for \(R\ge R_N\), because \(S_N\) is fixed and \(V_0\le0\) on the tail.
		Thus \(\rho_k\le Ck/R\).
		
		The endpoint amplitudes use two normalizations of the same Friedrichs-regular
		solution.  If \(\rho_k<\rho_0/2\), the bounded-frequency Volterra expansion
		and the zero-energy critical solution give
		\[
		|u_0(s,\rho_k)|\le Cs^{1/2},\qquad 0<s\le1 .
		\]
		The normalization needs the small-frequency Cauchy amplitude at the fixed
		matching point \(S_N\).  Let
		\[
		\mathcal A_0(\rho)^2
		=
		u_0(S_N,\rho)^2+\rho^{-2}\partial_su_0(S_N,\rho)^2 .
		\]
		As \(\rho\downarrow0\), the regular solution converges in Cauchy data on
		\([0,S_N]\) to the zero-energy Friedrichs solution.  The matching point
		\(S_N\) is chosen on the tail where this zero-energy solution has nonzero
		slope.  Hence, after decreasing \(\rho_0\) if needed,
		\[
		\rho^2\mathcal A_0(\rho)^2
		\longrightarrow |\partial_su_0(S_N,0)|^2>0,
		\qquad
		\mathcal A_0(\rho)\ge c_N\rho^{-1},
		\qquad 0<\rho<\rho_0/2 .
		\]
		Equivalently, after using the Dirichlet condition to write the tail phase
		from \(R\), the sine tail has amplitude comparable to \(\rho_k^{-1}\).  Write
		\(\zeta_k=\rho_kR\).  The lower root bound gives
		\(\zeta_k\ge c k\), hence \(\zeta_k\ge c_*>0\) for \(k\ge2\).  If
		\(\zeta_k\) is bounded, the rescaled sine-tail integral has a positive lower
		bound by compactness on \(\zeta_k\in[c_*,K]\); if \(\zeta_k>K\), the same
		lower bound follows from averaging over many half-periods.  Equivalently, the
		tail Volterra solution is a perturbed sinusoid of amplitude comparable to
		\(\mathcal A_0(\rho_k)\), and the same reverse-triangle estimate used below
		absorbs the relative sup-norm Volterra error into the principal sinusoidal
		term.  The preceding compactness/phase-averaging argument gives
		\[
		\int_{S_N}^{R}|u_0(s,\rho_k)|^2\,ds
		\ge c(R-S_N)\mathcal A_0(\rho_k)^2
		\ge c\rho_k^{-2}R .
		\]
		Enlarging \(K\) and then \(R_N\) gives, uniformly over all low roots,
		\[
		\|u_0(\cdot,\rho_k)\|_{L^2(0,R)}\ge c\rho_k^{-1}R^{1/2}.
		\]
		Therefore the normalized eigenfunction satisfies
		\[
		|\psi_k(s)|\le C\rho_kR^{-1/2}s^{1/2}
		\le CkR^{-3/2}s^{1/2}.
		\]
		For \(\rho_k\ge\rho_0/2\), write
		\[
		V_0(s)=-\frac14s^{-2}+W(s),
		\]
		where \(W\) is bounded on \((0,S_N]\).  We first record the a priori
		critical Bessel-Volterra bound, before using it in the Cauchy-data lower
		estimate.  Let \(u_B(s,\rho)=s^{1/2}J_0(\rho s)\), and write the regular
		solution on \((0,S_N]\) as
		\[
		u_0(s,\rho)=u_B(s,\rho)+\int_0^sK_\rho(s,t)W(t)u_0(t,\rho)\,dt,
		\]
		where \(K_\rho\) is the Green kernel built from
		\(s^{1/2}J_0(\rho s)\) and \(s^{1/2}Y_0(\rho s)\).  The small- and
		large-argument Bessel bounds in DLMF Sections 10.8 and 10.17, together with
		the Wronskian normalization in DLMF Section 10.5, give
		\[
		|K_\rho(s,t)|
		\le C_N
		\bigl(s^{1/2}\wedge\rho^{-1/2}\bigr)
		\bigl(t^{1/2}\wedge\rho^{-1/2}\bigr)
		\bigl(1+\mathbf 1_{\{\rho t\le1\}}|\log(\rho t)|\bigr),
		\qquad 0<t\le s\le S_N .
		\]
		With the weighted norm
		\[
		\|u\|_\rho
		=
		\sup_{0<s\le S_N}
		{|u(s)|\over s^{1/2}\wedge\rho^{-1/2}},
		\]
		and \(m_\rho(t)=t^{1/2}\wedge\rho^{-1/2}\), the kernel estimate gives
		\[
		\int_0^{S_N}
		m_\rho(t)^2
		\bigl(1+\mathbf 1_{\{\rho t\le1\}}|\log(\rho t)|\bigr)\,dt
		\le C(S_N,\rho_0),\qquad \rho\ge\rho_0/2.
		\]
		Indeed, the part \(0<t\le\rho^{-1}\) is \(O(\rho^{-2})\) after the change of
		variables \(y=\rho t\), while the remaining part is bounded by
		\(S_N\rho^{-1}\).  The Volterra operator is triangular, so its iterated
		norms are bounded by the usual factorial estimate with constants depending
		only on \(S_N\), \(\rho_0\), and \(\|W\|_\infty\).  Picard iteration gives,
		uniformly in \(\rho\ge\rho_0/2\),
		\[
		|u_0(s,\rho)|\le
		\begin{cases}
			Cs^{1/2}, & \rho s\le1,\\
			C\rho^{-1/2}, & \rho s\ge1 .
		\end{cases}
		\]
		This a priori bound for the unnormalized regular solution is independent of
		the normalization estimate below.
		We next make the Cauchy-data lower bound explicit.  At the fixed matching
		point \(S_N\), set
		\[
		\mathcal A(\rho)^2
		=
		u_0(S_N,\rho)^2+\rho^{-2}\partial_su_0(S_N,\rho)^2 .
		\]
		For the model solution \(u_B(s,\rho)=s^{1/2}J_0(\rho s)\),
		\[
		\rho^{-1}\partial_su_B(S_N,\rho)
		=
		\frac{1}{2\rho\sqrt{S_N}}J_0(\rho S_N)
		-\sqrt{S_N}\,J_1(\rho S_N).
		\]
		The large-argument expansions of \(J_0\) and \(J_1\), with
		\(x=\rho S_N\) and \(S_N\) fixed, give
		\[
		\begin{aligned}
			S_NJ_0(x)^2
			&+
			\left(
			\frac{1}{2\rho\sqrt{S_N}}J_0(x)
			-\sqrt{S_N}\,J_1(x)
			\right)^2  \\
			&=\frac{2}{\pi\rho}+O_N(\rho^{-2}).
		\end{aligned}
		\]
		Indeed \(J_0(x)=\sqrt{2/(\pi x)}\cos(x-\pi/4)+O(x^{-3/2})\) and
		\(J_1(x)=\sqrt{2/(\pi x)}\sin(x-\pi/4)+O(x^{-3/2})\), so the two leading
		Cauchy components are phase-shifted and their squares add to
		\(2/(\pi\rho)\); in particular they cannot vanish simultaneously.  The
		Bessel Volterra equation for the bounded residual \(W\) perturbs this scaled
		Cauchy vector by only
		\(O_N(\rho^{-3/2})\): splitting the integral at \(t=\rho^{-1}\), the collar
		\(0<t<\rho^{-1}\) uses
		\[
		|K_\rho(S_N,t)|+\rho^{-1}|\partial_sK_\rho(S_N,t)|
		\le C_N\rho^{-1/2}t^{1/2}(1+|\log(\rho t)|),
		\]
		so the logarithmic small-argument \(Y_0\) bound and the regular estimate
		\(u_0(t,\rho)=O(t^{1/2})\) give an \(O_N(\rho^{-5/2})\) contribution,
		while \(\rho^{-1}\le t\le S_N\) contributes \(O_N(\rho^{-3/2})\) by the
		large-argument Bessel bounds and the compact-collar high-window bound
		\(u_0(t,\rho)=O_N(\rho^{-1/2})\).  Therefore
		\[
		\mathcal A(\rho)\ge c\rho^{-1/2}
		\]
		for all sufficiently large \(\rho\).  On the remaining compact interval
		\(\rho_0/2\le\rho\le\rho_1\), the same lower bound follows after decreasing
		\(c\), because the regular solution depends continuously on \(\rho\) and
		vanishing of both Cauchy data at \(S_N\) would force the solution to be
		identically zero.
		
		The free-region Volterra construction on the fixed tail transfers this
		amplitude to
		\[
		u_0(s,\rho)
		=
		\mathcal A(\rho)\cos(\rho(s-S_N)-\theta(\rho))+E(s,\rho),
		\qquad
		\sup_{S_N\le s\le R}|E(s,\rho)|
		\le\epsilon\,\mathcal A(\rho),
		\]
		where \(\epsilon\in(0,1/8)\) is fixed by the original choice of \(S_N\);
		the tail Volterra operator built from the free sine kernel is an
		\(\epsilon\)-contraction in the sup norm, uniformly for
		\(\rho\ge\rho_0/2\).  Hence
		\[
		\int_{S_N}^{R}|E(s,\rho)|^2\,ds
		\le \epsilon^2\mathcal A(\rho)^2(R-S_N).
		\]
		The reverse triangle inequality in \(L^2(S_N,R)\) gives
		\[
		\begin{aligned}
			\|u_0(\cdot,\rho)\|_{L^2(S_N,R)}
			&\ge
			\mathcal A(\rho)
			\left(\int_{S_N}^{R}
			\cos^2(\rho(s-S_N)-\theta(\rho))\,ds
			\right)^{1/2}  \\
			&\qquad
			-\left(\int_{S_N}^{R}|E(s,\rho)|^2\,ds\right)^{1/2}.
		\end{aligned}
		\]
		For \(\rho=\rho_k\ge\rho_0/2\) and \(R\ge R_N\),
		\[
		\int_{S_N}^R
		\cos^2(\rho_k(s-S_N)-\theta(\rho_k))\,ds
		\ge \frac{R-S_N}{2}-\frac{1}{2\rho_k}
		\ge \frac{R}{8},
		\]
		after enlarging \(R_N\) so that \(R-S_N\ge R/2\) and
		\((2\rho_k)^{-1}\le R/8\).  Since \(R-S_N\le R\), the preceding reverse
		triangle inequality gives
		\[
		\|u_0(\cdot,\rho_k)\|_{L^2(S_N,R)}
		\ge
		\mathcal A(\rho_k)R^{1/2}
		\left(\frac{1}{\sqrt8}-\epsilon\right)
		\ge c\mathcal A(\rho_k)R^{1/2}.
		\]
		Here \(c>0\) because \(\epsilon<1/8<1/\sqrt8\).
		Since \(\mathcal A(\rho_k)^2\ge c/\rho_k\), this gives
		\[
		\|u_0(\cdot,\rho_k)\|_{L^2(0,R)}^2\ge cR/\rho_k .
		\]
		Normalizing and using the upper root bound \(\rho_k\le Ck/R\) gives
		\[
		|\psi_k(s)|\le C\rho_k^{1/2}R^{-1/2}s^{1/2}
		\le Ck^{1/2}R^{-1}s^{1/2}
		\quad(\rho_ks\le1),
		\]
		while the large-argument Bessel estimate gives
		\[
		|\psi_k(s)|\le CR^{-1/2}\quad(\rho_ks\ge1).
		\]
		Taking the maximum of the finitely many thresholds and the minimum of the
		positive constants gives the stated uniform constants.
	\end{proof}
	
	\paragraph{Finite-\(N\) endpoint estimates.}
	\begin{lemma}[Finite-\(N\) projected Green and Jost estimates]
		\label{lem:critical-endpoint-jost-green}
		Fix \(1/2<\delta_{\rm cut}<2/3\) and the angular cutoff \(N\), and set
		\(\eps_R=R^{-\delta_{\rm cut}}\).  For every \(0\le l\le N\), the projected
		radial complement of \(H_{l,R}\) satisfies the endpoint collar estimates
		below, with constants and \(o_R(1)\) rates allowed to depend on the fixed
		cutoff \(N\) and on \(\delta_{\rm cut}\).  All radial functions in this lemma
		are in the flattened \(L^2(0,R;ds)\) normalization, and all projections are
		orthogonal for that inner product.  It is enough to display the
		critical radial channel.  In the noncritical channels the inverse-square
		coefficient is strictly above the Friedrichs critical value, the endpoint
		powers are stronger, and the radial branch gap from
		Lemma~\ref{lem:radial-branch-gap}, established independently before this
		projected Green estimate, supplies the needed complement coercivity.  In the
		critical channel \((n,l)=(2,0)\), let
		\(\lambda_{k,0,R}\) and \(\psi_k\) denote the
		eigenvalues and normalized eigenfunctions of \(H_{0,R}\), put
		\[
		\mu_k=\lambda_{k,0,R}-\lambda_{1,0,R},
		\]
		and write \(\rho_k\) for the corresponding finite-box roots.  There are
		constants \(\rho_0>0\), \(R_N<\infty\), and \(0<c_0<C_0<\infty\), depending only
		on the fixed cutoff \(N\), such that for all \(R\ge R_N\)
		\[
		\mu_k\ge c_0k^2R^{-2},\qquad
		c_0\,\frac{k}{R}\le \rho_k\le C_0\,\frac{k}{R},\qquad k\ge2.
		\]
		Moreover the endpoint amplitudes satisfy
		\[
		\begin{array}{ll}
			|\psi_k(s)|\le CkR^{-3/2}s^{1/2},
			& \rho_k<\rho_0/2,\\[3pt]
			|\psi_k(s)|\le Ck^{1/2}R^{-1}s^{1/2},
			& \rho_k\ge\rho_0/2,\ \rho_k s\le1,\\[3pt]
			|\psi_k(s)|\le CR^{-1/2},
			& \rho_k s\ge1,
		\end{array}
		\qquad 0<s\le1,
		\]
		and the projected Green diagonal
		\[
		G_R(s,s)=\sum_{k\ge2}\frac{|\psi_k(s)|^2}{\mu_k}
		\]
		satisfies
		\[
		\sup_{\eps_R\le s\le1}G_R(s,s)\le C .
		\]
		If \(Q_{0,R}\) is the projection away from the first critical radial branch
		and
		\[
		k[q]=
		\langle q,Q_{0,R}(H_{0,R}-\lambda_{1,0,R})Q_{0,R}q\rangle ,
		\]
		then every \(q=Q_{0,R}q\) in the projected critical radial complement
		satisfies
		\[
		R^{-1}\int_{\eps_R}^{1}s^{-2}|q(s)|^2\,ds
		\le o_R(1)k[q],
		\qquad
		R^{-2}\int_{\eps_R}^{1}|D_sq|^2\,ds
		\le o_R(1)k[q],
		\]
		where the \(o_R(1)\) rate may depend on the fixed collar exponent
		\(\delta_{\rm cut}\) and on the fixed cutoff \(N\).  The smallness comes
		from \(\delta_{\rm cut}<1\), in particular from
		\(R^{-1}\eps_R^{-1}=R^{\delta_{\rm cut}-1}\to0\).
	\end{lemma}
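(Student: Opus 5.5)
The root bounds and the three endpoint amplitude regimes are exactly the critical-channel conclusions of Lemma~\ref{lem:finite-box-jost-data}, with the same \(\rho_0\), \(R_N\), \(c_0\), \(C_0\). So I will quote them rather than reprove them. The noncritical channels \(l\le N\) go through with the same argument and better constants. There the Friedrichs endpoint behaviour is \(s^{\nu_l+1/2}\) with \(\nu_l>0\), which gives stronger small-\(s\) amplitude bounds, and complement coercivity comes from Lemma~\ref{lem:radial-branch-gap}. The new content of the lemma is therefore the projected Green diagonal bound and the two collar estimates, and I treat only the critical channel \((n,l)=(2,0)\).

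\emph{Green diagonal.} Fix \(s\in[\eps_R,1]\) and split the sum over \(k\ge2\) into three regimes, using \(\mu_k\ge c_0k^2R^{-2}\) throughout.
\begin{itemize}
\item Low roots \(\rho_k<\rho_0/2\): then \(k\lesssim R\), and \(|\psi_k(s)|^2/\mu_k\le C k^2R^{-3}s/(k^2R^{-2})=CsR^{-1}\). Summing over \(k\lesssim R\) gives \(O(s)\le C\).
\item Higher roots with \(\rho_ks\le1\), i.e.\ \(k\lesssim R/s\): the bound \(CkR^{-2}s/(k^2R^{-2})=Cs/k\) sums to \(Cs\log(R/s)\le C\,s\log(1/s)+Cs\log R\).
\item Oscillatory roots with \(\rho_ks\ge1\): the bound \(CR^{-1}/(k^2R^{-2})=CR/k^2\), summed over \(k\gtrsim R/s\), gives \(O(s)\).
\end{itemize}
The middle regime leaves a factor \(s\log R\), which I expect is only an artifact of the crude amplitude \(k^{1/2}R^{-1}s^{1/2}\). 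To remove it I would refine the bound to \(|\psi_k(s)|\le C\rho_k^{1/2}R^{-1/2}\,s^{1/2}|J_0(\rho_ks)|\)-type control. Concretely, I would use the Bessel–Volterra representation from the proof of Lemma~\ref{lem:finite-box-jost-data} together with \(\mathcal A(\rho)\ge c\rho^{-1/2}\). Then for \(\rho_ks\le1\) the amplitude is \(\lesssim (k/R)^{1/2}R^{-1/2}s^{1/2}\), so each term is \(\lesssim s/k\) only for \(k\le R/s\), and \(\sum_{k\le R/s}1/k\) must be weighed against the factor \(s\). If the logarithm cannot be removed this way, I would fall back on an explicit resolvent comparison: \(G_R(s,s)\) is the diagonal of the reduced resolvent of \(H_{0,R}-\lambda_{1,0,R}\) on \(Q_{0,R}L^2\). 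I would compare it with the critical Bessel Green function \(\pi s^{1/2}t^{1/2}J_0Y_0\) at energy \(0\), which is \(O(s|\log s|)\) on \([\eps_R,1]\), and treat the rank-one projection correction through \(\|\psi_1\|_{L^\infty(0,1)}^2/\text{gap}=O(1)\).

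\emph{Collar estimates.} Expand \(q=\sum_{k\ge2}q_k\psi_k\), so that \(k[q]=\sum\mu_k|q_k|^2\). Cauchy–Schwarz gives \(|q(s)|^2\le G_R(s,s)\,k[q]\), and hence
\[
R^{-1}\int_{\eps_R}^1 s^{-2}|q|^2\,ds\le CR^{-1}\eps_R^{-1}k[q]=CR^{\delta_{\rm cut}-1}k[q],
\]
which is \(o_R(1)k[q]\).

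For the derivative term I will run the same argument with \(D_s\psi_k\) in place of \(\psi_k\). I expect this to be the main obstacle, because there is no amplitude list for derivatives yet. I would obtain pointwise derivative bounds on \([\eps_R,1]\) by differentiating the Bessel–Volterra representation, giving \(|D_s\psi_k(s)|\lesssim(\text{amplitude})\cdot(s^{-1}+\rho_k)\). Summing these bounds regime by regime as above yields \(\int_{\eps_R}^1\sum_k|D_s\psi_k|^2/\mu_k\,ds\lesssim\eps_R^{-1}+R\). Multiplying by \(R^{-2}\) then gives \(o_R(1)\) since \(\delta_{\rm cut}<1\).

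If these pointwise derivative bounds prove awkward, I would instead use the critical ground-state identity
\[
\int|q'|^2-\tfrac14\int s^{-2}|q|^2=\int s\,\bigl|(s^{-1/2}q)'\bigr|^2 ,
\]
with a cutoff supported in \([\eps_R/2,2]\). Its commutator terms are controlled by the pointwise bound \(|q|^2\le G_R\,k[q]\) already established, and they contribute \(O(\eps_R^{-1})k[q]\), which again becomes \(o_R(1)k[q]\) after multiplying by \(R^{-2}\).
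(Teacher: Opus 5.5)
Your treatment of the root/amplitude input, quoted from Lemma~\ref{lem:finite-box-jost-data}, matches the paper, and so does your collar mass bound via \(|q(s)|^2\le G_R(s,s)\,k[q]\). Two steps, however, do not go through as proposed. First, you never establish \(\sup_{\varepsilon_R\le s\le1}G_R(s,s)\le C\). The factor \(s\log R\) in your middle regime comes from the summation range, not from the amplitude bound. In that regime \(\rho_k\ge\rho_0/2\), and the upper root bound \(\rho_k\le C_0k/R\) then forces \(k\ge\rho_0R/(2C_0)\). So the harmonic sum runs only over \(cR\le k\le CR/s\) and gives \(Cs(1+\log(C/s))\le C\); this is exactly the paper's seam estimate. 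Your proposed repairs do not remove the logarithm. The ``refined'' amplitude \((k/R)^{1/2}R^{-1/2}s^{1/2}\) is the same bound you started from. The Bessel comparison is too lossy: from \(\mu_k\ge(1-\pi^2/j_{0,2}^2)\rho_k^2\) and \(\widehat H_R\ge B_R\) you get \(G_R(s,s)\le C\,B_R^{-1}(s,s)\). The zero-energy Friedrichs--Dirichlet Green function of \(B_R\) on \((0,R)\) is \(\sqrt{st}\,\log(R/\max\{s,t\})\), whose diagonal \(s\log(R/s)\) is precisely the bound you wanted to improve. The Bessel model has amplitude \(k^{1/2}R^{-1}s^{1/2}\) for every \(k\). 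The better low-frequency amplitude \(kR^{-3/2}s^{1/2}\) comes from the linear growth of the \(\sinh\) zero-energy solution, which the model does not see. Your \(s\log(R/s)\) bound would still give the collar mass estimate, with \(R^{-1}\log^2R\) in place of \(R^{\delta_{\rm cut}-1}\), but not the stated uniform diagonal bound.

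Second, your primary route to the derivative estimate fails outright. For \(\rho_ks\ge1\) your own bounds give \(|D_s\psi_k(s)|^2/\mu_k\approx R^{-1}\rho_k^2/\mu_k\approx R^{-1}\), independent of \(k\). Hence \(\sum_k|D_s\psi_k(s)|^2/\mu_k=+\infty\) for every \(s\); formally, \(\partial_s\partial_t\) of the Green kernel carries a diagonal delta. No pointwise Cauchy--Schwarz bound on \(D_sq\) by \(k[q]\) exists, and the claimed bound \(\varepsilon_R^{-1}+R\) is false. Your fallback, the ground-state identity, is the right idea and is essentially the paper's closed \(h\)-transform argument: the paper uses the exact zero-energy solution \(y\) of \(-\partial_s^2-\frac14\sinh^{-2}s\), and you use \(s^{1/2}\). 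What your sketch omits is the spectral shift. The identity controls the form at energy \(a_2^2\), while \(k[q]\) is shifted by \(\lambda_{1,0,R}=a_2^2+\rho_1^2\). You must therefore absorb \(\rho_1^2\|q\|^2\le\pi^2R^{-2}\|q\|^2\) using the branch gap \(\|q\|^2\le R^2k[q]/c_{\rm rad}^0(n)\) from Lemma~\ref{lem:radial-branch-gap}. Once this is done, the cutoff and its commutators are unnecessary. Since \(\sinh s\ge s\), in the closed Friedrichs-form sense \(\int_0^Rs|(s^{-1/2}q)'|^2\,ds\le k[q]+\rho_1^2\|q\|^2\le Ck[q]\). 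On \([\varepsilon_R,1]\) you then have \(|q'|^2\le2s|(s^{-1/2}q)'|^2+\tfrac12s^{-2}|q|^2\), and together with the mass estimate this gives \(R^{-2}\int_{\varepsilon_R}^1|D_sq|^2=o_R(1)k[q]\). Your localized version would additionally need an IMS-type splitting and a lower bound for the discarded piece. It would also need pointwise control of \(q\) on \([1,2]\), which lies outside the range where you bounded \(G_R\).
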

	
	\begin{proof}
		We begin with the finite-box input, then derive the projected Green estimates
		and the endpoint collar bounds.
		
		\emph{Finite-box Jost supply.}  In the critical channel write
		\[
		\rho_k=\rho_{k,0,R},\qquad
		\psi_k=\psi_{k,0,R},\qquad
		\mu_k=\lambda_{k,0,R}-\lambda_{1,0,R}.
		\]
		Here \(\lambda_{k,l,R}\) and \(\psi_{k,l,R}\) denote the eigenvalues and
		normalized eigenfunctions of the separated radial operator \(H_{l,R}\) on
		\((0,R)\), so that \(\lambda_{1,l,R}=e_{l,R}\) and
		\(\psi_{1,l,R}=y_{l,R}\).  Let \(u_l(s,\rho)\) be the Friedrichs-regular
		solution of
		\[
		\bigl(-\partial_s^2+V_l(s)\bigr)u=\rho^2u,\qquad
		u_l(s,\rho)\sim s^{\nu_l+1/2}\quad (s\downarrow0),
		\]
		and write \(u_0\) for the critical channel.
		After subtracting the common constant \(a_n^2\), the finite-box equation has
		potential
		\[
		V_l(s)=\bigl(l(l+n-2)+c_n^0\bigr)\sinh^{-2}s ;
		\]
		in the critical channel \((n,l)=(2,0)\), this is
		\(V_0(s)=-\frac14\sinh^{-2}s\).
		For each fixed \(l\le N\) the same construction is run with \(V_l\) in place
		of \(V_0\).  Since only finitely many degrees \(l\le N\) occur, the common
		constants below are obtained by taking the maximum of the resulting
		thresholds and the minimum of the resulting positive lower constants.  The
		displayed argument is written in the critical channel, where the logarithmic
		endpoint companion is the binding case.
		Lemma~\ref{lem:finite-box-jost-data} supplies constants
		\(\rho_0>0\), \(R_N<\infty\), and \(0<c_0<C_0<\infty\), depending only on the
		fixed angular cutoff, such that for \(R\ge R_N\) and \(k\ge2\),
		\[
		\mu_k\ge c_0k^2R^{-2},\qquad
		c_0\,\frac{k}{R}\le \rho_k\le C_0\,\frac{k}{R},
		\]
		together with the three endpoint amplitude regimes
		\[
		\begin{array}{ll}
			|\psi_k(s)|\le CkR^{-3/2}s^{1/2},
			& \rho_k<\rho_0/2,\\[3pt]
			|\psi_k(s)|\le Ck^{1/2}R^{-1}s^{1/2},
			& \rho_k\ge\rho_0/2,\ \rho_k s\le1,\\[3pt]
			|\psi_k(s)|\le CR^{-1/2},
			& \rho_k s\ge1,
		\end{array}
		\qquad 0<s\le1 .
		\]
		These finite-box estimates are the only Jost input used in the rest of this
		lemma.  The choice order \(S_N\to\rho_0\to R_N\), the high-window Cauchy-data
		normalization, and the first-branch boundary slope have all been packaged in
		Lemma~\ref{lem:finite-box-jost-data}; the projected Green argument below
		uses only the displayed root-growth and endpoint-amplitude consequences.
		
		\emph{Projected Green diagonal.}  If \(Q_{0,R}\) denotes projection away from
		the first critical radial branch, set
		\[
		k[q]=\langle q,Q_{0,R}(H_{0,R}-\lambda_{1,0,R})Q_{0,R}q\rangle .
		\]
		The projection is essential: without removing the first critical branch, the
		right side could vanish while the endpoint mass is positive.  For
		\[
		G_R(s,s)=\sum_{k\ge2}\frac{|\psi_k(s)|^2}{\mu_k},
		\]
		Cauchy--Schwarz in the projected spectral expansion gives
		\(|q(s)|^2\le k[q]G_R(s,s)\): indeed, if
		\(q=\sum_{k\ge2}a_k\psi_k\), then
		\[
		|q(s)|^2
		\le \left(\sum_{k\ge2}\mu_k|a_k|^2\right)
		\left(\sum_{k\ge2}\frac{|\psi_k(s)|^2}{\mu_k}\right)
		=k[q]G_R(s,s).
		\]
		The diagonal bound
		\(\sup_{\eps_R\le s\le1}G_R(s,s)\le C\) is proved by splitting the sum
		according to the global frequency threshold \(\rho_0/2\) and the local collar
		product \(\rho_k s\):
		\[
		G_R(s,s)=\Sigma_{\rm low}(s)+\Sigma_{\rm seam}(s)+\Sigma_{\rm high}(s),
		\qquad \eps_R\le s\le1 .
		\]
		The index enlargements used below are
		\[
		\begin{array}{c|c}
			\text{regime} & \text{consequence of the root bounds} \\ \hline
			\rho_k<\rho_0/2 & k\le C R,\\
			\rho_k\ge\rho_0/2,\ \rho_k s\le1 & cR\le k\le C R/s,\\
			\rho_k s\ge1 & k\ge c R/s .
		\end{array}
		\]
		Let \(k_\ast\) be the first index with \(\rho_{k_\ast}\ge\rho_0/2\).  The
		root bounds give \(c_0R\le k_\ast\le K_{\rm root}R\), and the upper root-growth bound
		\(\rho_k\le Ck/R\) is used only for \(k\ge k_\ast\).  For genuinely low
		roots \(\rho_k<\rho_0/2\), the lower root-growth bound
		\(\rho_k\ge ck/R\) gives \(k\le \rho_0R/(2c)\), hence \(O(R)\) such indices,
		and
		\[
		\Sigma_{\rm low}(s)
		\le \sum_{\rho_k<\rho_0/2}
		\frac{Ck^2R^{-3}s}{ck^2R^{-2}}
		\le \sum_{\rho_k<\rho_0/2}CR^{-1}s
		\le Cs .
		\]
		For seam roots \(\rho_k\ge\rho_0/2\) and \(\rho_k s\le1\), the lower
		root-growth bound gives \(k\le C_1R/s\).  The lower endpoint
		\(k\ge c_0R\) follows from \(\rho_k\ge\rho_0/2\) and the upper root-growth
		bound \(\rho_k\le Ck/R\).  Thus the actual seam index set is contained in
		\(c_0R\le k\le C_1R/s\).  The following sum is therefore over a superset of
		the actual seam indices; indices in this range with \(\rho_k s>1\) are not in
		\(\Sigma_{\rm seam}\) and are handled by \(\Sigma_{\rm high}\).  Hence
		\[
		\Sigma_{\rm seam}(s)
		\le \sum_{k_\ast\le k\le C_1R/s}
		\frac{CkR^{-2}s}{ck^2R^{-2}}
		\le C_2s\sum_{c_0R\le k\le C_1R/s}\frac1k
		\le C_3s\bigl(1+\log(C_4/s)\bigr)\le C_5 .
		\]
		The second inequality only enlarges the index range, using
		\(k_\ast\ge c_0R\).  The last inequality uses the elementary bound
		\(\sup_{0<s\le1}s\log(1/s)=e^{-1}\).
		For high local-Bessel roots \(\rho_k s\ge1\), the upper root-growth bound
		\(\rho_k\le Ck/R\) gives \(k\ge cR/s\), so the actual high index set is
		contained in that tail.  Therefore
		\[
		\Sigma_{\rm high}(s)
		\le \sum_{k\ge cR/s}
		\frac{CR^{-1}}{ck^2R^{-2}}
		\le CR\sum_{k\ge cR/s}\frac1{k^2}
		\le Cs .
		\]
		In fact these three estimates give the sharper bound
		\[
		G_R(s,s)\le Cs\bigl(1+\log(C/s)\bigr),\qquad 0<s\le1,
		\]
		and in particular the projected diagonal is uniformly bounded on
		\([\eps_R,1]\).  The point of the genuinely low/seam/high local-Bessel split
		is to keep distinct the global frequency dichotomy controlling box
		normalization from the local Bessel dichotomy controlling endpoint size.
		On the transition layer \(I_R=[\eps_R,2\eps_R]\), the same bound gives
		\[
		\int_{I_R}|q|^2\,ds
		\le C\eps_R^2\log(C/\eps_R)\,k[q]
		\]
		for every projected vector \(q\).  This is the estimate used when
		\(G_s'=O(R^{-1}\eps_R^{-2})\) is routed in the transition coefficient class.
		
		\emph{Endpoint form controls.}  For the collar scale
		\(\eps_R=R^{-\delta_{\rm cut}}\), \(1/2<\delta_{\rm cut}<2/3\), the
		cruder uniform diagonal bound gives the weighted collar mass estimate
		\[
		R^{-1}\int_{\eps_R}^{1}s^{-2}|q(s)|^2\,ds
		\le C R^{-1}\eps_R^{-1}k[q]
		=O(R^{\delta_{\rm cut}-1})k[q]
		=o_R(1)k[q].
		\]
		Here the last equality uses \(\delta_{\rm cut}<1\).
		The derivative estimate is not obtained by differentiating a singular
		projected Green kernel.  On the endpoint collar write \(q=yv\), where \(y\)
		is the positive zero-energy critical solution for the shifted operator
		\(\widehat H=H_{0,R}-a_2^2\); equivalently \(H_{0,R}y=a_2^2y\).  The local
		ground-state identity needed here is
		\[
		\langle yv,(H_{0,R}-\lambda_{1,0,R})yv\rangle
		=
		\int y^2|v'|^2
		-(\lambda_{1,0,R}-a_2^2)\|yv\|^2 .
		\]
		For \(v\in C_c^\infty((0,R))\), expand \(q=yv\):
		\[
		|q'|^2+V_0q^2
		=
		y^2|v'|^2+y'^2v^2+2yy'vv'+V_0y^2v^2 .
		\]
		Integrating \(2yy'vv'=yy'(v^2)'\) by parts gives the bulk contribution
		\(-(yy')'v^2=-(y'^2+yy'')v^2\).  Since
		\(-y''+V_0y=a_2^2y\), the remaining zero-order terms collapse to
		\(a_2^2y^2v^2\), and no endpoint contribution appears because \(v\) is
		compactly supported.  Subtracting \(\lambda_{1,0,R}\|yv\|^2\) gives the
		displayed identity.  For finite spectral sums and then for a general
		projected complement vector, this identity is closed in the critical
		Friedrichs form sense, not by evaluating a classical endpoint trace.  Insert
		the standard logarithmic cutoff at the critical endpoint \(s=0\), use the
		Dirichlet trace at \(R\), and let the cutoff tend to one.
		The cutoff cost tends to zero because \(y^2\sim s\) and the Friedrichs
		endpoint excludes the \(s^{1/2}\log s\) branch.  Thus no singular-endpoint
		boundary term is produced in the closed-form identity.  Keeping the closure
		inside the projected spectral subspace, write
		\[
		q=\sum_{k\ge2}a_k\psi_k,\qquad
		q^{(M)}=\sum_{2\le k\le M}a_k\psi_k .
		\]
		The finite sums \(q^{(M)}\) lie in the Friedrichs operator domain and in
		\(\operatorname{Ran}Q_{0,R}\).  The compact-core identity, closed in the
		operator graph norm on these finite sums, gives
		\[
		\int y^2\left|\left(q^{(M)}/y\right)'\right|^2
		=
		k[q^{(M)}]+(\lambda_{1,0,R}-a_2^2)\|q^{(M)}\|^2 .
		\]
		Since \(q^{(M)}=Q_{0,R}q^{(M)}\) is projected off the first critical branch,
		the branch gap
		\(\lambda_{2,0,R}-\lambda_{1,0,R}\ge c_{\rm rad}^0R^{-2}\) from
		Lemma~\ref{lem:radial-branch-gap} gives the same inequality with the smaller
		post-reservation constant \(c_{\rm rad}\le c_{\rm rad}^0\).  Thus, through
		the spectral expansion,
		\[
		k[q^{(M)}]\ge c_{\rm rad}R^{-2}\|q^{(M)}\|^2,
		\qquad
		\|q^{(M)}\|^2\le c_{\rm rad}^{-1}R^2k[q^{(M)}].
		\]
		Together with \(k[q^{(M)}]\to k[q]\), \(q^{(M)}\to q\) in \(L^2\), and the
		critical first-eigenvalue estimate
		\(\lambda_{1,0,R}-a_2^2=O(R^{-2})\), this makes the right side bounded by
		\(Ck[q]\) and Cauchy.  This defines the closed
		\(h\)-transform derivative for \(q=yv\) and yields
		\[
		\int y^2|v'|^2\le Ck[q].
		\]
		Thus no pointwise singular endpoint trace is evaluated for the limiting
		function, and the closure uses the projected form norm for which \(k[\cdot]\)
		is the exact energy.  The shifted critical endpoint expansion gives
		\(y(s)=c_0s^{1/2}+O(s^{5/2})\) and
		\(|y'/y|\le Cs^{-1}\), while the conjugated radial derivative \(D_s\) has its
		Jacobian/log-derivative coefficient absorbed into the same \(s^{-2}|q|^2\)
		term.  Hence
		\[
		|D_sq|^2\le C y^2|v'|^2+C s^{-2}|q|^2
		\]
		on the fixed collar, and therefore
		\[
		R^{-2}\int_{\eps_R}^{1}|D_sq|^2\,ds
		\le CR^{-2}k[q]
		+CR^{-1}
		\left(R^{-1}\int_{\eps_R}^{1}s^{-2}|q|^2\,ds\right)
		=o_R(1)k[q].
		\]
		On the regular tail \(s\ge1\), the coefficient-error terms carry an
		exponentially decaying factor, bounded by \(Ce^{-2s}(1+s)^2\).  The same
		projected Jost split, now in the bounded-potential region, gives the
		corresponding exponentially weighted tail estimates beyond the endpoint
		collar.  Thus this local estimate uses a projected Green diagonal and a closed
		\(h\)-transform, not a critical Hardy inequality.
	\end{proof}
	
	\begin{lemma}[Finite first-branch Hadamard boundary-flux matrix]
		\label{lem:finite-first-branch-hadamard}
		Fix the dimension, the height and Lipschitz bounds \(C,L<\infty\), the
		collar exponent \(\delta_{\rm cut}\in(1/2,2/3)\), and the angular cutoff
		\(N\).  Set \(\eps_R=R^{-\delta_{\rm cut}}\).  For every smooth \(h\) with
		\(0\le h\le C\) and \(\Lip(h)\le L\), use the cutoff flattening field
		\(F_h(s,\theta)=s-\beta_R(s)h(\theta)\), where \(\beta_R=0\) near the
		origin, \(\beta_R=s/R\) on the regular bulk \(s\ge2\eps_R\),
		\(\beta_R(R)=1\), and
		\(\beta_R'=O(R^{-1})\), \(\beta_R''=O(R^{-1}\eps_R^{-1})\) on the
		transition layer.  For \(l,l'\le N\), let \(M_{ll',R}\) denote the radial
		coefficient in the polarized first variation of the shifted physical
		Dirichlet form on the finite first branches \(u_{l,R}\) and \(u_{l',R}\),
		after separating the angular factor
		\(\int_{\mathbb S^{n-1}}hY_l\overline{Y_{l'}}\,d\theta\).  Thus
		\(M_{ll',R}\) is the radial-only scalar multiplying this angular integral;
		the angular integral is not included in \(M_{ll',R}\).  It records only the
		Rellich normal-derivative boundary flux contribution.  The skew-adjointness
		boundary formula for \(A_X\), whose boundary integral contains
		\(u\overline v\) rather than \(\partial_\nu u\,\partial_\nu v\), vanishes for
		Dirichlet eigenfunctions and does not contribute to \(M_{ll',R}\).
		Then, uniformly for \(l,l'\le N\) and uniformly over all such heights \(h\),
		\[
		M_{ll',R}
		=
		\partial_s y_{l,R}(R)\,\partial_s y_{l',R}(R)
		+o_{N,C,L,\delta_{\rm cut}}(R^{-3})
		=
		2\pi^2R^{-3}+o_{N,C,L,\delta_{\rm cut}}(R^{-3}).
		\]
		Consequently, if \(f_{\le}=\sum_{\alpha\le N}c_\alpha Y_\alpha\), then
		\[
		\sum_{\alpha,\beta\le N}c_\alpha\overline{c_\beta}
		\int_{\mathbb S^{n-1}}hY_\alpha\overline{Y_\beta}\,d\theta\,
		M_{\alpha\beta,R}
		=
		2\pi^2R^{-3}\langle f_{\le},hf_{\le}\rangle
		+o_{N,C,L,\delta_{\rm cut}}(R^{-3})\|f_{\le}\|^2 .
		\]
		Moreover, for the finite first-branch vector \(w_{\le}=J_Rf_{\le}\), the
		exact shifted pulled-back form satisfies
		\[
		Q_h[w_{\le}]-E_R\|w_{\le}\|^2
		=
		R^{-3}\langle f_{\le},
		P_N(T_n+2\pi^2h+b_n)P_Nf_{\le}\rangle
		+o_{N,C,L,\delta_{\rm cut}}(R^{-3})\|f_{\le}\|^2 .
		\]
	\end{lemma}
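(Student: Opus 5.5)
The plan is to view the pulled-back form \(Q_h\) on the fixed cylinder as a smooth one-parameter deformation of the separated ball form, with deformation field \(X=\beta_R(s)h(\theta)\partial_s\). The first variation on the first branches is then computed by a Hadamard/Rellich argument. Write \(Q_h=Q_0+\delta Q_h+\mathcal R_h\). Here \(Q_0\) is the shifted separated form on \([0,R]\times\SSph^{n-1}\), \(\delta Q_h\) is linear in \((h,\nabla h)\), and \(\mathcal R_h\) collects the quadratic terms. Lemma~\ref{lem:flattened-shear-form} gives the exact coefficients. Since \(y_{l,R}\) is the first eigenfunction of \(H_{l,R}\) with eigenvalue \(e_{l,R}\), the zeroth-order part on \(w_\le=J_Rf_\le\) is diagonal, \(\sum_l (e_{l,R}-E_R)|f_{l}|^2\). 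The dimension-shift expansion \(e_{l,R}=E_R+b_{n+2l}R^{-3}+o_l(R^{-3})\), maximized over the finitely many \(l\le N\), turns this into \(R^{-3}\langle f_\le,(T_n+b_n)f_\le\rangle+o_N(R^{-3})\|f_\le\|^2\).

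For the linear term, I would write \(\delta Q_h[u_l,u_{l'}]\) as the derivative at \(t=0\) of the form pulled back by \(F_{th}\). For eigenfunctions, the standard Rellich identity rewrites this as a boundary integral \(\int_{s=R}\langle X,\nu\rangle\,\partial_\nu u_l\,\overline{\partial_\nu u_{l'}}\) plus a skew term. The skew term involves only \(u\overline v\) on the boundary and vanishes by the Dirichlet condition. On \(s=R\) we have \(\beta_R=1\), so the flux equals \(\partial_sy_{l,R}(R)\partial_sy_{l',R}(R)\int hY_l\overline{Y_{l'}}\). Lemma~\ref{lem:finite-box-jost-data} gives the slope \(-\sqrt{2/R}\,\pi/R+O_N(R^{-5/2})\), so the product is \(2R^{-1}\pi^2R^{-2}+O_N(R^{-4})=2\pi^2R^{-3}+o(R^{-3})\), uniformly in \(l,l'\le N\). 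Summing against \(c_\alpha\overline{c_\beta}\) with \(|\int hY_\alpha\overline{Y_\beta}|\le C\) over a finite index set gives the second display with an \(o_N(R^{-3})\) error.

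The main obstacle is to show that the interior terms, which are not exact divergences because \(\beta_R\) is a cutoff rather than a dilation, are \(o(R^{-3})\). This has to be done for \(\Delta h\) without using any bound on \(\Delta h\), and the leftover must match the \(R^{-3}\) scale. I would split the cylinder into three regions.

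\emph{Regular bulk} \(s\ge2\eps_R\). Here \(\beta_R=s/R\) is linear. The Rellich identity then holds exactly, up to curvature terms \(\coth s-1=O(e^{-2s})\). Because of the extra \(R^{-1}\) factor these are bounded by \(R^{-1}\int_{\text{fixed region}}|y|^2\), and \(y_{l,R}\) has mass \(O(R^{-3/2})^2\) on fixed intervals by the normalization scale.

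\emph{Transition layer} \(\eps_R\le s\le2\eps_R\). Here \(\beta_R''=O(R^{-1}\eps_R^{-1})\), and the pointwise bound \(|y_{l,R}(s)|^2\le CR^{-3}s\), in the spirit of Lemma~\ref{lem:finite-box-jost-data}, gives a contribution \(O(R^{-1}\eps_R^{-1}\cdot R^{-3}\eps_R^2)=o(R^{-3})\).

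\emph{Singular core} \(s\le\eps_R\). Here \(\beta_R=0\), so there is no contribution at all.

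Terms containing \(\Delta_{\SSph^{n-1}}h\) I would handle with Lemma~\ref{lem:angular-jacobian-ibp}, which leaves only \(L\)-dependent constants multiplying the small weights \(\alpha_j\). The quadratic remainder \(\mathcal R_h\) carries \(\beta_R^2|\nabla h|^2\sinh^{-2}\) or \((\beta_R')^2\) factors, hence \(O_{C,L}(R^{-2})\) times the branch energy. At first sight this is \(O(R^{-4})\) relative to \(E_R\), but that needs checking, since the radial energy is \(\pi^2R^{-2}\). I expect this check to work because the dangerous \(\beta_R'^2|\partial_sy|^2\) term is \(R^{-2}\cdot R^{-2}\). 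Collecting the three pieces gives the final display with \(P_N(T_n+2\pi^2h+b_n)P_N\).
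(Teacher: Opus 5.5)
Your outline follows the paper's route closely. It has the same ingredients: the round-ball first variation via the Hadamard--Rellich identity, the flux read off at \(s=R\) (where \(y_{l,R}(R)=0\) cancels the half-density), the finite-box slope \(-\sqrt{2/R}\,\pi/R+O_N(R^{-5/2})\), the diagonal part \(\sum_l(e_{l,R}-E_R)|f_l|^2\) via the dimension-shifted expansion, and a second-order remainder of size \(O(R^{-4})\). However, one step is false.

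After the eigenvalue equations are used, the non-flux term in the identity is not a boundary term in \(u\overline v\). It is the spectral-defect commutator
\[
(e_{l,R}-E_R)\langle A_Xu_{l,R},u_{l',R}\rangle+(e_{l',R}-E_R)\langle u_{l,R},A_Xu_{l',R}\rangle,
\qquad A_X=-X\cdot\nabla-\tfrac12\operatorname{div}X .
\]
The Dirichlet condition only kills the boundary term in the skew-adjointness formula. That gives \(\langle A_Xu,v\rangle+\langle u,A_Xv\rangle=0\), not \(\langle A_Xu,v\rangle=0\). So the commutator reduces to \((e_{l,R}-e_{l',R})\langle A_Xu_{l,R},u_{l',R}\rangle\). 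This vanishes when \(l=l'\) but not for \(l\ne l'\): it carries the angular factor \(\int hY_l\overline{Y_{l'}}\) and a radial integral spread over the whole bulk, so none of your three regional estimates captures it.

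It has to be bounded separately, as the paper does. First, \(e_{l,R}-e_{l',R}=O_N(R^{-3})\) from the finitely many fixed-degree expansions. Second, \(\|A_Xu_{l,R}\|_2\le C_NR^{-1}\), which follows from \(\|(s/R)\partial_sy_{l,R}\|_2\le C_NR^{-1}\); this uses \(|\partial_sy_{l,R}|\le C_NR^{-3/2}s^{-1/2}\) near the origin and \(|\partial_sy_{l,R}|\le C_NR^{-3/2}\) on the bulk. The product is \(O_N(R^{-4})\). Without this bound, the final display of the lemma is unproved whenever \(f_\le\) mixes different degrees, because that display is the full shifted form on \(J_Rf_\le\) and, unlike \(M_{ll',R}\), it contains the commutator.

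You also misplace the difficulty. The Rellich divergence identity is pointwise and holds for any smooth field. Since \(X\) is a multiple of \(\beta_Rh\,\partial_s\) and vanishes near \(s=0\), the first variation is exactly the flux plus the commutator above. There are no extra first-order interior terms caused by \(\beta_R\) being a cutoff rather than a dilation. The paper compares with the dilation field only through the commutator of the difference field, whose boundary trace is zero. Your regional split is therefore unnecessary at first order. Moreover, applying the identity only on \([2\eps_R,R]\) would leave an inner boundary term at \(s=2\eps_R\) that you do not account for.

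The cutoff genuinely matters in the finite-height remainder \(\int_0^1(1-t)\ddot{\mathcal B}_t\,dt\), which must be \(O(R^{-4})\) uniformly in \(t\). Besides shear-squared and \((\beta_R')^2\) terms, this remainder contains:
\begin{itemize}
\item endpoint coefficients \(R^{-1}s^{-2}\beta_Rh\);
\item transition coefficients \(\beta_R''\) and \(G_s'=O(R^{-1}\eps_R^{-2})\) on \([\eps_R,2\eps_R]\);
\item Jacobian and half-density factors;
\item extra-height terms.
\end{itemize}
All of these are controlled by \(|y_{l,R}|+|s\partial_sy_{l,R}|\le C_NR^{-3/2}s^{1/2}\).

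Finally, your heuristic ``radial energy \(\pi^2R^{-2}\)'' fails termwise in the critical channel \((n,l)=(2,0)\). There \(\partial_sy_{0,R}\sim s^{-1/2}\) is not square integrable at \(0\). The \((\beta_R')^2|\partial_sy|^2\) term is small only because \(\beta_R'\) vanishes on \([0,\eps_R]\).
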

	
	\noindent\emph{Orientation for this lemma.}
	This lemma is the finite first-branch coefficient calculation.  Its statement
	says that, after flattening the graph boundary and restricting to the fixed
	angular degrees \(l,l'\le N\), the only order-\(R^{-3}\) height-dependent
	matrix is the Rellich normal-derivative boundary flux, whose radial scalar is
	\(2\pi^2R^{-3}+o(R^{-3})\).  The proof is organized into named stages:
	Hadamard identity and sign convention; finite-height reduction; boundary-flux
	coefficient; spectral-defect commutator and cutoff flow; endpoint, transition,
	and shear remainders; and assembly of the finite first-branch block.
	To verify this, the finite-height issue is isolated in Stage 2: it is exactly
	the uniform second-\(t\)-derivative bound for the pulled-back form along
	\(F_t(s,\theta)=s-t\beta_R(s)h(\theta)\).  Once that bound is in place, the
	finite graph displacement has the same leading coefficient as the infinitesimal
	Hadamard variation, and the rest of the lemma only routes the named remainder
	classes.
	
	\begin{proof}
		\emph{Stage 1: Hadamard identity and sign convention.}
		The matrix is computed for the physical hyperbolic Dirichlet form before
		comparing the separated \(l\)-rows.  Let \(L=-\Delta_{\mathbb H^n}\) on
		\(B_R\), and let \(u_{l,R}\) and \(u_{l',R}\) be exact ball eigenfunctions with
		eigenvalues \(e_{l,R}\) and \(e_{l',R}\).  Introduce the path
		\[
		F_t(s,\theta)=s-t\beta_R(s)h(\theta),\qquad 0\le t\le1,
		\]
		and let \(X_h=-\beta_Rh\,\partial_s\) be its velocity field at \(t=0\).  On
		\(\partial B_R\), \(X_h\cdot\nu=-h\).  Let \(B_h^{E_R}\) denote the
		derivative at \(t=0\) of the \(L^2\)-unitarily pulled-back shifted physical
		form \(q_{E_R}=q-E_R\|\cdot\|^2\).  We use the local-height sign convention:
		positive \(h\) is inward motion of the boundary.  Thus the outward normal
		velocity is \(V=X_h\cdot\nu=-h\), and the Dirichlet Hadamard sign is
		\(-V\).  In polarized form, for Dirichlet eigenfunctions \(u\) and \(v\)
		with eigenvalues \(e_u\) and \(e_v\), the exact Hadamard--Rellich identity is
		\[
		B_h^{E_R}(u,v)
		=
		-\int_{\partial B_R}(X_h\cdot\nu)\,
		\partial_\nu u\,\partial_\nu v\,d\sigma_R
		+\mathcal C_X^{E_R}(u,v).
		\]
		Here
		\[
		A_X=-X\cdot\nabla-\frac12\operatorname{div}_gX,
		\]
		denotes the skew-adjoint operator associated with the unitary pullback.  With this
		orientation of the unitary,
		\[
		\mathcal C_X^{E_R}(u,v)
		=
		(e_u-E_R)\langle A_Xu,v\rangle
		+(e_v-E_R)\langle u,A_Xv\rangle
		=
		(e_u-e_v)\langle A_Xu,v\rangle .
		\]
		The inverse unitary convention changes only the sign of the final
		commutator.  This sign is immaterial below; the proof uses only the exact
		diagonal cancellation and the absolute value of the off-diagonal term.  The
		mass part has no boundary term because \(u=v=0\) on \(\partial B_R\).  The
		boundary part of the identity is therefore
		\[
		\int_{\partial B_R}h\,\partial_\nu u\,\partial_\nu v\,d\sigma_R,
		\]
		so an inward deformation gives the positive first variation expected from
		domain monotonicity.  The flux is evaluated on the unperturbed sphere
		\(s=R\), because it is the first-variation coefficient at \(t=0\); the
		remaining effects of moving to the graph boundary \(s=R-h(\theta)\) are
		absorbed into the finite-block remainders \(\Delta_{ll',R}\) controlled in
		Stage 5.  Here is
		the calculation behind the identity.  Before
		the half-density unitary is inserted, differentiating the pulled-back
		Dirichlet energy gives the integrand
		\[
		(\operatorname{div}X)\langle\nabla u,\nabla v\rangle
		-\langle\nabla_{\nabla u}X,\nabla v\rangle
		-\langle\nabla_{\nabla v}X,\nabla u\rangle
		-E_R(\operatorname{div}X)uv .
		\]
		Writing \(Xu=X\cdot\nabla u\), define the Rellich vector field
		\[
		\mathcal R_X(u,v)
		=
		\langle\nabla u,\nabla v\rangle X
		-(Xu)\nabla v-(Xv)\nabla u .
		\]
		Then
		\[
		\begin{aligned}
			\operatorname{div}\mathcal R_X(u,v)
			&=
			(\operatorname{div}X)\langle\nabla u,\nabla v\rangle
			-\langle\nabla_{\nabla u}X,\nabla v\rangle
			-\langle\nabla_{\nabla v}X,\nabla u\rangle  \\
			&\qquad
			-(Xu) \Delta_{\mathbb H^n}v-(Xv)\Delta_{\mathbb H^n}u .
		\end{aligned}
		\]
		Because \(\beta_R=0\) on a neighborhood of the origin, \(X_h\) and
		\(\mathcal R_{X_h}(u,v)\) vanish there.  Thus the divergence theorem produces
		no inner-boundary contribution from the singular polar endpoint.  On
		\(\partial B_R\), \(u=v=0\), their tangential derivatives vanish there, and
		\[
		\mathcal R_X(u,v)\cdot\nu
		=-(X\cdot\nu)\partial_\nu u\,\partial_\nu v .
		\]
		Integrating the divergence identity therefore gives the boundary term above.
		The two \(J_t^{1/2}\) factors in the \(L^2\)-unitary pullback add the
		corresponding \(-\frac12(\operatorname{div}X)\) contributions in the two
		slots.  Thus the remaining bulk terms are exactly the skew-adjoint
		commutator terms encoded by \(A_X=-X\cdot\nabla-\frac12\operatorname{div}X\).
		Pairing these terms with
		\((-\Delta_{\mathbb H^n}-E_R)u=(e_u-E_R)u\) and
		\((-\Delta_{\mathbb H^n}-E_R)v=(e_v-E_R)v\) gives the displayed
		\(\mathcal C_X^{E_R}(u,v)\), up to the harmless global sign determined by
		which of the two inverse unitary conventions is used.
		For the fixed finite first-branch block, the off-diagonal commutator part is
		lower order:
		\[
		(e_{l,R}-e_{l',R})\langle A_Xu_{l,R},u_{l',R}\rangle
		=O_{N,C,L}(R^{-4}),
		\]
		as verified below from the branch asymptotics and the \(O_N(R^{-1})\) size of
		the radial generator.  Thus the leading \(R^{-3}\) term in the first
		variation comes only from the Rellich boundary flux.
		
		\emph{Stage 2: finite-height reduction.}
		The displayed identity supplies the first derivative at the round ball.  To
		use it for the actual finite graph displacement, let
		\(\mathcal B_t^{E_R}\) be the shifted pulled-back finite-sector form obtained
		from the path \(F_t(s,\theta)=s-t\beta_R(s)h(\theta)\), evaluated on the
		fixed first-branch inputs.  Then
		\[
		\mathcal B_1^{E_R}-\mathcal B_0^{E_R}
		=
		\dot{\mathcal B}_0^{E_R}
		+\int_0^1(1-t)\ddot{\mathcal B}_t^{E_R}\,dt .
		\]
		The estimates below are uniform in \(t\), because \(0\le th\le C\) and
		\(\Lip(th)\le L\).  Each second \(t\)-derivative term contains either a
		second height/Jacobian factor, an extra \(R^{-1}\), a transition coefficient
		on \([\eps_R,2\eps_R]\), or a tangential correction carrying
		\(\sinh^{-2}(F_t)\).  More explicitly, after writing
		\(a_t=\partial_sF_t=1-t\beta_R'h\) and expanding the Schrodinger-flat
		pullback, \(\ddot{\mathcal B}_t^{E_R}\) is a finite sum of bilinear
		integrals of the following five types:
		\[
		\begin{array}{ll}
			\text{endpoint:} & R^{-1}s^{-2}\beta_Rh \text{ or its differentiated
				transition version},\\
			\text{transition:} & \beta_R'',\,(\beta_R')^2,\text{ or }G_s' \text{ on }
			[\eps_R,2\eps_R],\\
			\text{shear:} & \beta_R^2|\nabla h|^2\sinh^{-2}(F_t)D_sy\,D_sy',\\
			\text{Jacobian:} & (a_t-1)^2,\ \partial_s\log a_t,\text{ and angular
				half-density factors},\\
			\text{extra height:} & \beta_R^2h^2 \text{ multiplying a regular-bulk
				coefficient.}
		\end{array}
		\]
		These are exactly the term families later routed through the endpoint,
		transition, shear, Jacobian, and extra-height-factor estimates; no second
		derivative introduces a new order-\(R^{-3}\) coefficient family.  The endpoint,
		transition, and regular-bulk bounds displayed below therefore give, uniformly
		for \(l,l'\le N\),
		\[
		|\ddot{\mathcal B}_t^{E_R}(u_{l,R},u_{l',R})|
		\le C_{N,C,L}R^{-4}.
		\]
		Thus the finite-height displacement has the same leading coefficient as the
		round-sphere first variation, and the integral remainder is included in
		\(\Delta_{ll',R}\).
		
		Applying this to \(u_{l,R}\) and \(u_{l',R}\), the shifted spectral part is
		\[
		(e_{l,R}-E_R)\langle A_Xu_{l,R},u_{l',R}\rangle
		+(e_{l',R}-E_R)\langle u_{l,R},A_Xu_{l',R}\rangle .
		\]
		For Dirichlet eigenfunctions \(u\) and \(v\), integration by parts gives the
		skew-adjointness identity
		\[
		\langle A_Xu,v\rangle+\langle u,A_Xv\rangle
		=
		-\int_{\partial B_R}(X\cdot\nu)u\overline v\,d\sigma_R=0,
		\]
		because \(u=v=0\) on \(\partial B_R\), with no inner-boundary contribution
		since \(\beta_R=0\) near the origin.  Taking \(v=u\) gives the diagonal
		vanishing, and taking \(u=u_{l,R}\), \(v=u_{l',R}\) gives the off-diagonal
		skew relation on the finite first-branch span.  This skew-adjointness
		identity is distinct from the Hadamard--Rellich boundary flux identity, which
		involves the normal derivatives \(\partial_\nu u\,\partial_\nu v\).  The
		off-diagonal commutator term
		\[
		(e_u-E_R)\langle A_Xu,v\rangle
		+(e_v-E_R)\langle u,A_Xv\rangle
		\]
		therefore simplifies to
		\((e_u-e_v)\langle A_Xu,v\rangle\) by this skew relation, not by a boundary
		flux calculation.  The normal-derivative flux
		\(-(X\cdot\nu)\partial_\nu u\,\partial_\nu v\) was produced above by the
		Rellich vector field \(\mathcal R_X(u,v)\).
		Thus for \(l\ne l'\), the shifted spectral part rewrites as
		\[
		(e_{l,R}-e_{l',R})\langle A_Xu_{l,R},u_{l',R}\rangle,
		\]
		which is an off-diagonal error absorbed into the Stage 5 remainder estimates.
		Therefore the leading
		coefficient is the boundary flux; row-dependent separated Schrodinger
		centrifugal terms have already canceled in the physical Hadamard identity
		before the \(l\)-rows are read off.
		
		\emph{Stage 3: boundary-flux coefficient.}
		If \(u_{l,R}=(\sinh s)^{-(n-1)/2}y_{l,R}Y_l\), then \(y_{l,R}(R)=0\).  Hence
		the hyperbolic boundary measure cancels the Schrodinger half-density, and the
		boundary-flux scalar, after the angular factor is separated, is
		\[
		\partial_s y_{l,R}(R)\,\partial_s y_{l',R}(R)
		\]
		with the sign convention fixed above.
		Indeed,
		\[
		\begin{aligned}
			\partial_\nu u_{l,R}|_{s=R}
			&=
			(\sinh R)^{-(n-1)/2}
			\partial_s y_{l,R}(R)Y_l(\theta),
			\\
			&\int_{\partial B_R}
			h\,\partial_\nu u_{l,R}\,\partial_\nu u_{l',R}\,d\sigma_R  \\
			&\qquad =
			\partial_s y_{l,R}(R)\partial_s y_{l',R}(R)
			\int_{\SSph^{n-1}}
			h\,Y_l\overline{Y_{l'}}\,d\theta ,
		\end{aligned}
		\]
		because the derivative of the half-density term is multiplied by
		\(y_{l,R}(R)=0\), and the two factors
		\((\sinh R)^{-(n-1)/2}\) cancel the boundary measure
		\(d\sigma_R=\sinh^{n-1}R\,d\theta\) exactly.
		The Hadamard derivative is taken at the round sphere, where the normal is
		radial.  Since \(y_{l,R}(R)=0\), the tangential derivatives of
		\((\sinh s)^{-(n-1)/2}y_{l,R}(s)Y_l(\theta)\) vanish on the unperturbed
		boundary.  If one instead expands the graph normal for
		\(r=R-h(\theta)\), the remaining tangential correction carries a factor
		\(\sinh^{-2}R\) and bounded finite-sector angular derivatives, hence
		contributes only \(O_{N,L}(e^{-2R}R^{-3})\).
		
		\emph{Stage 4: spectral-defect commutator and cutoff flow.}
		For \(l\ne l'\), the first branches are not exactly \(E_R\)-eigenvectors.
		The fixed-degree ball asymptotics give
		\[
		e_{l,R}-E_R=b_{n+2l}R^{-3}+o_l(R^{-3})=O_N(R^{-3})
		\]
		uniformly for \(l\le N\).  Here we use Krist\'aly's fixed-dimension expansion
		with a genuine \(o_m(R^{-3})\) remainder after the dimension shift
		\(m=n+2l\), as encoded by the first-four-terms statement in
		\cite[Remark 1.1(i)]{Kristaly}; the set \(\{b_{n+2l}:l\le N\}\) is finite,
		and the finitely many fixed-degree remainders have maximum \(o_N(R^{-3})\).
		Hence
		\(e_{l,R}-e_{l',R}=O_N(R^{-3})\).  The resulting shifted-form generator
		terms are lower order.  More precisely, after the angular factor has been
		separated, the radial part of the Schrodinger-conjugated generator for the
		reference scaling field is, up to the harmless global sign determined by the
		height-variation convention,
		\[
		G_R=-(s/R)\partial_s-\frac1{2R}.
		\]
		The fixed-sector endpoint and bulk estimates from
		Lemma~\ref{lem:finite-box-jost-data} give
		\[
		\|G_Ry_{l,R}\|_2
		\le
		\|(s/R)\partial_s y_{l,R}\|_2+\frac1{2R}\|y_{l,R}\|_2
		\le C_NR^{-1}.
		\]
		This is an \(L^2\)-operator-vector bound, so by Cauchy--Schwarz it applies to
		off-diagonal matrix elements.  Hence the off-diagonal generator term is
		\(O_N(R^{-3})O_N(R^{-1})=O_N(R^{-4})\).  In the diagonal case the
		Dirichlet boundary cancellation above gives zero before any estimate is
		needed.
		Here the Schrodinger-conjugated flat generator is computed as follows.  If
		\(X=X^s\partial_s\), then
		\[
		\operatorname{div}_gX=\partial_sX^s+(n-1)\coth(s)X^s .
		\]
		Conjugating
		\(-X^s\partial_s-\frac12\operatorname{div}_g X\) by
		\((\sinh s)^{(n-1)/2}\) gives
		\[
		-X^s\partial_s-\frac12\partial_s X^s .
		\]
		The \((n-1)\coth(s)X^s\) term cancels exactly against the derivative of the
		half-density, so a field \(X^s=\sigma(s)h(\theta)\) has flat generator
		\[
		-\sigma(s)h\partial_s-\frac12\sigma'(s)h .
		\]
		It contains no angular derivative.  The actual cutoff flattening is compared
		with the pure normal flow at the level of the physical shifted form, before
		the separated rows are read off.
		The generator estimate keeps the actual weighted derivative:
		\[
		\|(s/R)\partial_s y_{l,R}\|_2\le C_NR^{-1}.
		\]
		On \([S_N,R]\) the common finite-box profile gives
		\[
		|\partial_s y_{l,R}(s)|\le C_NR^{-3/2},
		\qquad
		\int_{S_N}^R|\partial_s y_{l,R}|^2\,ds\le C_NR^{-2}.
		\]
		On the endpoint interval the factor \(s/R\) is essential in the critical
		channel; the endpoint bound below gives a contribution
		\(O_N(R^{-5/2})\) to the \(L^2\)-norm of
		\((s/R)\partial_s y_{l,R}\) there.
		
		Thus the radial interior extension affects the finite first-branch block only
		through the displayed eigenvalue-defect commutators.  If one instead writes a
		nonradial graph-normal extension, the tangential boundary polarization is a
		symmetric finite-dimensional angular bilinear form, with typical terms such as
		\(\sinh^{-2}R\langle\nabla_S Y_l,h\nabla_S Y_{l'}\rangle\).  Hence it is
		bounded by
		\[
		C_{N,L}\sinh^{-2}R\,|\partial_s y_{l,R}(R)|\,|\partial_s y_{l',R}(R)|
		=O_{N,L}(e^{-2R}R^{-3}).
		\]
		The cutoff field \(-\beta_R h\partial_s\) and the reference scaling field
		\(-(s/R)h\partial_s\) are radial, have the same boundary normal component,
		and coincide on the regular bulk \(s\ge2\eps_R\).  Their difference is radial,
		vanishes at \(s=R\), has zero boundary normal trace, and has a unitary
		generator vector of \(L^2\)-size \(O_{N,L}(R^{-1})\) on first-branch inputs.
		Indeed, if
		\(\sigma_{\rm diff}=s/R-\beta_R\), then the flat generator contains
		\(-\sigma_{\rm diff}h\partial_s-\frac12\sigma_{\rm diff}'h\), but no
		\(\sigma_{\rm diff}''\); the second-derivative transition coefficients are
		absorbed into the Stage 5 finite-block remainders.  On the protected
		origin \(\sigma_{\rm diff}=s/R\), and on the transition layer
		\(\sigma_{\rm diff}=O(\eps_R/R)\), \(\sigma_{\rm diff}'=O(R^{-1})\), so the
		same endpoint and transition estimates give the stated \(O_{N,L}(R^{-1})\)
		vector bound.  The zero boundary trace means the difference field contributes
		no boundary flux.  Its bulk contribution is instead the commutator
		\[
		(e_{l,R}-e_{l',R})
		\langle A_{X_{\rm diff}}u_{l,R},u_{l',R}\rangle
		=
		O_N(R^{-3})O_{N,L}(R^{-1})
		=
		O_{N,L}(R^{-4}),
		\]
		which is \(o_{N,C,L,\delta_{\rm cut}}(R^{-3})\).
		
		Therefore the first-order cutoff-to-normal-flow difference is already
		\(O_{N,L}(R^{-4})\) globally.  The remaining finite first-branch
		coefficient remainders after subtracting the first-order local-height
		model are only endpoint, transition, and higher-order terms.
		
		\emph{Stage 5: endpoint, transition, and shear remainders.}
		On the
		transition layer \(I_R=[\eps_R,2\eps_R]\), the fixed-sector endpoint estimates
		give, uniformly for \(l\le N\),
		\[
		|y_{l,R}(s)|\le C_NR^{-3/2}s^{1/2},\qquad
		|s\partial_s y_{l,R}(s)|\le C_NR^{-3/2}s^{1/2},
		\qquad 0<s\le2\eps_R,
		\]
		with stronger powers of \(s\) in noncritical channels.  These are
		normalized first-branch bounds, not projected-complement estimates.  For
		each fixed \(l\le N\), the regular finite-box solution
		\[
		u_l(s,\rho),\qquad \rho=\pi/R+O_N(R^{-2}),
		\]
		is a small-energy perturbation of the positive zero-energy regular solution
		on \(0<s\le2\eps_R\).  That zero-energy solution has endpoint behavior
		\(s^{\nu_l+1/2}\) and a nonzero linear coefficient at infinity; hence its
		\((0,R)\)-norm is of order \(R^{3/2}\).  The finite-box Jost normalization
		from Lemma~\ref{lem:finite-box-jost-data}, the same input used for the
		boundary-slope asymptotic, transfers this uniformly for \(l\le N\) to the
		actual normalized first eigenfunctions:
		\[
		|y_{l,R}(s)|+|s\partial_s y_{l,R}(s)|
		\le C_NR^{-3/2}s^{\nu_l+1/2}
		\le C_NR^{-3/2}s^{1/2},
		\qquad 0<s\le2\eps_R .
		\]
		In the critical \((n,l)=(2,0)\) channel this says that the comparison
		solution is \(s^{1/2}\) at the endpoint and grows like \(s\) at infinity, so
		the normalized endpoint coefficient is \(O(R^{-3/2})\).  The same
		fixed-sector normalization gives the boundary-slope size \(R^{-3/2}\).
		Together with the finite-box bulk profile
		\(|\partial_s y_{l,R}|\le C_NR^{-3/2}\) on \([S_N,R]\), this yields
		\[
		\|(s/R)\partial_s y_{l,R}\|_2\le C_NR^{-1}.
		\]
		Thus the worst critical transition scalar satisfies
		\[
		R^{-1}\eps_R^{-2}\int_{I_R}|y_{l,R}y_{l',R}|\,ds
		\le C_NR^{-4},
		\]
		and the mixed transition term satisfies
		\[
		R^{-1}\eps_R^{-1}
		\int_{I_R}\bigl(|\partial_s y_{l,R}|\,|y_{l',R}|
		+|y_{l,R}|\,|\partial_s y_{l',R}|\bigr)\,ds
		\le C_NR^{-4}.
		\]
		The \(G_s^2\), quadratic metric, and cutoff-square terms are smaller, since
		they contain an additional factor \(R^{-1}\), \(\eps_R\), or a positive local
		majorant.  There is no hidden \(O(R^{-2})\) finite-height bulk term here.  The
		constant-height case is a consistency check on the linear coefficient and on
		the constant-direction quadratic scale: if \(h\equiv c\), the domain is
		exactly \(B_{R-c}(o)\), and
		\[
		\frac{\pi^2}{(R-c)^2}
		=
		\frac{\pi^2}{R^2}
		+2\pi^2cR^{-3}
		+O_C(R^{-4});
		\]
		hence the quadratic height contribution is \(O_C(R^{-4})\), not
		\(O(R^{-2})\).  The genuinely variable-height quadratic pieces are controlled
		separately.  In the exact coordinate pullback the radial metric quadratic
		remainder is bounded, on fixed first-branch inputs, by
		\[
		\int \frac{(\beta_R'h)^2}{1-\beta_R'h}\,
		|\partial_s y_{l,R}|^2\,ds
		\le C_{C}R^{-2}\int|\partial_s y_{l,R}|^2\,ds
		=O_{N,C}(R^{-4}).
		\]
		On the endpoint transition collar the displayed endpoint bounds give the same
		or smaller order.  In the critical channel the displayed endpoint bound gives
		\(|\partial_s y_{l,R}|\le C_NR^{-3/2}s^{-1/2}\), so on
		\([\eps_R,2\eps_R]\)
		\[
		\int_{I_R}(\beta_R')^2|\partial_s y_{l,R}|^2\,ds
		\le C_NR^{-5}\int_{\eps_R}^{2\eps_R}s^{-1}\,ds
		=O_N(R^{-5})=o_N(R^{-4}).
		\]
		The noncritical channels have better endpoint powers.
		The shear-square term has schematic size
		\[
		\int
		\frac{\beta_R^2|\nabla h|^2}{\sinh^2(F_h)}
		|D_sy_{l,R}|\,|D_sy_{l',R}|\,ds .
		\]
		Splitting at a fixed \(S_N\), the endpoint/Jost bounds on
		\([\eps_R,S_N]\) and the exponential factor \(\sinh^{-2}(F_h)\) on
		\([S_N,R]\) give
		\[
		\int
		\frac{\beta_R^2|\nabla h|^2}{\sinh^2(F_h)}
		|D_sy_{l,R}|\,|D_sy_{l',R}|\,ds
		\le C_{N,C,L}R^{-4}.
		\]
		Here \(D_s\) is the conjugated radial derivative appearing in the
		Schrodinger pullback; the protected origin has \(\beta_R=0\), and the
		transition collar is covered by the same endpoint estimates.  In the critical
		endpoint channel this estimate is borderline but smaller than required.  On
		\([\eps_R,S_N]\) one has \(\beta_R=O(s/R)\),
		\(\sinh(F_h)\simeq s\), and
		\(|D_sy_{l,R}|\le C_NR^{-3/2}s^{-1/2}\), hence the endpoint integrand is
		bounded by \(C_{N,C,L}R^{-5}s^{-1}\).  Its integral is
		\[
		C_{N,C,L}R^{-5}\log(S_N/\eps_R)=O_{N,C,L}(R^{-5}\log R)=o(R^{-4}).
		\]
		On \([S_N,R]\) the same derivative factors are \(O_N(R^{-3})\) and
		\(\sinh^{-2}(F_h)\) is exponentially small after enlarging \(S_N\), giving an
		even smaller contribution.
		The pulled-back mass term after the physical radial Schrodinger conjugation
		has coefficient \(1-\beta_R'h\) and is strictly linear in \(h\).  Although the
		unitary generator \(A_Xu_{l,R}\) has \(L^2\)-size \(O_N(R^{-1})\), the exact
		Hadamard--Rellich identity uses only the skew-adjoint spectral-defect
		commutator.  The identity may be written in physical \(L^2(dV_{\HH^n})\);
		after the radial Schrodinger conjugation, \(A_X\) is unitarily equivalent to
		the flat radial generator used in the size estimate.  No angular derivative is
		hidden here: \(X_h=-\beta_Rh\,\partial_s\) has only a radial component, so
		\(X_h\cdot\nabla\) differentiates only in \(s\), while
		\(\operatorname{div}X_h\) contains \(h(\theta)\) as a multiplier and no
		\(\nabla_\theta h\) term.  Thus the relevant term is
		\[
		(e_{l,R}-e_{l',R})
		\langle A_Xu_{l,R},u_{l',R}\rangle
		=O_N(R^{-3})O_{N,C,L}(R^{-1})
		=O_{N,C,L}(R^{-4}),
		\]
		not an independent \(\|A_Xu_{l,R}\|^2\) term.
		
		The estimate uses only the \(C^0\) and Lipschitz bounds on the smooth height.
		If angular second derivatives of \(h\) appear in a coordinate expansion of
		this finite block, one integrates by parts on the sphere at this stage; the
		derivatives then fall on the fixed finite set of spherical harmonics or on the
		weak gradient of \(h\), giving constants depending only on \(N\) and \(L\).
		Thus no \(C^2\) norm of a smooth approximating height enters
		\(\Delta_{ll',R}\).  Let \(\Delta_{ll',R}\) denote the sum of these
		finite-block remainders after the boundary flux and the skew-adjoint
		commutator have been subtracted.  Consequently
		\[
		|\Delta_{ll',R}|=o_{N,C,L,\delta_{\rm cut}}(R^{-3})
		\]
		uniformly for \(l,l'\le N\).  Hence the cutoff-flattened matrix has the same
		leading flux coefficient as the normal-flow derivative.
		
		\emph{Stage 6: assembly of the finite first-branch block.}
		The common finite-sector boundary-slope asymptotic is the fixed-\(l\)
		first-branch part of Lemma~\ref{lem:finite-box-jost-data}.  It does not rely
		on the projected Green estimates in
		Lemma~\ref{lem:critical-endpoint-jost-green}:
		\[
		\partial_s y_{l,R}(R)
		=
		-\sqrt{2/R}\,\pi/R+O_N(R^{-5/2}),
		\]
		and therefore, uniformly for \(l,l'\le N\),
		\[
		\partial_s y_{l,R}(R)\partial_s y_{l',R}(R)
		=
		\left(\sqrt{2/R}\,\pi/R\right)^2+O_N(R^{-4})
		=
		2\pi^2R^{-3}+O_N(R^{-4}).
		\]
		Thus, in particular,
		\[
		B_h^{E_R}(u_{l,R},u_{l,R})
		=
		2\pi^2R^{-3}\int_{\mathbb S^{n-1}}h|Y_l|^2\,d\theta
		+o_{N,C,L,\delta_{\rm cut}}(R^{-3}),
		\]
		and the polarized version gives the same \(2\pi^2h\) boundary matrix
		coefficient on the whole finite first-branch block.
		This gives \(M_{ll',R}=2\pi^2R^{-3}+o_{N,C,L,\delta_{\rm cut}}(R^{-3})\),
		uniformly for \(l,l'\le N\).  The finite-dimensional angular form identity
		follows by summing the displayed matrix identity against
		\(\int hY_\alpha\overline{Y_\beta}\,d\theta\); the error is uniform on the
		finite \(P_N\) space and over \(0\le h\le C\).
		The exact finite-block form is obtained by adding this first-variation matrix
		and the finite-block remainders to the unperturbed separated first-branch
		energies.  Since
		\[
		e_{l,R}-E_R=b_{n+2l}R^{-3}+o_l(R^{-3})
		=
		R^{-3}(b_n+\tau_{n,l})+o_l(R^{-3}),
		\]
		the unperturbed finite block contributes
		\[
		R^{-3}\langle f_{\le},P_N(T_n+b_n)P_Nf_{\le}\rangle
		+o_N(R^{-3})\|f_{\le}\|^2 .
		\]
		Combining this with the boundary-flux matrix identity and the bound on
		\(\Delta_{ll',R}\) gives the exact shifted finite-block form asserted in the
		statement.
	\end{proof}
	
	\begin{remark}[Separated-coordinate artifacts]
		\label{rem:separated-coordinate-artifacts}
		The row-dependent centrifugal terms that appear in separated Schrodinger
		coordinates are artifacts of expanding the already self-adjoint physical
		Hadamard variation after separation.  The proof of
		Lemma~\ref{lem:finite-first-branch-hadamard} applies the signed polarized
		Hadamard identity to the physical Dirichlet form before the rows are
		separated; that identity says that the first variation is the boundary normal
		flux, plus the skew-adjoint spectral-defect commutator displayed in the
		proof.  If the same calculation is expanded in separated Schrodinger
		coordinates, apparent off-diagonal Wronskian or \((V_l-V_{l'})\) bulk terms
		must be grouped with the angular-metric, Jacobian, and half-density pieces of
		the physical variation.  Their aggregate contribution is precisely the
		commutator \((e_l-e_{l'})\langle A_Xu_l,u_{l'}\rangle\), which is
		\(O_N(R^{-4})\) for fixed \(N\).  Thus no independent degree-difference bulk
		integral contributes to the leading \(2\pi^2\) coefficient.
	\end{remark}
	
	\paragraph{All-degree lower form.}
	For
	\[
	w=J_Rf_{\leq}+q_{\rm rad,\leq}+w_{>},\qquad f_{\leq}=P_Nf,
	\]
	the term \(q_{\rm rad,\leq}\) lies in the direct sum of the radial spectral
	complements to the first radial branch in the finite angular sector
	\(l\le N\), and \(w_>\) is the projection of \(w\) onto angular degrees
	\(>N\).  We write \({\mathcal Q}_{\rm rad}\) for the corresponding
	post-reservation positive radial-complement form on the finite angular
	sector.  Its precise fiberwise expression is not needed in the assembly
	below; the property used is
	\[
	{\mathcal Q}_{\rm rad}[q_{\rm rad,\leq}]
	\ge c_{\rm rad}R^{-2}\|q_{\rm rad,\leq}\|^2,
	\]
	after the fixed mixed-term reserve has already been removed.  The formal
	all-degree estimate used in the conclusion is recorded below as
	Lemma~\ref{lem:all-degree-lower-form-assembly}.  Combining
	Lemmas~\ref{lem:high-mode-angular-reserve},
	\ref{lem:high-mode-error-routing}, and
	\ref{lem:first-radial-complement-mixed} with the six-class routing in
	Definition~\ref{def:routed-coefficient-classes},
	the lower form gives the estimate
	\[
	Q_h[w]-E_R\|w\|^2
	\geq
	R^{-3}B_N^h[f_{\leq},w_{>}]
	-(\delta_{\rm eff}/32)R^{-3}\|f_{\leq}\|^2
	+c_{\rm rad}R^{-2}\|q_{\rm rad,\leq}\|^2
	-o_{N,C,L}(R^{-3})\|w\|^2.
	\]
	Here \(B_N^h\) is the pre-compression form
	\[
	\begin{aligned}
		B_N^h[f_{\leq},w_{>}]
		&=
		\langle f_{\leq},P_NL_hP_Nf_{\leq}\rangle
		+R^3(1-\rho_{\rm res})\operatorname{Sep}_>[w_{>}]
		-\frac{M_C^2}{4\eta_{\rm gap}}\|w_>\|^2.
	\end{aligned}
	\]
	The finite Hadamard boundary-flux matrix is used only on the finite block
	\(P_NL^2(\mathbb S^{n-1})\).  We do not identify a leading \(2\pi^2h\)
	boundary matrix for pairs with one degree above \(N\).  The bounded
	order-\(R^{-3}\) low/high coupling is instead Young-split as
	\[
	-M_CR^{-3}\|f_{\leq}\|\,\|w_>\|
	\ge
	-\eta_{\rm gap}R^{-3}\|f_{\leq}\|^2
	-\frac{M_C^2}{4\eta_{\rm gap}}R^{-3}\|w_>\|^2.
	\]
	The finite-low side is the displayed
	\(\delta_{\rm eff}/32=\eta_{\rm gap}\) loss; the high-side scalar penalty is
	the negative term included in \(B_N^h\).  The remaining low/high physical
	cross terms, after this bounded-multiplication part is separated, have a
	finite-low side \(o_{N,C,L}(R^{-3})\|f_{\leq}\|^2\) and a high side that is an
	\(R\)-vanishing multiple \(\tau'_{N,R}\operatorname{Sep}_>\), with
	\(\tau'_{N,R}\to0\) for fixed \(N\).  This additional vanishing high-side
	fraction is included in the same \(\rho_{\rm res}\operatorname{Sep}_>\)
	reserve before the clean \((1-\rho_{\rm res})\operatorname{Sep}_>\) summand
	is passed to compression.
	This is the output of the high-mode reserve step, not its input.  Before the
	reserve is spent, the high-mode contribution has the form
	\[
	\operatorname{Sep}_>[w_{>}]+\operatorname{Err}_{\rm hi}[w_{>}]
	=
	(1-\rho_{\rm res})\operatorname{Sep}_>[w_{>}]
	+
	\bigl(\rho_{\rm res}\operatorname{Sep}_>[w_{>}]
	+\operatorname{Err}_{\rm hi}[w_{>}]\bigr).
	\]
	Lemma~\ref{lem:high-mode-error-routing} is applied only to the parenthesized
	reserve summand, giving
	\[
	\rho_{\rm res}\operatorname{Sep}_>[w_{>}]
	+\operatorname{Err}_{\rm hi}[w_{>}]
	\ge -o_{N,C,L}(R^{-3})\|w_{>}\|^2.
	\]
	Thus the high term retained in \(B_N^h\) is precisely the unspent summand
	\((1-\rho_{\rm res})\operatorname{Sep}_>\); it is not used in the error
	absorption.
	Any favorable high-high boundary-height contribution has been discarded in
	this lower bound.  The notation suppresses the harmless \(R\)-dependence of
	this pre-compression form; it is replaced below by a fixed scalar high block
	using the high-mode compression estimate.
	The finite first-branch/radial-complement terms have already been absorbed in
	this display by Lemma~\ref{lem:first-radial-complement-mixed}.  A fixed
	fraction of the finite radial-complement form was reserved for the
	\(\eta{\mathcal Q}_{\rm rad}\) term in that lemma and is absorbed into the
	post-reservation notation \(c_{\rm rad}\); the remaining finite first-branch
	cost is \(o_{N,C,L}(R^{-3})\).
	The finite-box Jost normalization supplies the projected Green-kernel and
	endpoint-normalization estimates, but the branch gap itself uses the simpler
	Bessel comparison.  The high-angular reserve is applied before compression:
	a fixed fraction of the raw high separated form absorbs the high-mode
	coefficient errors, and only the remaining positive form is compressed to the
	coefficient \((1-\rho_{\rm res})(b_n+\tau_{n,N+1})\).  The subsequent Young
	step does not reuse the high-mode error absorption.
	The coefficient remainder is organized into three bounded components.  The
	finite first-branch block keeps the fixed
	\(\eta_{\rm gap}=\delta_{\rm eff}/32\) Young loss from the bounded low/high
	coupling; after that coupling has been separated, the remaining finite-low
	coefficient errors are \(o_{N,C,L}(R^{-3})\).  Finite radial complements may
	use a fixed \(\eta_{\rm form}\) fraction of the radial-complement form.  High
	modes, before compression, absorb bounded errors through a cutoff-small and
	\(R\)-vanishing reserve
	\[
	(\sigma_N+\sigma_{N,R})\operatorname{Ang}[w_{>}]
	+\tau_{N,R}\operatorname{Sep}_>[w_{>}],
	\qquad \sigma_N\to0\ (N\to\infty),\quad
	\sigma_{N,R},\tau_{N,R}\to0\ (R\to\infty)
	\]
	after the cutoff \(N\) has been chosen.  This notation is taken in the
	boundary-flattened coordinates
	\[
	r=F_h(s,\theta)=s-\beta_R(s)h(\theta),
	\]
	where one may take
	\[
	\beta_R(s)=\frac{s}{R}\chi(s/\eps_R)
	\]
	with a fixed smooth cutoff \(\chi=0\) on \([0,1]\), \(\chi=1\) on
	\([2,\infty)\), and \(0\le\chi\le1\).  Hence \(\beta_R=0\) near the
	singular endpoint, \(\beta_R=s/R\) on the regular bulk, \(\beta_R(R)=1\),
	and the transition derivatives have size
	\[
	\beta_R'=O(R^{-1}),\qquad
	\beta_R''=O(R^{-1}\eps_R^{-1})
	\]
	on \(\eps_R\le s\le2\eps_R\).  The terms produced by these transition
	derivatives are routed through classes (5) and (6) below.  The symbol \(w_l\)
	denotes the spherical-degree \(l\) radial component of
	\(w_{>}=\sum_{l>N}w_lY_l\).  With
	\(|\beta_R'|\le C_\beta/R\), the Jacobian
	\(\partial_sF_h=1-\beta_R'h\) satisfies
	\[
	\partial_sF_h\ge 1-C_\beta C/R\ge \frac12
	\]
	after enlarging \(R_0\).  Thus the boundary-flattening map is a
	diffeomorphism throughout the range where the form estimates are used.  With
	\(\lambda_l=l(l+n-2)\),
	\[
	\operatorname{Ang}[w_{>}]
	=\sum_{l>N}\lambda_l\int \sinh^{-2}(F_h)|w_l|^2
	\]
	and \(\operatorname{Sep}_>\) denotes the shifted raw high separated form in
	the fixed-cylinder Schrodinger-flattened coordinates:
	\[
	\operatorname{Sep}_>[w_>]
	=
	\sum_{l>N}
	\left\{
	\int\left(|\partial_sw_l|^2-\pi^2R^{-2}|w_l|^2\right)
	+c_l\int\sinh^{-2}(F_h)|w_l|^2
	\right\}.
	\]
	The form domain is still the fixed-cylinder Dirichlet domain.  Indeed
	\(\beta_R(R)=1\), so \(F_h(R,\theta)=R-h(\theta)\); hence the physical
	boundary condition \(z=0\) on the graph boundary pulls back to
	\(\widetilde z(R,\theta)=0\).  The half-density factors used in the
	Schrodinger conjugation are nonzero there, so every spherical coefficient
	appearing in \(w_>\) has trace \(w_l(R)=0\).  Thus ``raw'' here refers only
	to removing the perturbative Jacobian, metric, shear, and half-density
	coefficient errors from the pulled-back form; it does not change the fixed
	Dirichlet endpoint at \(s=R\).
	The \(a_n^2\) part of \(E_R=a_n^2+\pi^2R^{-2}\) has been cancelled by the
	radial Schrodinger conjugation.  The remaining \(-\pi^2R^{-2}\) mass is
	included in the displayed shifted radial kinetic term.  Perturbative
	Jacobian, metric, and half-density differences between the pulled-back
	physical form and this raw separated form are precisely the coefficient
	classes routed below.  In these classes \(a\) denotes only the coordinate
	Jacobian \(a=\partial_sF_h\).  The full hyperbolic volume factor has already
	been removed by the physical radial Schrodinger conjugation, so it is not an
	additional order-one mass multiplier in the fixed flat Hilbert space.
	
	\begin{lemma}[High-mode angular reserve]
		\label{lem:high-mode-angular-reserve}
		Assume \(N\ge N_A(n)\).  For every high-mode function \(w_>\), the raw high
		separated form satisfies
		\[
		\operatorname{Sep}_>[w_>]\ge c_A(n)\operatorname{Ang}[w_>],
		\qquad c_A(n)=1/2.
		\]
		Here \(\operatorname{Sep}_>\) is the raw high separated form with the bare
		centrifugal coefficient \(c_l\) evaluated in the flattened coordinate
		\(F_h\).  Perturbative differences, Jacobian factors, multiplicative
		coefficient errors, and finite-\(R\) remainders are not part of this reserve;
		they are charged separately in classes (2), (3), (4), (5), and (6).
	\end{lemma}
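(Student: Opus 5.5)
The plan is to prove the inequality degree by degree: both \(\operatorname{Sep}_>\) and \(\operatorname{Ang}\) are sums over \(l>N\) of decoupled one-dimensional forms in the radial coefficient \(w_l\). So it suffices to show, for each fixed \(l>N\ge N_A(n)\) and each \(w_l\) in the fixed-cylinder form domain,
\[
\int_0^R\bigl(|\partial_sw_l|^2-\pi^2R^{-2}|w_l|^2\bigr)\,ds
+c_l\int_0^R\sinh^{-2}(F_h)|w_l|^2\,ds
\ \ge\ \frac{\lambda_l}{2}\int_0^R\sinh^{-2}(F_h)|w_l|^2\,ds ,
\]
and then sum over \(l\). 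I split the left side into the radial kinetic part and the centrifugal part and bound each separately.

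For the centrifugal part, the choice of \(N_A(n)\) in Subsection~\ref{sec:constants} gives \(c_l\ge\lambda_l/2\) for every \(l>N_A(n)\). Since \(\sinh^{-2}(F_h)>0\) (because \(F_h\ge s/2>0\) once \(R\) is large), we get
\[
c_l\int\sinh^{-2}(F_h)|w_l|^2\ \ge\ \frac{\lambda_l}{2}\int\sinh^{-2}(F_h)|w_l|^2 .
\]
This is pointwise in \(s\) and \(\theta\), and uses nothing about \(h\) beyond positivity of \(F_h\).

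For the radial kinetic part I need
\[
\int_0^R|\partial_sw_l|^2\,ds\ \ge\ \pi^2R^{-2}\int_0^R|w_l|^2\,ds .
\]
This is the one-dimensional Dirichlet Poincaré inequality on \((0,R)\), whose sharp constant is \(\pi^2/R^2\). The trace \(w_l(R)=0\) was recorded just before the lemma, since \(\beta_R(R)=1\) carries the graph Dirichlet condition to \(s=R\). The main point to check is the vanishing at the singular endpoint \(s=0\): without it, a Neumann-type condition at \(0\) would only give \(\pi^2/(4R^2)\), and the inequality would fail.

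I handle the endpoint through the Friedrichs form domain. Enlarging \(N_A(n)\) if needed, every \(l>N_A\) has \(c_l\ge 3/4\), which is well above the critical value \(-1/4\). Near \(s=0\) the map \(F_h\) is the identity, because \(\beta_R=0\) there, so the centrifugal weight there is exactly \(c_l\sinh^{-2}s\). The Friedrichs form domain is therefore the closure of \(C_c^\infty(0,R)\) in the norm with this finite inverse-square energy. It suffices to prove the Poincaré bound for \(w_l\in C_c^\infty((0,R))\), where it is immediate since such functions vanish at both ends, and then pass to the closure. In the closure, \(\int|\partial_sw_l|^2\) and \(\int|w_l|^2\) are lower semicontinuous, respectively continuous, under convergence in the form norm, so the inequality persists. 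This density/closure step is the only genuinely delicate point, and I expect it to be the main obstacle. I would argue it exactly as in the truncation argument of Lemma~\ref{lem:radial-branch-gap}: extend \(H_0^1(\epsilon,R)\) functions by zero and use that their union is dense in the Friedrichs domain.

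Adding the two bounds gives the degreewise inequality, and summing over \(l>N\) yields \(\operatorname{Sep}_>[w_>]\ge\frac12\operatorname{Ang}[w_>]\). No perturbative coefficient enters, consistent with the statement that Jacobian, shear, half-density and transition errors are charged to the routed classes rather than to this reserve.
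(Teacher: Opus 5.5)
Your proposal is correct and follows essentially the same route as the paper. You split \(\operatorname{Sep}_>\) into the shifted radial kinetic part, which is nonnegative by the Dirichlet--Dirichlet Poincar\'e inequality \(\int_0^R|\partial_sw_l|^2\ge\pi^2R^{-2}\int_0^R|w_l|^2\), and the centrifugal part, which dominates \(\tfrac12\operatorname{Ang}\) because \(c_l\ge\lambda_l/2\) for \(l>N_A(n)\). The only difference is at \(s=0\): the paper uses \(c_l=\nu_l^2-\tfrac14>0\), i.e.\ \(\nu_l>1/2\), to get an honest \(H^1\) trace \(w_l(0)=0\), while you use density of \(C_c^\infty((0,R))\) in the Friedrichs form domain. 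Both are valid, and since \(c_l>0\) the form norm dominates the \(H^1\) norm, so your closure step gives full continuity rather than mere lower semicontinuity.
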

	
	\begin{proof}
		Before the bounded coefficient-error classes are routed, the raw high
		separated form contains the nonnegative shifted radial kinetic part and the
		unperturbed centrifugal term
		\[
		\sum_{l>N}\int\left(|\partial_sw_l|^2-\pi^2R^{-2}|w_l|^2\right)
		+
		\sum_{l>N}c_l\int\sinh^{-2}(F_h)|w_l|^2 .
		\]
		The first sum is nonnegative by the one-dimensional Dirichlet--Friedrichs
		Poincar\'e inequality on \((0,R)\): high angular modes have the zero-trace
		Friedrichs behavior at \(s=0\) and Dirichlet trace at \(s=R\).  More
		explicitly, for \(l>N_A(n)\) the inverse-square endpoint exponent satisfies
		\(\nu_l>1/2\).  This holds for all \(l\ge1\), hence for \(l>N_A(n)\), because
		the coefficient satisfies \(c_l>0\) and \(c_l=\nu_l^2-1/4\).  Functions in the
		Friedrichs form domain therefore have ordinary \(H^1\) trace \(w_l(0)=0\);
		the graph boundary gives \(w_l(R)=0\).  Thus
		\[
		\int_0^R|\partial_sw_l|^2\,ds
		\ge \pi^2R^{-2}\int_0^R|w_l|^2\,ds .
		\]
		This is the Dirichlet--Dirichlet constant, not the mixed
		Neumann--Dirichlet constant.  The coefficient \(F_h\) occurs only in the
		centrifugal weight above; the Jacobian perturbations of the radial kinetic
		term are not part of the raw reserve and are charged in classes (3)--(6).
		Since \(c_l\ge\lambda_l/2\) for \(l>N_A(n)\), this part alone gives
		\[
		\operatorname{Sep}_>[w_>]\ge c_A(n)\operatorname{Ang}[w_>],
		\qquad c_A(n)=1/2.
		\]
	\end{proof}
	
	\begin{definition}[Routed coefficient classes]
		\label{def:routed-coefficient-classes}
		After the boundary flattening \(F_h\), the radial Schrodinger conjugation, and
		the subtraction of the unperturbed separated first-branch energies, the
		coefficient bookkeeping used below is summarized schematically by
		\begin{equation}
			\label{eq:pulled-back-six-class-decomposition}
			Q_h^{\rm flat}[w]-E_R\|w\|^2
			=
			Q_{\rm low}^{\rm Had}[J_RP_Nf]
			+Q_{\rm sep}^{\rm raw}[q_{\rm rad,\leq}+w_>]
			+\sum_{j=1}^{6}{\mathcal E}_j[w]
			+{\mathcal R}_{N,R}[w].
		\end{equation}
		Here \(Q_{\rm low}^{\rm Had}\) is the finite first-branch Hadamard block of
		Lemma~\ref{lem:finite-first-branch-hadamard},
		\(Q_{\rm sep}^{\rm raw}\) is the raw separated radial-complement/high-mode
		form before perturbative coefficient errors, the six
		${\mathcal E}_j$ are exactly the classes listed below, and
		\({\mathcal R}_{N,R}\) denotes only the fixed-\(N\) little-\(o(R^{-3})\)
		remainders already isolated in the local lemmas.  Equation
		\eqref{eq:pulled-back-six-class-decomposition} is an identity used to organize
		the estimates; once a term is assigned to one of the six classes, it is routed
		exactly once.
		
		The six coefficient classes are allocated as follows.  On the finite
		first-branch block \(J_RP_Nf\), classes (2)--(6) are not estimated as
		separate absolute-value errors.  Their net contribution, after subtracting
		the constant-height model, is part of the physical finite-Hadamard variation in
		Lemma~\ref{lem:finite-first-branch-hadamard}.  The separate absolute-value
		routing in classes (2)--(6) applies to the radial complement and high-mode
		blocks.
		\begin{enumerate}
			\item[(1)] \emph{Global scaling.}  After subtracting the exact
			constant-height model, the remaining scaling coefficient is
			\(O_C(R^{-4})\) on finite first branches and \(O_C(R^{-1})\) relative to the
			radial-complement form.  For \(l,l'\le N\), the regular local-height matrix
			satisfies, by Lemma~\ref{lem:finite-first-branch-hadamard},
			\[
			M_{ll',R}=2\pi^2R^{-3}+o_{N,C,L,\delta_{\rm cut}}(R^{-3}).
			\]
			Thus the leading \(2\pi^2h\) term is the full finite angular multiplication
			matrix on the low block \(P_NL^2\), while the finite first-branch class-(1)
			remainder is only \(o_{N,C,L,\delta_{\rm cut}}(R^{-3})\).  The protected
			origin where \(\beta_R=0\), the endpoint displacement, and the transition
			derivatives are kept in classes (2) and (5), so no singular endpoint term is
			hidden in this flux computation.  The radial-complement part gives
			\[
			o_{N,C,L}(R^{-3})\|f_{\leq}\|^2
			+\eta_{\rm form}\,{\mathcal Q}_{\rm rad}[q_{\rm rad}]
			\]
			after \(R_0\) is large.
			\item[(2)] \emph{Singular-potential displacement.}  For the complement and
			high-mode routing, on \(\eps_R\le s\le1\) this has exterior size
			\(O_C(R^{-1}s^{-2})\), and for \(s\ge1\) it is exponentially weighted.
			Radial complements use the
			projected Green/Jost estimates from
			Lemma~\ref{lem:critical-endpoint-jost-green}.  In high modes the pure centrifugal
			displacement has the favorable sign and is not a negative error term: for
			\(N\ge N_A(n)\) one has
			\(c_l\ge0\), while \(0\le\beta_Rh\) gives \(F_h=s-\beta_Rh\le s\) and hence
			\[
			c_l\bigl(\sinh^{-2}(F_h)-\sinh^{-2}s\bigr)|w_l|^2\ge0 .
			\]
			Thus this piece is discarded in lower bounds.  The remaining multiplicative
			and Jacobian errors from the pullback are routed through classes (3), (5),
			and (6), where their high-mode errors are charged to the
			\((\sigma_N+\sigma_{N,R})\operatorname{Ang}
			+\tau_{N,R}\operatorname{Sep}_>\) reserve.
			\item[(3)] \emph{Factors of \(a-1\).}  Multiplicative coefficient errors of
			size \(O_C(R^{-1})\) are included in the physical finite-Hadamard remainder
			on the finite first-branch block after the constant-height model is removed.
			On complement/high blocks they are estimated relatively.  They contribute an
			\(\eta_{\rm form}\) radial-complement loss and a vanishing
			\(\tau_{N,R}\operatorname{Sep}_>\) error in high modes.
			\item[(4)] \emph{Unitary shear.}  The mixed term has schematic form
			\[
			\frac{\beta_R}{\sinh^2(F_h)}\,\nabla h\cdot D_sw\,\nabla_\theta w .
			\]
			On the finite first-branch block this term is not estimated separately by an
			absolute-value bound.  It is grouped with the companion metric, Jacobian, and
			half-density terms in the physical finite-Hadamard variation of
			Lemma~\ref{lem:finite-first-branch-hadamard}; the apparent separated-coordinate
			bulk terms cancel before the angular matrix is read off.  On the
			radial-complement and high-mode blocks, Lemma~\ref{lem:flattened-shear-form}
			is applied before angular projection.  On finite radial complements, the
			derivative part of the shear is charged to a fixed \(\eta_{\rm form}\)
			portion of the radial-complement kinetic form.  On high modes the Young
			parameter is instead chosen to vanish with \(R\); for instance
			\(\eta_R=R^{-1/2}\) gives
			\[
			\eta_R\operatorname{Sep}_>[w_>]
			+C_{C,L}\eta_R^{-1}R^{-2}\operatorname{Ang}[w_>],
			\qquad \eta_R\to0,\quad \eta_R^{-1}R^{-2}\to0.
			\]
			Thus high shear contributes only to \(\tau_{N,R}\operatorname{Sep}_>\) and
			\(\sigma_{N,R}\operatorname{Ang}\); it never spends a fixed
			\(\eta_{\rm form}\) portion of the high block passed to compression.
			\item[(5)] \emph{Radial-Jacobian endpoint and transition terms.}  The
			endpoint scalar has size \(O(R^{-1}s^{-2})\), while transition derivatives
			are supported on \(\eps_R\le s\le2\eps_R\).  Here \(G_s\) denotes the
			radial logarithmic Jacobian coefficient arising in the unitary pullback,
			for instance \(G_s=\partial_s\log(\partial_sF_h)\) up to uniformly bounded
			smooth factors.  Thus it is controlled by \(\beta_R''\), not merely by
			\(\beta_R'\), and
			\[
			G_s=O_C(R^{-1}\eps_R^{-1}),\qquad
			G_s'=O_C(R^{-1}\eps_R^{-2}).
			\]
			On the finite first-branch block these terms are part of
			Lemma~\ref{lem:finite-first-branch-hadamard}.  On finite radial complements,
			critical diagonal transition terms are integrated by parts with the smooth
			cutoff.  Because the cutoff derivative is compactly supported inside
			\((\eps_R,2\eps_R)\), the boundary terms vanish at the transition endpoints,
			and the remaining \(G_s'\) and \(G_s^2\) scalars are controlled by the
			projected Green estimates from Lemma~\ref{lem:critical-endpoint-jost-green}.
			Indeed, on the layer
			\(I_R=[\eps_R,2\eps_R]\),
			the sharper Green diagonal from
			Lemma~\ref{lem:critical-endpoint-jost-green} gives
			\[
			\int_{I_R}|q|^2\,ds
			\le C\eps_R^2\log(C/\eps_R)\,k[q],
			\]
			for projected critical complements and hence
			\[
			R^{-1}\eps_R^{-2}\int_{I_R}|q|^2\,ds
			\le C R^{-1}\log(C/\eps_R) k[q]=o_R(1)k[q].
			\]
			The \(G_s^2\) scalar is smaller, and mixed \(G_sD_sq\,q\) terms are reduced
			to the same scalar form by this smooth-cutoff integration by parts.  The
			conjugated difference \(D_s-\partial_s=O(s^{-1})\) is absorbed into the same
			\(O(R^{-1}\eps_R^{-2})\) transition scalar bound.  In high modes the
			corresponding endpoint and transition scalars are routed through the relative
			\(\tau_{N,R}\operatorname{Sep}_>\) bound or through the scalar-to-angular
			conversion used in Lemma~\ref{lem:high-mode-error-routing}.
			\item[(6)] \emph{Angular and mixed Jacobian terms.}  Angular Jacobian
			coefficients satisfy \(G_A=O_L(R^{-1})\).  The principal gradient shear was
			already separated in class (4), via
			Lemma~\ref{lem:flattened-shear-form}.  On the finite first-branch block, the
			lower-order angular Jacobian and half-density terms are grouped in
			Lemma~\ref{lem:finite-first-branch-hadamard}, together with the shear and
			radial-Jacobian artifacts.  On the radial-complement and high-mode blocks,
			they are first routed by Lemma~\ref{lem:angular-jacobian-ibp} above; the
			angular-kinetic part is charged to the class-(4) reserve, and the true scalar
			remainder has size
			\[
			C_{C,L}\alpha_h\sinh^{-2}(F_h)|w|^2,\qquad
			\alpha_h=\beta_R^2
			\bigl(\coth^2(F_h)+\operatorname{csch}^2(F_h)\bigr).
			\]
			Here \(\alpha_h\le C_C\) everywhere and \(\alpha_h\le C_CR^{-2}\) in the
			endpoint collar; the regular bulk is exponentially weighted by
			\(\sinh^{-2}(F_h)\).  The residual carries no angular derivative.  Although
			the weight
			\(\sinh^{-2}(F_h)\) depends on \(\theta\), the bounds
			\(0\le h\le C\), the protected origin where \(\beta_R=0\), and large \(R\)
			give a uniform comparison constant \(K(C)\) with
			\[
			K(C)^{-1}\sinh^{-2}s\le \sinh^{-2}(F_h)\le K(C)\sinh^{-2}s .
			\]
			Flat spherical orthogonality, this two-sided comparability, and
			\(\lambda_l\ge\lambda_{N+1}\) for \(l>N\) give the legitimate high-mode
			scalar-to-angular conversion
			\[
			C_{C,L}\int \alpha_h\sinh^{-2}(F_h)|w_>|^2
			\le C_{C,L}\lambda_{N+1}^{-1}\operatorname{Ang}[w_>],
			\]
			because these lower-order residuals no longer carry an angular derivative.
		\end{enumerate}
		No entry in this routing uses a critical Hardy inequality: critical
		finite-low diagonal or mixed terms are routed through the projected
		Green/Jost estimates from Lemma~\ref{lem:critical-endpoint-jost-green}.
	\end{definition}
	
	\begin{lemma}[High-mode error absorption and reserve]
		\label{lem:high-mode-error-routing}
		Fix \(N\ge N_A(n)\), the reserve fraction \(\rho_{\rm res}\in(0,1/4)\), and
		the collar exponent \(\delta_{\rm cut}\).  For \(0\le h\le C\) with
		\(\Lip(h)\le L\), let \(\operatorname{Err}_{\rm hi}\) denote the aggregate
		high-mode error from the routed coefficient classes after discarding the
		nonnegative pure high-mode singular-potential displacement.  This is a
		quadratic form on the high-angular subspace: it is the finite sum of the
		perturbative high-mode pieces left by the six coefficient classes, and it does
		not include the clean separated form \(\operatorname{Sep}_>\), the finite
		first-branch block, or the pure favorable-sign term just discarded.  There are
		quantities \(\sigma_N,\sigma_{N,R},\tau_{N,R}\), uniformly for the admissible
		heights \(h\), with
		\[
		\sigma_N\to0\quad(N\to\infty),\qquad
		\sigma_{N,R},\tau_{N,R}\to0\quad(R\to\infty\text{ for fixed }N),
		\]
		such that
		\[
		|\operatorname{Err}_{\rm hi}[w_>]|
		\leq
		\tau_{N,R}\operatorname{Sep}_>[w_>]
		+(\sigma_N+\sigma_{N,R})\operatorname{Ang}[w_>]
		+o_{N,C,L}(R^{-3})\|w_>\|^2 .
		\]
		After choosing \(N\) so that \(\sigma_N/c_A(n)\le\rho_{\rm res}/2\), and then
		enlarging \(R_0\) so that
		\[
		\tau_{N,R}+\sigma_{N,R}/c_A(n)\le \rho_{\rm res}/2,
		\]
		the reserve absorption inequality
		\[
		\rho_{\rm res}\operatorname{Sep}_>[w_>]
		+\operatorname{Err}_{\rm hi}[w_>]
		\ge -o_{N,C,L}(R^{-3})\|w_>\|^2
		\]
		holds.  Thus only the clean remainder
		\((1-\rho_{\rm res})\operatorname{Sep}_>[w_>]\) is passed to the
		block-form compression step.
		When this lemma is used together with the low/high physical cross routing in
		Lemma~\ref{lem:all-degree-lower-form-assembly}, the same statement allows an
		additional term
		\(\tau'_{N,R}\operatorname{Sep}_>[w_>]\), with
		\(\tau'_{N,R}\to0\) as \(R\to\infty\) for fixed \(N\), by replacing the last
		threshold condition with
		\[
		\tau_{N,R}+\tau'_{N,R}+\sigma_{N,R}/c_A(n)
		\le \rho_{\rm res}/2.
		\]
	\end{lemma}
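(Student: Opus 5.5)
The plan is to bound each high-mode piece of \(\operatorname{Err}_{\rm hi}\) class by class, following the routing in Definition~\ref{def:routed-coefficient-classes}, and to sort every bound into one of the three reserves: \(\tau_{N,R}\operatorname{Sep}_>\), \((\sigma_N+\sigma_{N,R})\operatorname{Ang}\), or \(o_{N,C,L}(R^{-3})\|w_>\|^2\). The absorption inequality should then follow from Lemma~\ref{lem:high-mode-angular-reserve}.

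\textbf{Class-by-class bounds.}

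Class (1), global scaling after the constant-height model is subtracted, and class (3), factors of \(a-1\), carry multiplicative coefficients of size \(O_C(R^{-1})\). These multiply either the shifted radial kinetic term or the centrifugal term of \(\operatorname{Sep}_>\). The only part not obviously relative to \(\operatorname{Sep}_>\) is the mass term \(\pi^2R^{-2}|w_l|^2\), which carries an extra \(R^{-1}\). Since \(\operatorname{Sep}_>\ge c_A\operatorname{Ang}\ge c_A\lambda_{N+1}K(C)^{-1}\sinh^{-2}R\,\|\cdot\|^2\) is useless at large \(s\), I will instead bound \(R^{-3}\|w_>\|^2\) directly as an \(o_{N,C,L}(R^{-3})\)-type term; the relevant coefficient is \(O(R^{-1})\) times \(R^{-2}\). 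The net contribution is \(\tau^{(1,3)}_{N,R}=O_C(R^{-1})\).

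Class (2) is discarded by the favorable sign already recorded.

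Class (4), the high shear, comes with the choice \(\eta_R=R^{-1/2}\) from Lemma~\ref{lem:flattened-shear-form}. This gives \(\tau\) and \(\sigma_{N,R}\) contributions of sizes \(R^{-1/2}\) and \(C_{C,L}R^{-3/2}\). The angular kinetic term in that lemma is comparable to \(\operatorname{Ang}\) by \(K(C)\) together with orthogonality of \(\nabla_\theta Y_l\) across degrees.

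Class (6) goes through Lemma~\ref{lem:angular-jacobian-ibp}. Its angular-kinetic part is charged again with the vanishing parameter. Its scalar residual converts via the displayed inequality to \(C_{C,L}\lambda_{N+1}^{-1}\operatorname{Ang}\), and this is what defines \(\sigma_N\to0\) as \(N\to\infty\).

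Class (5) consists of endpoint scalars \(O(R^{-1}s^{-2})\) and transition scalars \(O(R^{-1}\eps_R^{-2})\) on \(I_R\). For high modes, \(s^{-2}\lesssim\sinh^{-2}(F_h)\) on \(s\le1\), so the endpoint term is at most \(CR^{-1}\lambda_{N+1}^{-1}\operatorname{Ang}\), a \(\sigma_{N,R}\) term. The transition term satisfies \(R^{-1}\eps_R^{-2}\le R^{-1}\eps_R^{-2}\cdot s^{2}\sinh^{-2}(F_h)\cdot C\) on \(I_R\), which is \(\le CR^{-1}\sinh^{-2}(F_h)\) there. So it is again bounded by \(CR^{-1}\operatorname{Ang}/\lambda_{N+1}\). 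The mixed \(G_sD_sw\,w\) terms are handled by Young: one half goes into \(\tau_{N,R}\operatorname{Sep}_>\), and the other half becomes a scalar of the same type.

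\textbf{Absorption.} Summing gives the displayed bound on \(|\operatorname{Err}_{\rm hi}|\). Then Lemma~\ref{lem:high-mode-angular-reserve} gives \(\operatorname{Ang}\le c_A^{-1}\operatorname{Sep}_>\), so
\[
|\operatorname{Err}_{\rm hi}|\le\bigl(\tau_{N,R}+(\sigma_N+\sigma_{N,R})/c_A\bigr)\operatorname{Sep}_>+o(R^{-3})\|w_>\|^2\le\rho_{\rm res}\operatorname{Sep}_>+o(R^{-3})\|w_>\|^2
\]
under the stated choices of \(N\) and \(R_0\). This yields the reserve inequality, using \(\operatorname{Sep}_>\ge0\). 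Adding \(\tau'_{N,R}\) changes nothing.

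\textbf{Main obstacle.} The delicate point is class (5) in the transition layer and the mass part of classes (1) and (3). There the coefficient is not small relative to the radial kinetic form pointwise, and the estimate needs the comparison with \(\sinh^{-2}(F_h)\) near the endpoint. If that fails near \(s\sim\eps_R\), I would fall back on a one-dimensional Hardy bound valid for \(\nu_l>1/2\), namely \(\int s^{-2}|w_l|^2\le C_l\int|\partial_sw_l|^2\) with \(C_l\) bounded for \(l>N_A\). The cost of this fallback is that its uniformity in \(l\) would have to be checked.
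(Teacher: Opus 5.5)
Your architecture matches the paper's. You sort the high-mode pieces of \(\operatorname{Err}_{\rm hi}\) class by class into \(\tau_{N,R}\operatorname{Sep}_>\), \((\sigma_N+\sigma_{N,R})\operatorname{Ang}\) and \(o_{N,C,L}(R^{-3})\|w_>\|^2\), then absorb using \(\operatorname{Ang}\le c_A^{-1}\operatorname{Sep}_>\). Your absorption paragraph is exactly the paper's last step, and your endpoint and transition scalar conversions in class (5) are fine.

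\textbf{The gap.} It sits at the point you flagged as delicate, and your resolution there is wrong. An \(O_C(R^{-1})\) multiplicative error on the unshifted radial kinetic term splits as \(O_C(R^{-1})\operatorname{Sep}_>[w_>]+O_C(R^{-1})\,\pi^2R^{-2}\|w_>\|^2\). The second piece is \(O_C(R^{-3})\|w_>\|^2\), not \(o_{N,C,L}(R^{-3})\|w_>\|^2\): ``\(O(R^{-1})\) times \(R^{-2}\)'' is exactly the critical order.

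It really is that large. For \(w_>\) a degree-\((N+1)\) first-branch profile, \(\int|\partial_sw|^2=\pi^2R^{-2}\|w\|^2+O(R^{-3}\|w\|^2)\). A Jacobian factor \(1+O(h/R)\) then produces an error of exact order \(\bar hR^{-3}\|w\|^2\). This is the high-block counterpart of the \(2\pi^2h\) coefficient, and it is the same size as the margin \(\delta_{\rm eff}R^{-3}\).

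The same defect is worse in your class (4). With \(\eta_R=R^{-1/2}\), the radial side \(\eta_R\int|\partial_sw_>|^2\) leaves \(\pi^2R^{-5/2}\|w_>\|^2\gg R^{-3}\|w_>\|^2\). If the radial factor is the physical or conjugated derivative \(D_s\), it even carries an order-one \(a_n^2\|w_>\|^2\) mass. Your class-(5) mixed \(G_sD_sw\,w\) Young split has the same problem unless its radial weight is \(o(R^{-1})\). The Hardy fallback cannot help, because this mass lives in the bulk \(s\sim R\), not at the endpoint. The paper's own routing of classes (3) and (4) writes these terms directly as \(\tau_{N,R}\operatorname{Sep}_>\), so it tacitly needs the same correction.

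\textbf{A repair.} Use an estimate proved in Lemma~\ref{lem:finite-block-compression}. For \(l>N\), \(c_l\ge c_{N+1}\) and \(\sinh^{-2}(F_h)\ge\sinh^{-2}s\) give, by Rayleigh--Ritz for \(H_{N+1,R}\),
\[
\operatorname{Sep}_>[w_>]\ge\bigl(b_{n+2(N+1)}+o_N(1)\bigr)R^{-3}\|w_>\|^2,\qquad b_{n+2(N+1)}\to\infty .
\]
Hence \(C_CR^{-3}\|w_>\|^2\le 2C_C\,b_{n+2(N+1)}^{-1}\operatorname{Sep}_>[w_>]\) for large \(R\). This is a cutoff-small multiple of \(\operatorname{Sep}_>\), absorbed like \(\sigma_N/c_A\) after enlarging \(N\). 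So the \(N\)-small reserve must be allowed on \(\operatorname{Sep}_>\), not only on \(\operatorname{Ang}\). Alternatively, choosing \(\chi\) monotone gives \(a=1-\beta_R'h\le1\), so the radial-kinetic Jacobian factor has a favorable sign and can be dropped for the reserve inequality, like class (2).

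For the shear, the floor does not rescue \(\eta_R=R^{-1/2}\). Instead, split off the half-density part \(a_n\coth(F_h)w\) of the radial derivative as a class-(6) scalar, and put the small weight on the flat radial side, for instance \(R^{-3/2}\int|\partial_sw_>|^2+C_{C,L}R^{-1/2}\operatorname{Ang}[w_>]\). Its mass leftover is then \(O(R^{-7/2})\|w_>\|^2\). In class (5), integrate the mixed term by parts against the compactly supported \(G_s\).

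\textbf{A smaller inconsistency in class (6).} You cannot both charge the angular-kinetic part of Lemma~\ref{lem:angular-jacobian-ibp} with a vanishing \(\eta\) and keep the scalar residual at \(C_{C,L}\lambda_{N+1}^{-1}\operatorname{Ang}\), because that residual carries a factor \(\eta^{-1}\). Taking \(\eta=\lambda_{N+1}^{-1/2}\) gives an \(O(\lambda_{N+1}^{-1/2})\operatorname{Ang}\) contribution to \(\sigma_N\).
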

	
	\begin{proof}
		The high-mode classes fix the accounting.  The symbol
		\(\operatorname{Err}_{\rm hi}\) denotes a finite sum of the high-mode pieces
		left after the six coefficient classes have been expanded once and each
		piece has been assigned to exactly one route: favorable sign, scalar
		high-angular conversion, \(R\)-vanishing shear reserve, transition reserve,
		or exponentially weighted tail.  This classification organizes the
		perturbative part of the pulled-back form; it is not a second decomposition of
		the clean separated form \(\operatorname{Sep}_>\).  By definition,
		\(\operatorname{Err}_{\rm hi}\) excludes the pure high-mode
		singular-potential displacement.  This exclusion is legitimate because
		class (2) gives
		\[
		c_l\bigl(\sinh^{-2}(F_h)-\sinh^{-2}s\bigr)|w_l|^2\ge0
		\]
		for \(N\ge N_A(n)\), because \(c_l\ge0\), \(0\le\beta_Rh\), and
		\(F_h=s-\beta_Rh\le s\).  Thus this term is removed before the reserve
		estimate is applied and is not used as an additional positive term in the
		absorption inequality.
		In contrast, the lower-order scalar residuals from the angular Jacobian and
		half-density coefficients have no angular derivative, so the scalar-to-angular
		conversion is legitimate and gives
		\[
		C_{C,L}\lambda_{N+1}^{-1}\operatorname{Ang}[w_{>}].
		\]
		Here the factor \(C_{C,L}\lambda_{N+1}^{-1}\) is one contribution to
		\(\sigma_N\), and it tends to zero when the high cutoff \(N\) is increased.
		The principal shear contribution is instead a kinetic error controlled by
		Lemma~\ref{lem:flattened-shear-form}, before angular projection; it contributes
		vanishing multiples of \(\operatorname{Ang}[w_>]\) and
		\(\operatorname{Sep}_>[w_>]\) to the reserve.  The mollifier-sensitive
		angular Jacobian scalar is resolved by
		Lemma~\ref{lem:angular-jacobian-ibp}, uniformly in the smooth approximation
		to \(h\).
		The remaining high-mode errors come only from multiplicative, transition, and
		Jacobian errors, and are absorbed by the cutoff-small and \(R\)-vanishing reserve before the
		remaining high block is compressed.
		Equivalently, after \(N\) is fixed the high-mode error satisfies
		\[
		|\operatorname{Err}_{\rm hi}[w_>]|
		\leq
		\tau_{N,R}\operatorname{Sep}_>[w_>]
		+(\sigma_N+\sigma_{N,R})\operatorname{Ang}[w_>]
		+o_{N,C,L}(R^{-3})\|w_>\|^2 .
		\]
		Here \(\sigma_N\to0\) as \(N\to\infty\), while
		\(\sigma_{N,R},\tau_{N,R}\to0\) as \(R\to\infty\) for fixed \(N\).
		Since
		\(\operatorname{Ang}[w_>]\le c_A(n)^{-1}\operatorname{Sep}_>[w_>]\)
		by Lemma~\ref{lem:high-mode-angular-reserve}, \(N\) is chosen so large that
		\(\sigma_N/c_A(n)\le\rho_{\rm res}/2\), and then the final \(R\)-threshold is
		enlarged so that
		\[
		\tau_{N,R}+\sigma_{N,R}/c_A(n)\le \rho_{\rm res}/2.
		\]
		Then
		\[
		\rho_{\rm res}\operatorname{Sep}_>[w_>]
		+\operatorname{Err}_{\rm hi}[w_>]
		\ge -o_{N,C,L}(R^{-3})\|w_>\|^2.
		\]
		Only the clean remainder
		\((1-\rho_{\rm res})\operatorname{Sep}_>[w_>]\) is passed to the
		block-form compression step.  This is a scalar decomposition of the
		nonnegative separated high-mode form:
		\[
		\operatorname{Sep}_>
		=
		\rho_{\rm res}\operatorname{Sep}_>
		+(1-\rho_{\rm res})\operatorname{Sep}_>.
		\]
		The first summand is spent only in this reserve absorption estimate.  The
		finite-block scalar compression below uses only the second summand, so the
		high-mode reserve is not reopened or counted twice.
		If the low/high physical cross terms in
		Lemma~\ref{lem:all-degree-lower-form-assembly} contribute an additional
		\(\tau'_{N,R}\operatorname{Sep}_>\), this is treated exactly like
		\(\tau_{N,R}\operatorname{Sep}_>\): for fixed \(N\) the coefficient
		\(\tau'_{N,R}\) tends to zero as \(R\to\infty\), and the last threshold is
		replaced by
		\[
		\tau_{N,R}+\tau'_{N,R}+\sigma_{N,R}/c_A(n)
		\le \rho_{\rm res}/2.
		\]
	\end{proof}
	
	\begin{lemma}[First-branch/radial-complement mixed terms]
		\label{lem:first-radial-complement-mixed}
		Fix \(N,C,L\) and the collar exponent.  Let
		\(p=J_Rf_{\leq}\) be a finite first-branch vector and let \(q_{\rm rad}\) be
		in the corresponding finite angular radial complement, both written in the
		flat unitary representation used above.  For every fixed \(\eta>0\), the
		mixed part of the shifted pulled-back form, after the finite first-branch
		Hadamard block has been separated, satisfies
		\[
		|\operatorname{Mix}_R(p,q_{\rm rad})|
		\le
		\eta\,{\mathcal Q}_{\rm rad}[q_{\rm rad}]
		+o_{N,C,L,\eta}(R^{-3})\|f_{\leq}\|^2
		+o_{N,C,L,\eta}(R^{-3})\|q_{\rm rad}\|^2 .
		\]
		Here \({\mathcal Q}_{\rm rad}\) denotes the positive radial-complement form
		whose spectral margin is
		\[
		{\mathcal Q}_{\rm rad}[q_{\rm rad}]
		\ge c_{\rm rad}R^{-2}\|q_{\rm rad}\|^2
		\]
		after the coefficient losses have been assigned.
	\end{lemma}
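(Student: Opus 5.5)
The plan is to reduce the mixed term to a finite list of bilinear integrals between a fixed finite set of first-branch radial profiles $y_{l,R}$, $l\le N$, and projected radial complements. Each integral is then estimated by a weighted Cauchy--Schwarz/Young inequality. On the first-branch side the weight is paid by the explicit endpoint and bulk bounds of Lemma~\ref{lem:finite-box-jost-data} and the Stage~5 bounds of Lemma~\ref{lem:finite-first-branch-hadamard}. On the complement side it is paid by the projected Green/Jost controls of Lemma~\ref{lem:critical-endpoint-jost-green}.

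\emph{Step 1: the unperturbed mixed part vanishes.} In the round-ball separated form, $p$ and $q_{\rm rad}$ are orthogonal in each degree. Indeed, $q_{\rm rad}$ lies in the spectral complement of $y_{l,R}$ for $H_{l,R}$, so $\langle H_{l,R}y_{l,R},q_l\rangle=e_{l,R}\langle y_{l,R},q_l\rangle=0$. Hence $\operatorname{Mix}_R$ consists only of the perturbative coefficient classes (1)--(6) of Definition~\ref{def:routed-coefficient-classes}, evaluated with one slot on $p$ and one slot on $q_{\rm rad}$.

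\emph{Step 2: class-by-class estimates.} For each class, write the mixed integral as $\int a(s,\theta)\,X p\,\overline{Y q}$, where $X,Y$ range over identity, $D_s$ and $\nabla_\theta$.
\begin{itemize}
\item \emph{Bulk classes (1), (3), (4), (6) on $s\ge 2\eps_R$.} The coefficient is $O_{C,L}(R^{-1})$, or exponentially weighted. Angular derivatives on $p$ cost only $C_N$. Using $\|D_sy_{l,R}\|_2+\|(s/R)\partial_sy_{l,R}\|_2\le C_NR^{-1}$ and Young with parameter $\eta$ gives
\[
\eta\,\mathcal Q_{\rm rad}[q_{\rm rad}]+C\eta^{-1}R^{-2}\|Xp\|^2\lesssim \eta\,\mathcal Q_{\rm rad}+C_{N}\eta^{-1}R^{-4}\|f_{\le}\|^2,
\]
and the last term is $o(R^{-3})\|f_{\le}\|^2$.
\item \emph{Singular collar classes (2) and (5) on $[\eps_R,1]$.} Here the coefficients are $R^{-1}s^{-2}$, or $R^{-1}\eps_R^{-2}$ on the layer $I_R$. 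Split as
\[
R^{-1}\!\int s^{-2}|p||q|\le \Bigl(R^{-1}\!\int s^{-2}|p|^2\Bigr)^{1/2}\Bigl(R^{-1}\!\int s^{-2}|q|^2\Bigr)^{1/2}.
\]
The first factor is $O_N(R^{-2}(\log R)^{1/2})\|f_{\le}\|$ by $|y_{l,R}|\le C_NR^{-3/2}s^{1/2}$. The second is $o_R(1)k[q]^{1/2}$ by Lemma~\ref{lem:critical-endpoint-jost-green}. On $I_R$, use the sharper mass bound $\int_{I_R}|q|^2\le C\eps_R^2\log(C/\eps_R)\,k[q]$. Mixed $D_s$ terms are handled by the derivative collar estimate of the same lemma.
\item \emph{Noncritical channels.} These have better endpoint powers and use only the branch gap of Lemma~\ref{lem:radial-branch-gap}.
\item \emph{Conversion to $\mathcal Q_{\rm rad}$.} Since $k[q]$ is comparable to $\mathcal Q_{\rm rad}$ before reservation, Young again gives $\eta\,\mathcal Q_{\rm rad}+o_{N,\eta}(R^{-3})\|f_{\le}\|^2$.
\end{itemize}

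\emph{Step 3: collecting terms.} Only finitely many degrees $l\le N$ and finitely many classes occur, so the remainders sum to $o_{N,C,L,\eta}(R^{-3})$. Any leftover scalar $q$-terms are controlled either by $\eta\,\mathcal Q_{\rm rad}$ or by the $o(R^{-3})\|q_{\rm rad}\|^2$ slot, using the margin $c_{\rm rad}R^{-2}\|q\|^2\le\mathcal Q_{\rm rad}$.

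The main obstacle is the critical $(n,l)=(2,0)$ collar term in class (5). There the transition coefficient $R^{-1}\eps_R^{-2}$ and the logarithmic endpoint behaviour make the Cauchy--Schwarz split borderline. I would first try the smooth-cutoff integration by parts used in class (5), converting $G_sD_s$ terms into scalar $G_s'$ terms. I would then check the exponent count
\[
R^{-1}\eps_R^{-2}\cdot R^{-3/2}\eps_R^{3/2}\cdot \eps_R\sqrt{\log R}\;=\;o(R^{-3/2}),
\]
which holds since $\delta_{\rm cut}<1$. This yields the required product $o(R^{-3/2})\|f\|\cdot k[q]^{1/2}$.
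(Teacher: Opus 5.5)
Your proposal is correct and follows the paper's proof essentially step for step. The unperturbed cross term vanishes because $q_{\rm rad}$ is spectrally projected off $y_{l,R}$ in each fiber, and each perturbative class is then split by Cauchy--Schwarz/Young, with the collar scalar term bounded by $\bigl(R^{-1}\int s^{-2}|p|^2\bigr)^{1/2}\bigl(R^{-1}\int s^{-2}|q_{\rm rad}|^2\bigr)^{1/2}$ exactly as in the paper. Only minor touch-ups are needed. Your bulk display covers $X=D_s$; for $X$ the identity or $\nabla_\theta$ the smallness must come from the $\sinh^{-2}(F_h)$ weight via $\int\sinh^{-2}(F_h)|p|^2\le C_NR^{-3}\|f_{\le}\|^2$. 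On $I_R$ one has $\bigl(\int_{I_R}|p|^2\bigr)^{1/2}\le C_NR^{-3/2}\varepsilon_R\|f_{\le}\|$, not $\varepsilon_R^{3/2}$, though your conclusion is unaffected. Finally, the paper treats $G_s\,p\,D_sq_{\rm rad}$ by direct Cauchy--Schwarz with $\int_{I_R}|D_sq_{\rm rad}|^2\le o(1)R^2\mathcal Q_{\rm rad}[q_{\rm rad}]$, so your integration by parts is optional.
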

	
	\begin{proof}
		In the exact coordinate-unitary flat representation the mass form is the
		reference \(L^2(ds\,d\theta)\) inner product.  Hence the mass cross term
		between \(J_Rf_{\leq}\) and \(q_{\rm rad}\) vanishes exactly.  The
		unperturbed separated shifted form has no first-branch/radial-complement
		cross term either, because \(J_R\) uses the first eigenfunction of each fixed
		degree and \(q_{\rm rad}\) is projected off that branch in the same flat
		fiber.
		
		All remaining mixed terms are perturbative coefficient classes.  For the
		regular coefficient, angular-drift, and shear classes, the first branch
		satisfies, uniformly on the fixed finite angular sector,
		\[
		\int |D_sp|^2\le C_NR^{-2}\|f_{\leq}\|^2,\qquad
		\int\sinh^{-2}(F_h)|p|^2\le C_NR^{-3}\|f_{\leq}\|^2,
		\]
		and its angular form is \(O_N(R^{-3})\|f_{\leq}\|^2\).  The shear coefficient
		has
		\[
		{\beta_R|\nabla h|\over \sinh(F_h)}\le C_{C,L}R^{-1}.
		\]
		Cauchy--Schwarz and Young's inequality therefore route the derivative factor
		on \(q_{\rm rad}\) into \(\eta{\mathcal Q}_{\rm rad}[q_{\rm rad}]\), while
		the first-branch side costs at most
		\[
		C_{\eta,N,C,L}R^{-2}\int |D_sp|^2
		+C_{\eta,N,C,L}R^{-2}\int\sinh^{-2}(F_h)|p|^2
		=
		o_{N,C,L,\eta}(R^{-3})\|f_{\leq}\|^2 .
		\]
		The purely angular first-order drift terms are even smaller on the finite
		first branch, because their coefficients are \(O_L(R^{-1})\) and the
		weighted first-branch scalar or angular forms are \(O_N(R^{-3})\).
		
		It remains to check the endpoint coefficient classes, where pointwise
		smallness is false.  In the critical channel the finite first-branch Jost
		bounds give, on \(0<s\le2\eps_R\) and then on the fixed collar,
		\[
		|p(s,\theta)|+|sD_sp(s,\theta)|
		\le C_NR^{-3/2}s^{1/2}\|f_{\leq}\|.
		\]
		The projected Green estimates of Lemma~\ref{lem:critical-endpoint-jost-green}
		give
		\[
		R^{-1}\int_{\eps_R}^{1}s^{-2}|q_{\rm rad}|^2=o_R(1){\mathcal Q}_{\rm rad}[q_{\rm rad}],
		\qquad
		R^{-2}\int_{\eps_R}^{1}|D_sq_{\rm rad}|^2=o_R(1){\mathcal Q}_{\rm rad}[q_{\rm rad}]
		\]
		in the critical projected complement, and stronger estimates hold in the
		noncritical finite set of degrees.  Thus a typical scalar cross term with
		coefficient \(O(R^{-1}s^{-2})\) is bounded by
		\[
		\begin{aligned}
			R^{-1}\int_{\eps_R}^{1}s^{-2}|p\,q_{\rm rad}|
			&\le
			\left(R^{-1}\int_{\eps_R}^{1}s^{-2}|q_{\rm rad}|^2\right)^{1/2}
			\left(R^{-1}\int_{\eps_R}^{1}s^{-2}|p|^2\right)^{1/2}  \\
			&\le
			o_R(1)^{1/2}{\mathcal Q}_{\rm rad}[q_{\rm rad}]^{1/2}
			\,O_N(R^{-2}\sqrt{\log R})\|f_{\leq}\|.
		\end{aligned}
		\]
		Young's inequality makes this
		\[
		\eta{\mathcal Q}_{\rm rad}[q_{\rm rad}]
		+o_{N,C,L,\eta}(R^{-3})\|f_{\leq}\|^2 .
		\]
		The transition terms are the same calculation on
		\([\eps_R,2\eps_R]\), using
		\(\int_{I_R}|q_{\rm rad}|^2\le C\eps_R{\mathcal Q}_{\rm rad}[q_{\rm rad}]\)
		for scalar transition coefficients.  For mixed transition terms with a
		derivative on \(q_{\rm rad}\), no integration by parts is needed.  Since
		\(|G_s|\le C_CR^{-1}\eps_R^{-1}\) on \(I_R\),
		\[
		\begin{aligned}
			\int_{I_R}|G_s p\,D_sq_{\rm rad}|
			&\le
			C_CR^{-1}\eps_R^{-1}
			\left(\int_{I_R}|p|^2\right)^{1/2}
			\left(\int_{I_R}|D_sq_{\rm rad}|^2\right)^{1/2}  \\
			&\le
			o_R(1)^{1/2}R^{-3/2}
			\|f_{\leq}\|\,{\mathcal Q}_{\rm rad}[q_{\rm rad}]^{1/2}.
		\end{aligned}
		\]
		Here
		\(\int_{I_R}|p|^2\le C_NR^{-3}\eps_R^2\|f_{\leq}\|^2\), while
		Lemma~\ref{lem:critical-endpoint-jost-green} gives
		\(\int_{I_R}|D_sq_{\rm rad}|^2\le o_R(1)R^2{\mathcal Q}_{\rm rad}[q_{\rm rad}]\)
		in the critical channel, with stronger estimates in the noncritical finite
		set.  Young's inequality again gives
		\[
		\eta{\mathcal Q}_{\rm rad}[q_{\rm rad}]
		+o_{N,C,L,\eta}(R^{-3})\|f_{\leq}\|^2.
		\]
		The conjugated difference \(D_s-\partial_s=O(s^{-1})\) is absorbed into the
		same scalar transition estimates above.  The regular tail \(s\ge1\) carries
		an exponentially decaying weight and is controlled by the same Young splitting
		with an \(o(R^{-3})\) first-branch cost.
		
		Choosing \(\eta\) as part of the fixed radial-complement reserve, absorbing
		the finite first-branch \(o(R^{-3})\) cost into the trailing remainder, and
		then enlarging \(R_0\) proves the stated mixed estimate.
	\end{proof}
	
	\begin{lemma}[All-degree lower-form assembly]
		\label{lem:all-degree-lower-form-assembly}
		With the constants chosen in the order recorded in
		Subsection~\ref{sec:constants}, write
		\[
		w=J_Rf_{\leq}+q_{\rm rad,\leq}+w_{>},\qquad f_{\leq}=P_Nf .
		\]
		Then, uniformly for \(0\le h\le C\) and \(\Lip(h)\le L\), the pulled-back
		shifted physical Dirichlet form satisfies
		\[
		Q_h[w]-E_R\|w\|^2
		\geq
		R^{-3}B_N^h[f_{\leq},w_{>}]
		-(\delta_{\rm eff}/32)R^{-3}\|f_{\leq}\|^2
		+c_{\rm rad}R^{-2}\|q_{\rm rad,\leq}\|^2
		-o_{N,C,L}(R^{-3})\|w\|^2,
		\]
		where
		\[
		B_N^h[f_{\leq},w_{>}]
		=
		\langle f_{\leq},P_NL_hP_Nf_{\leq}\rangle
		+R^3(1-\rho_{\rm res})\operatorname{Sep}_>[w_{>}]
		-\frac{M_C^2}{4\eta_{\rm gap}}\|w_>\|^2.
		\]
	\end{lemma}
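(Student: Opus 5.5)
The plan is to expand the quadratic form along the orthogonal decomposition \(w=p+q+w_>\), with \(p=J_Rf_{\le}\) and \(q=q_{\rm rad,\le}\), using the six-class identity \eqref{eq:pulled-back-six-class-decomposition}. After subtracting \(E_R\|w\|^2\), this writes \(Q_h[w]-E_R\|w\|^2\) as a sum of three kinds of terms: the three diagonal blocks, the three cross terms \(p\)--\(q\), \(p\)--\(w_>\), \(q\)--\(w_>\), and the fixed-\(N\) remainder \(\mathcal R_{N,R}\). Because the flat representation is unitary, the mass form carries no cross terms. Because \(J_R\) uses exact fiberwise first eigenfunctions and \(q\) is projected off them in each degree \(l\le N\), the unperturbed separated form is block diagonal. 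So every cross term is a perturbative coefficient term from classes (1)--(6).

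The diagonal blocks are handled in order as follows.
\begin{itemize}
\item[(a)] For the low block, Lemma~\ref{lem:finite-first-branch-hadamard} gives \(R^{-3}\langle f_{\le},P_NL_hP_Nf_{\le}\rangle+o_{N,C,L}(R^{-3})\|f_{\le}\|^2\).
\item[(b)] For the radial complement, the raw separated form fiberwise satisfies \(\ge c_{\rm rad}^0R^{-2}\|q\|^2\) by Lemma~\ref{lem:radial-branch-gap}. The class (1)--(6) errors on \(q\) are charged to an \(\eta_{\rm form}\) fraction of this form; this uses Lemmas~\ref{lem:flattened-shear-form}, \ref{lem:angular-jacobian-ibp} and \ref{lem:critical-endpoint-jost-green}, plus the \(o(R^{-3})\) remainders. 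The reserved \(\eta\)-fraction is kept for the mixed terms, which leaves \(c_{\rm rad}R^{-2}\|q\|^2\).
\item[(c)] For the high block, write \(\operatorname{Sep}_>=\rho_{\rm res}\operatorname{Sep}_>+(1-\rho_{\rm res})\operatorname{Sep}_>\) and apply Lemma~\ref{lem:high-mode-error-routing} to the first summand. The favorable-sign centrifugal displacement from class (2) is discarded. This leaves \((1-\rho_{\rm res})\operatorname{Sep}_>[w_>]-o(R^{-3})\|w_>\|^2\).
\end{itemize}

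The cross terms are treated next. Lemma~\ref{lem:first-radial-complement-mixed} bounds the \(p\)--\(q\) term by \(\eta\mathcal Q_{\rm rad}[q]+o(R^{-3})(\|f_{\le}\|^2+\|q\|^2)\). The \(q\)--\(w_>\) term involves different angular fibers. Its only nonvanishing pieces come from \(\theta\)-dependent coefficients (shear, angular Jacobian, \(h\)-multiplication). Since both factors carry a radial complement or high mode, I will Young-split them into \(\eta_{\rm form}\mathcal Q_{\rm rad}\) plus \(\tau'_{N,R}\operatorname{Sep}_>+\sigma\operatorname{Ang}\). The latter pieces go into the high reserve, which Lemma~\ref{lem:high-mode-error-routing} allows for.

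The \(p\)--\(w_>\) coupling is the term I expect to be the main obstacle. My first attempt is to isolate its leading part as the bounded multiplication \(2\pi^2R^{-3}\langle f_{\le},h\,\cdot\rangle\) against the first-branch-like component of \(w_>\), bounded by \(M_CR^{-3}\|f_{\le}\|\|w_>\|\) since \(M_C>2\pi^2C\). The leading part comes from the Hadamard flux, with \(\partial_sy_{l,R}(R)\) uniformly of size \(R^{-3/2}\). The leading flux term, however, involves the boundary trace \(\partial_s w_l(R)\) of a general high-mode vector. For this I would use the Rellich boundary identity, or write \(\partial_s w_l(R)\) through a trace inequality against \(\operatorname{Sep}_>\); the non-leading part then becomes \(\tau'_{N,R}\operatorname{Sep}_>\). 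The remaining low/high corrections (shear, Jacobian, transition) carry an extra \(R^{-1}\) or are controlled by the first-branch bounds \(\int|D_sp|^2\le C_NR^{-2}\), \(\int\sinh^{-2}|p|^2\le C_NR^{-3}\). So they give \(o(R^{-3})\|f_{\le}\|^2+\tau'_{N,R}\operatorname{Sep}_>\). Finally the Young step \(-M_CR^{-3}\|f\|\|w_>\|\ge-\eta_{\rm gap}R^{-3}\|f\|^2-\frac{M_C^2}{4\eta_{\rm gap}}R^{-3}\|w_>\|^2\), with \(\eta_{\rm gap}=\delta_{\rm eff}/32\), produces the stated form \(B_N^h\). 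The last step is to enlarge \(R_0\) so that every \(o_{N,C,L}(R^{-3})\) remainder is absorbed into the final \(-o_{N,C,L}(R^{-3})\|w\|^2\) term, using \(\|f_{\le}\|,\|q\|,\|w_>\|\le\|w\|\).
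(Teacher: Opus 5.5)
Your outline follows the paper's assembly almost step for step. You use the Hadamard lemma on the low block and the branch gap with a reserved $\eta_{\rm form}$ fraction on $q_{\rm rad,\le}$. You split $\operatorname{Sep}_>$ by $\rho_{\rm res}$ and discard the favorable class-(2) term. You make one Young split of an $M_CR^{-3}\|f_{\le}\|\,\|w_>\|$ coupling with $\eta_{\rm gap}=\delta_{\rm eff}/32$. Your explicit Young routing of the $q_{\rm rad,\le}$--$w_>$ cross terms fills in something the paper leaves inside ``summing the six routed classes''. Identifying the leading low/high term with the first-branch-like part of $w_>$ is also what the paper does.

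The step that fails is how you control the rest of that coupling. You want to handle the flux factor $\partial_sw_l(R)$ of a \emph{general} high-mode vector by the Rellich identity or by a trace inequality against $\operatorname{Sep}_>$. Neither is available.
\begin{itemize}
\item The Hadamard--Rellich identity of Lemma~\ref{lem:finite-first-branch-hadamard} yields $\int h\,\partial_\nu u\,\partial_\nu v$ plus a commutator only after the eigenvalue equation is used in \emph{both} slots. With $v=w_>$ arbitrary, the leftover bulk term $\int (X_hp)\,\Delta w_>$ is not controlled by the form, and integrating it by parts just cancels the flux again.
\item $\operatorname{Sep}_>$ is an $H^1$-level form, so it bounds no normal-derivative trace. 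Boundary-layer profiles $\epsilon^{1/2}\phi((R-s)/\epsilon)$ with $\phi(0)=0$, $\phi'(0)=1$ have bounded radial energy but slope of order $\epsilon^{-1/2}$ at $s=R$. For a general form-domain vector the trace does not exist at all.
\end{itemize}

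The paper avoids this by never taking the boundary slope of anything but an exact eigenfunction. It splits $w_>=J_R^{>}g_>+q_>$ using the degree-adapted first-branch lift in degrees $>N$.
\begin{itemize}
\item The leading coupling is then $2\pi^2R^{-3}\langle f_{\le},h\,g_>\rangle$ up to slope and normalization mismatches. Since $\|g_>\|\le\|w_>\|$, this sits under $M_CR^{-3}\|f_{\le}\|\,\|w_>\|$.
\item The complement $q_>$ is treated not as a flux but as a bulk coefficient term. It is estimated by Cauchy--Schwarz with $\int|D_sJ_Rf_{\le}|^2\le C_NR^{-2}\|f_{\le}\|^2$ and routed into $\tau'_{N,R}\operatorname{Sep}_>+o_{N,C,L}(R^{-3})\|f_{\le}\|^2$.
\end{itemize}

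You could also drop the flux picture for this cross term altogether. The leading $h$-dependent coefficient of the conjugated radial kinetic term is $(\partial_sF_h)^{-2}-1=2\beta_R'h+O(R^{-2})$. Apply Cauchy--Schwarz with $\|\partial_sp\|_{L^2(\{s\ge\eps_R\})}\le(\pi/R)(1+o(1))\|f_{\le}\|$ and with $\|\partial_sw_>\|^2\le\operatorname{Sep}_>[w_>]+\pi^2R^{-2}\|w_>\|^2$, which holds because $c_l\ge0$ for $l>N$. This gives a coupling of size about $2\pi^2CR^{-3}\|f_{\le}\|\,\|w_>\|$, within $M_C$, plus $\tau'_{N,R}\operatorname{Sep}_>+o(R^{-3})\|f_{\le}\|^2$. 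With either repair in place of the trace argument, the rest of your outline closes as in the paper.
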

	
	\begin{proof}
		Start from the exact physical coordinate pullback
		\(r=F_h(s,\theta)=s-\beta_R(s)h(\theta)\), followed by the fixed radial
		Schrodinger half-density conjugation.  The radial and angular gradient
		principal parts are given by Lemma~\ref{lem:flattened-shear-form}; the
		remaining metric, Jacobian, half-density, and singular-potential differences
		are precisely the six coefficient classes in
		Definition~\ref{def:routed-coefficient-classes}.  The finite
		first-radial block \(J_RP_Nf\) is treated as a physical finite-dimensional
		Hadamard problem, not by termwise separated-coordinate estimates.  The
		exact-form conclusion of Lemma~\ref{lem:finite-first-branch-hadamard} gives
		the finite low block
		\[
		R^{-3}\langle f_{\leq},P_N(T_n+2\pi^2h+b_n)P_Nf_{\leq}\rangle
		+o_{N,C,L}(R^{-3})\|f_{\leq}\|^2 .
		\]
		
		Terms with one factor in the finite first branch \(J_RP_Nf\) and one factor
		in the high block \(w_>\) are not identified with the angular multiplication
		matrix \(2\pi^2h\).  The finite Hadamard lemma gives that coefficient only
		for pairs of degrees \(\le N\).  Instead these low/high physical cross terms
		are estimated in the pulled-back form before compression.  The genuinely
		order-\(R^{-3}\) part is the bounded multiplication coupling generated by
		the boundary-height coefficient on the first-radial part of the high angular
		sector.  Writing, only for this estimate, the high block as
		\(w_>=J_R^{>}g_>+q_>\), where \(J_R^{>}\) is the degree-adapted first-radial
		lift in degrees \(>N\), this leading part has the form
		\[
		2\pi^2R^{-3}\langle f_{\leq},h\,g_>\rangle_{L^2(\SSph^{n-1})},
		\]
		up to finite-\(R\) normalization errors.  Those errors are only the mismatch
		between the exact degree-dependent first-branch boundary slopes and the common
		\(2\pi^2R^{-3}\) flux coefficient, together with the difference between the
		flattened \(L^2\) normalization of \(J_R^{>}g_>\) and the angular norm of
		\(g_>\).  The fixed-degree boundary-slope asymptotic makes the former
		\(o_{N,C,L}(R^{-3})\|f_{\leq}\|\,\|g_>\|\) for the finite side, and the latter
		is an \(R\)-vanishing perturbation of the high separated form after
		Cauchy--Schwarz.  Hence both are assigned to the same finite-low
		\(o_{N,C,L}(R^{-3})\) cost and high-side
		\(\tau'_{N,R}\operatorname{Sep}_>\) reserve as the other non-leading
		low/high classes.  The high radial-complement part
		\(q_>\), transition pieces, and the non-leading boundary-trace corrections
		are likewise among the remaining low/high classes routed below into
		\(\tau'_{N,R}\operatorname{Sep}_>\) and the finite-low
		\(o_{N,C,L}(R^{-3})\) remainder.  Since \(\|M_h\|_{2\to2}\le C\), the choice
		\(M_C=4\pi^2C+1\) in Subsection~\ref{sec:constants} dominates the displayed
		bounded multiplication coupling after the final enlargement of \(R_0\).
		Hence it is bounded below
		by
		\[
		-M_CR^{-3}\|f_{\leq}\|\,\|w_>\|.
		\]
		Young's inequality with parameter \(\eta_{\rm gap}\) gives
		\[
		-M_CR^{-3}\|f_{\leq}\|\,\|w_>\|
		\ge
		-\eta_{\rm gap}R^{-3}\|f_{\leq}\|^2
		-\frac{M_C^2}{4\eta_{\rm gap}}R^{-3}\|w_>\|^2 .
		\]
		This is the fixed low/high Schur cost recorded in \(B_N^h\); it is not an
		\(o(R^{-3})\) term.
		
		The remaining regular, shear, Jacobian, and transition low/high classes have
		an additional small coefficient after the leading bounded multiplication
		coupling has been separated.  The finite-low first branch has
		\[
		\int |D_sJ_RP_Nf|^2\le C_NR^{-2}\|f_{\leq}\|^2,\qquad
		\int\sinh^{-2}(F_h)|J_RP_Nf|^2\le C_NR^{-3}\|f_{\leq}\|^2,
		\]
		and finite angular form \(O_NR^{-3}\|f_{\leq}\|^2\).  Applying
		Cauchy--Schwarz and Young's inequality to the regular, shear, Jacobian, and
		transition coefficient classes after the fixed bounded-multiplication part
		has been removed gives a high-side contribution of the form
		\[
		\tau'_{N,R}\operatorname{Sep}_>[w_>],
		\qquad \tau'_{N,R}\to0\quad(R\to\infty,\ N\text{ fixed}),
		\]
		and a low-side contribution
		\[
		o_{N,C,L}(R^{-3})\|f_{\leq}\|^2 .
		\]
		The high-side term is folded into the reserve absorption by using the
		strengthened threshold
		\[
		\tau_{N,R}+\tau'_{N,R}+\sigma_{N,R}/c_A(n)
		\le \rho_{\rm res}/2
		\]
		from Lemma~\ref{lem:high-mode-error-routing}; this is allowed after
		enlarging \(R_0\) because all three quantities tend to zero for fixed
		\(N\).  The low-side term is charged to the trailing
		\(o_{N,C,L}(R^{-3})\) remainder, not to an additional fixed fraction of
		\(\delta_{\rm eff}\).  The only fixed low-side loss from the low/high
		coupling is the Young loss
		\(\eta_{\rm gap}R^{-3}\|f_{\leq}\|^2
		=(\delta_{\rm eff}/32)R^{-3}\|f_{\leq}\|^2\) already displayed in the
		statement of the lemma.  The endpoint pieces use the same first-branch endpoint
		bounds and projected endpoint controls as
		Lemma~\ref{lem:first-radial-complement-mixed}; the high angular factor is
		controlled by Lemma~\ref{lem:high-mode-angular-reserve}.
		
		On the pure high block, the nonnegative singular-potential displacement is
		discarded, and the remaining high errors are absorbed by
		Lemma~\ref{lem:high-mode-error-routing}.  This spends only the reserved
		\(\rho_{\rm res}\operatorname{Sep}_>\) summand and leaves the clean
		\((1-\rho_{\rm res})\operatorname{Sep}_>\) term appearing in \(B_N^h\).
		On finite radial complements and first-branch/radial-complement mixed terms,
		Lemma~\ref{lem:critical-endpoint-jost-green} and
		Lemma~\ref{lem:first-radial-complement-mixed} route the critical endpoint
		mass and derivative contributions into a fixed radial-complement reserve;
		after this reserve is removed the remaining radial-complement form has the
		post-reservation lower bound
		\({\mathcal Q}_{\rm rad}\ge c_{\rm rad}R^{-2}\|q_{\rm rad,\leq}\|^2\).
		
		Summing the six routed classes, the finite Hadamard low block, the high
		reserve absorption, and the radial-complement mixed estimate gives the
		displayed lower bound.  All constants have already been fixed before the
		final enlargement of \(R_0\), so the remaining fixed-\(N\) and fixed-parameter
		remainders are \(o_{N,C,L}(R^{-3})\).
	\end{proof}
	
	\begin{lemma}[Finite-block compression]
		\label{lem:finite-block-compression}
		With the constants chosen as above, the scalar compression of the high angular
		block gives
		\[
		\nu_2(\widetilde B_N^h)\ge \mu_2(L_h)
		\]
		uniformly for \(0\le h\le C\) and \(\Lip(h)\le L\), after the final
		enlargement of \(R_0\).
	\end{lemma}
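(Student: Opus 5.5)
We first fix what \(\widetilde B_N^h\) is.  It is the quadratic form on \(P_NL^2(\SSph^{n-1})\oplus L^2_{>N}(\SSph^{n-1})\) obtained from \(B_N^h\) by replacing the high term \(R^3(1-\rho_{\rm res})\operatorname{Sep}_>[w_>]\) by its scalar lower bound \((1-\rho_{\rm res})(b_n+\tau_{n,N+1})\|w_>\|^2\).  We also put back the fixed low-side Young loss \(-\eta_{\rm gap}\|f_\le\|^2\) from Lemma~\ref{lem:all-degree-lower-form-assembly}.  Thus
\[
\widetilde B_N^h[f_\le,g_>]
=\langle f_\le,(P_NL_hP_N-\eta_{\rm gap})f_\le\rangle
+\Bigl((1-\rho_{\rm res})(b_n+\tau_{n,N+1})-\tfrac{M_C^2}{4\eta_{\rm gap}}\Bigr)\|g_>\|^2 .
\]
This is block diagonal, with a finite block and a scalar high block.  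So \(\nu_2(\widetilde B_N^h)\) is the second smallest element of the combined spectrum.

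The first step justifies the scalar compression.  The radial part of \(\operatorname{Sep}_>\) is nonnegative on each fibre by the Dirichlet--Friedrichs Poincar\'e inequality used in Lemma~\ref{lem:high-mode-angular-reserve}.  We compare the fibre of degree \(l>N\) with the separated radial operator \(H_{l,R}\).  Since \(F_h\le s\), the centrifugal weight satisfies \(\sinh^{-2}(F_h)\ge\sinh^{-2}s\), so the pure displacement has the favorable sign, as in class (2).  Hence each fibre form is at least \(e_{l,R}-E_R\) times \(\|w_l\|^2\).  For \(N+1\le l\), monotonicity of \(e_{l,R}\) in \(l\) gives the lower bound \(e_{N+1,R}-E_R=(b_n+\tau_{n,N+1})R^{-3}+o_N(R^{-3})\), by Krist\'aly's fixed-degree expansion.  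The little-\(o\) term is absorbed into the final \(R_0\) enlargement.  This yields the high block coefficient above, up to an arbitrarily small loss.

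The second step is the comparison with \(\mu_2(L_h)\), which is elementary.  On the finite block we use Cauchy interlacing, or equivalently min-max restricted to \(P_NL^2\).  This gives \(\nu_k(P_NL_hP_N)\ge\mu_k(L_h)\) for \(k=1,2\), since \(P_NL^2\) is a subspace of the form domain.  The \(\eta_{\rm gap}=\delta_{\rm eff}/32\) shift costs only a small fraction of the gap.  To keep the inequality clean we interpret it as \(\nu_2\ge\mu_2(L_h)-\delta_{\rm eff}/32\), or we keep the loss separate; I expect the intended statement absorbs it this way.

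On the high block we must check
\[
(1-\rho_{\rm res})(b_n+\tau_{n,N+1})-\frac{M_C^2}{4\eta_{\rm gap}}\ \ge\ \mu_2(L_h)+1 .
\]
The right side is uniformly bounded: \(\mu_2(L_h)\le b_n+\tau_{n,1}+2\pi^2C\), by min-max with the first two eigenspaces of \(T_n\).  The left side grows without bound because \(\tau_{n,N+1}=\phi_n(N+1)\sim2\pi^2\log N\to\infty\), while \(\rho_{\rm res}<1/4\) and \(M_C,\eta_{\rm gap}\) depend only on \(n,C\).  So this is exactly a condition on \(N\), which is allowed because \(N\) is chosen after \(\eta_{\rm gap}\), \(M_C\) and \(\rho_{\rm res}\).

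The third step combines the blocks.  The two smallest values of the combined spectrum are either both finite-block eigenvalues, in which case the second is at least \(\mu_2(L_h)\), or at least one lies in the high block, which is at least \(\mu_2(L_h)\) anyway.  In both cases \(\nu_2(\widetilde B_N^h)\ge\mu_2(L_h)\), uniformly in \(h\).

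The main obstacle is the first step: making the scalar compression rigorous uniformly in \(l>N\) and in \(R\).  Krist\'aly's expansion is only fixed-degree, so monotonicity in \(l\) of the fibre ground energies must carry the bound from \(l=N+1\) to all larger \(l\).  This monotonicity follows from the centrifugal coefficient \(c_l\) increasing in \(l\) and from form comparison.
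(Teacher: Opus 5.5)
Your proposal follows the same route as the paper's proof. You compress each high fibre by replacing \(c_l\) with \(c_{N+1}\) and \(\sinh^{-2}(F_h)\) with \(\sinh^{-2}s\), which reduces it to the fixed-degree \(N+1\) asymptotic with \(b_n+\tau_{n,N+1}=b_{n+2(N+1)}\). You then choose \(N\) so that the high scalar level clears the uniform bound \(U_{n,C}=b_n+\tau_{n,1}+2\pi^2C\ge\mu_2(L_h)\), with a margin that absorbs the \(o_N(1)\) error. Finally you restrict min-max to \(P_NL^2\) on the low block and read off \(\nu_2\) from the block-diagonal structure. Your margin of \(1\) instead of \(\delta_{\rm eff}/8\) is immaterial.

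There is one setup correction. Do not put the Young loss \(-\eta_{\rm gap}\|f_\le\|^2\) into \(\widetilde B_N^h\). The form \(B_N^h\) displayed in Lemma~\ref{lem:all-degree-lower-form-assembly} does not contain it. The paper keeps \((\delta_{\rm eff}/32)R^{-3}\|f_\le\|^2\) as the single external low-side loss and subtracts it separately in the conclusion. It explicitly excludes that loss from \(\widetilde B_N^h\).

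With your definition, the low block's second eigenvalue is only \(\ge\mu_2(L_h)-\eta_{\rm gap}\). So the stated inequality does not follow, and your Step 3 claim that ``the second is at least \(\mu_2(L_h)\)'' contradicts your own Step 0. With the correct definition, your Step 3 gives exactly \(\nu_2(\widetilde B_N^h)\ge\mu_2(L_h)\), and no weakening is needed. If the weakened version were combined with the paper's final subtraction, the loss would be counted twice.

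Similarly, the high summand should be the flattened cylinder subspace \(\mathcal H_{>,R}\), carrying the \(L^2(ds\,d\theta)\) norm of \(w_>\), not \(L^2_{>N}(\SSph^{n-1})\). This does not change \(\nu_2\), since a scalar block is scalar either way. It does matter for feeding the compressed form into the min-max bound for \(\lambda_2\) on the actual Hilbert space.
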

	
	\begin{proof}
		After the physical low/high cross terms have already been Young-routed in
		Lemma~\ref{lem:all-degree-lower-form-assembly}, the remaining high block is
		compressed by a scalar lower bound.  Let \(\widetilde B_N^h\) be the
		block-diagonal form obtained from \(B_N^h\) after this scalar compression.
		Set
		\[
		\Gamma_N
		=
		(1-\rho_{\rm res})(b_n+\tau_{n,N+1})
		-\frac{M_C^2}{4\eta_{\rm gap}}.
		\]
		After the high-mode compression estimate below, \(\widetilde B_N^h\) is the
		block-diagonal form on
		\[
		P_NL^2(\SSph^{n-1})\oplus {\mathcal H}_{>,R},
		\]
		where \({\mathcal H}_{>,R}\) denotes the flattened fixed-cylinder Hilbert
		subspace of angular degrees \(>N\), with the same Dirichlet--Friedrichs
		endpoint conditions as in the high separated form.  It is given by
		\[
		\widetilde B_N^h[g_{\leq}+z_{>}]
		=
		\langle g_{\leq},P_NL_hP_Ng_{\leq}\rangle
		+\Gamma_N\|z_{>}\|^2.
		\]
		Thus the full high cylinder block has been replaced by the scalar level
		\(\Gamma_N\), repeated with the multiplicity of \({\mathcal H}_{>,R}\); no
		identification of \(z_>\) with a purely angular function is being made.
		This step does not reopen \(\operatorname{Sep}_>\), does not reabsorb
		class-2, class-4, class-6, or low/high physical cross errors, and uses only
		the already reduced scalar high coefficient
		\((1-\rho_{\rm res})(b_n+\tau_{n,N+1})\), minus the fixed Young penalty
		\(M_C^2/(4\eta_{\rm gap})\) from the bounded low/high first-branch coupling.
		The positive coefficient comes from the single clean summand
		\((1-\rho_{\rm res})\operatorname{Sep}_>\), not from a second use of the
		reserve summand.  Indeed, after the raw high form has been split as
		\[
		\operatorname{Sep}_>
		=
		\rho_{\rm res}\operatorname{Sep}_>
		+(1-\rho_{\rm res})\operatorname{Sep}_>,
		\]
		the first summand is used only in Lemma~\ref{lem:high-mode-error-routing}.
		This is a scalar split of the full nonnegative quadratic-form value, not a
		decomposition into separate kinetic and centrifugal subenergies.  Consequently
		the reserve inequality and the compression floor may use different lower
		bounds for \(\operatorname{Sep}_>\), because they are applied to the disjoint
		scalar multiples \(\rho_{\rm res}\operatorname{Sep}_>\) and
		\((1-\rho_{\rm res})\operatorname{Sep}_>\), respectively.
		For the second summand, monotonicity in angular degree and the favorable
		inequality \(\sinh^{-2}(F_h)\ge\sinh^{-2}s\) give, for \(l>N\),
		\[
		\begin{aligned}
			\operatorname{Sep}_{l}[w_l]
			&=
			\int\left(|\partial_sw_l|^2-\pi^2R^{-2}|w_l|^2\right)
			+c_l\int\sinh^{-2}(F_h)|w_l|^2  \\
			&\ge
			\int\left(|\partial_sw_l|^2-\pi^2R^{-2}|w_l|^2\right)
			+c_{N+1}\int\sinh^{-2}(s)|w_l|^2  \\
			&=
			\langle w_l,(H_{N+1,R}-E_R)w_l\rangle  \\
			&\ge
			\bigl((b_n+\tau_{n,N+1})R^{-3}+o_N(R^{-3})\bigr)\|w_l\|^2 .
		\end{aligned}
		\]
		The last inequality is the Rayleigh--Ritz lower bound for the fixed-degree
		operator \(H_{N+1,R}-E_R\), together with the fixed-\(N\) first-eigenvalue
		asymptotic; no decomposition of \(w_l\) into radial eigenfunctions is needed.
		Here \(c_l\) is increasing for \(l\ge1\), \(N\ge N_A(n)\ge1\), and \(N\) is
		fixed before \(R_0\).  The asymptotic in this last line is the fixed-degree
		\(N+1\) ball asymptotic.  Since \(b_n+\tau_{n,N+1}=b_{n+2(N+1)}\), this is
		exactly the
		degree-\(N+1\) shifted coefficient.  Thus compression uses only
		\[
		(1-\rho_{\rm res})
		\bigl((b_n+\tau_{n,N+1})R^{-3}+o_N(R^{-3})\bigr)\|w_>\|^2 .
		\]
		All radial-bottom and centrifugal contributions in this scalar floor have
		already been discounted by the factor \(1-\rho_{\rm res}\).
		Here the compression estimate is fixed by the same choice of \(N\).  Let
		\[
		U_{n,C}=b_n+\tau_{n,1}+2\pi^2C .
		\]
		The two-dimensional space spanned by a constant spherical harmonic and any
		degree-one harmonic gives, uniformly for \(0\le h\le C\),
		\[
		\mu_2(L_h)\le U_{n,C}.
		\]
		Since \(\tau_{n,N+1}=b_{n+2(N+1)}-b_n\to\infty\), the cutoff \(N\) is chosen
		large enough, after \(\eta_{\rm gap}\) and \(\rho_{\rm res}\) are fixed, that
		\[
		(1-\rho_{\rm res})(b_n+\tau_{n,N+1})
		-\frac{M_C^2}{4\eta_{\rm gap}}
		\ge U_{n,C}+\delta_{\rm eff}/8 .
		\]
		This is a simultaneous finite list of requirements on \(N\): we enlarge the
		same cutoff so that \(N\ge N_A(n)\), \(\sigma_N/c_A(n)\le\rho_{\rm res}/2\),
		the strengthened displayed compression inequality holds, and the finite-block
		ground-state energy approximation recorded in Subsection~\ref{sec:constants}
		holds.  The later threshold \(R_0\) then handles only the \(R\)-dependent
		remainders \(\sigma_{N,R}\), \(\tau_{N,R}\), and the fixed-\(N\) asymptotic
		errors.
		For this fixed \(N\), the scalar compression estimate has an additional
		\(o_N(1)\) contribution after multiplication by \(R^3\).  The norm
		\(\|w_>\|\) is the flattened Schrodinger \(L^2(ds\,d\theta)\) norm used in
		the high separated form.  After one further enlargement of \(R_0\), the
		fixed-\(N\) scalar error is bounded below by \(-\delta_{\rm eff}/16\).  Hence
		the actual compressed high block is above \(U_{n,C}+\delta_{\rm eff}/16\),
		and therefore above
		\(\mu_2(L_h)+\delta_{\rm eff}/16\).
		On the low block, min-max for the restricted form gives
		\[
		\mu_2(P_NL_hP_N|_{P_NL^2})\ge \mu_2(L_h),
		\]
		because the form domain has been restricted.  The form
		\(\widetilde B_N^h\) is block diagonal on
		\[
		P_NL^2(\SSph^{n-1})\oplus {\mathcal H}_{>,R},
		\]
		so its eigenvalue list is the union of the eigenvalues of the low block
		\(P_NL_hP_N|_{P_NL^2}\) and the scalar high level \(\Gamma_N\), with
		\(\Gamma_N\) repeated with the multiplicity of \({\mathcal H}_{>,R}\).  The
		compression-floor
		choice above gives
		\[
		\Gamma_N\ge U_{n,C}+\delta_{\rm eff}/16
		\ge \mu_2(L_h)+\delta_{\rm eff}/16 .
		\]
		Therefore
		\[
		\nu_2(\widetilde B_N^h)
		\ge
		\min\{\mu_2(P_NL_hP_N|_{P_NL^2}),\Gamma_N\}
		\ge \mu_2(L_h).
		\]
		The low-block Young loss is not included in \(\widetilde B_N^h\); it remains
		the single external \(\eta_{\rm gap}\) loss in
		Lemma~\ref{lem:all-degree-lower-form-assembly}.  The compression margin
		\(\delta_{\rm eff}/16\) is used only to keep the high scalar block away from
		the low spectral window.
	\end{proof}
	
	\paragraph{Ground-state upper trial.}
	The ground-state upper trial uses the same fixed angular cutoff \(N\):
	\(f_{1,N}\in P_NL^2(\mathbb S^{n-1})\) is the normalized compressed
	effective ground state.  By the choice of \(N\),
	\[
	\langle f_{1,N},L_hf_{1,N}\rangle
	=
	\mu_1(P_NL_hP_N|_{P_NL^2})
	\le \mu_1(L_h)+\delta_{\rm eff}/32 .
	\]
	The trial is the degree-adapted lift \(w_{\le}=J_R f_{1,N}\).  The isometry
	of \(J_R\) gives \(\|w_{\le}\|=\|f_{1,N}\|=1\), so the additive shifted-form
	estimate below is exactly the Rayleigh quotient estimate used to bound
	\(\lambda_1\).  Since
	\(P_NL^2(\mathbb S^{n-1})\) is finite-dimensional, all angular derivatives
	of \(f_{1,N}\) needed in the finite first-branch estimate are bounded by
	constants depending on \(N\) and \(n\), not on \(R\).  The exact shifted
	pulled-back form is evaluated directly for this finite trial.  By the exact
	finite-block shifted-form conclusion of
	Lemma~\ref{lem:finite-first-branch-hadamard},
	\[
	Q_h[w_{\le}]-E_R\|w_{\le}\|^2
	=
	R^{-3}
	\langle f_{1,N},P_N(T_n+2\pi^2h+b_n)P_Nf_{1,N}\rangle
	+o_{N,C,L,\delta_{\rm cut}}(R^{-3}).
	\]
	Because \(L_h=T_n+2\pi^2h+b_n\), the upper trial obtains the full operator
	energy without using radial-complement routing or the high-mode reserve.
	
	\subsection{The Lipschitz Passage}
	
	\begin{lemma}[Offset-domain spectral squeeze]
		\label{lem:offset-squeeze}
		Fix \(R>C>0\), and let \(h:\SSph^{n-1}\to[0,C]\) be Lipschitz.  If
		\(h_j:\SSph^{n-1}\to[0,C]\) are continuous and converge uniformly to \(h\),
		then, for the radial-height domains
		\[
		\Om=\{0\leq \rho<R-h(\theta)\},\qquad
		\Om_j=\{0\leq \rho<R-h_j(\theta)\},
		\]
		one has
		\[
		\lambda_k(\Om_j)\to\lambda_k(\Om)
		\]
		for every fixed \(k\ge1\), with Dirichlet eigenvalues counted with
		multiplicity.
	\end{lemma}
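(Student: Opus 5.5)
The plan is a squeeze between two slightly shrunken and slightly enlarged copies of the same domain, combined with domain monotonicity and the continuity of Dirichlet eigenvalues under radial rescaling of star-shaped domains. Set $\eps_j=\|h_j-h\|_\infty\to0$. Then $R-h-\eps_j\le R-h_j\le R-h+\eps_j$ pointwise on $\SSph^{n-1}$, so
\[
\Om^{-}_{j}:=\{0\le\rho<R-h(\theta)-\eps_j\}\subset\Om_j\subset\Om^{+}_{j}:=\{0\le\rho<R-h(\theta)+\eps_j\}.
\]
Domain monotonicity of indexed Dirichlet eigenvalues gives
\[
\lambda_k(\Om^+_j)\le\lambda_k(\Om_j)\le\lambda_k(\Om^-_j).
\]
So it suffices to show that both outer terms converge to $\lambda_k(\Om)$. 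Because only the fixed function $h$ appears in $\Om^\pm_j$, this reduces to a one-parameter continuity statement in the offset $t$. Concretely, we must show that $t\mapsto\lambda_k(\Om^{t})$, with $\Om^t=\{\rho<R-h(\theta)+t\}$, is continuous at $t=0$. Note that $\Om^t$ is a genuine star-shaped domain for $|t|<R-C$.

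For continuity, we use the radial map $\Phi_t(\rho,\theta)=(\rho\,(R-h(\theta)+t)/(R-h(\theta)),\theta)$. It is a bi-Lipschitz homeomorphism $\Om^0\to\Om^t$. Since $h$ is Lipschitz and $R-h\ge R-C>0$, it is smooth in $\rho$ and Lipschitz in $\theta$, and near the origin it is just a dilation by a factor in a compact subset of $(0,\infty)$. Its differential satisfies $\|D\Phi_t-I\|_\infty=O(|t|)$ with respect to the hyperbolic metric. To see this, compute in polar coordinates: the radial stretch factor is $1+t/(R-h)$, and the angular shear term involves $\rho\, t\nabla h/(R-h)^2$. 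The shear is measured against $\sinh\rho$ in the angular direction, hence stays bounded. The metric-coefficient ratio $\sinh(\lambda\rho)/\sinh\rho$ is uniformly $1+O(|t|)$ on $\rho\le R$ for $\lambda=1+O(t)$, with constants depending on $R$, which is fixed. Pulling back $H_0^1(\Om^t)$ by $\Phi_t$ gives $H_0^1(\Om^0)$, and both the Rayleigh quotient numerator and the $L^2$ mass change by factors $1+O(|t|)$. By the min-max principle we then obtain
\[
(1+C|t|)^{-1}\lambda_k(\Om^0)\le\lambda_k(\Om^t)\le(1+C|t|)\lambda_k(\Om^0).
\]
Here we also use that the interior of the compact domain coincides with $\Om^0$ when $h$ is continuous, as already noted in the proof of Theorem~\ref{thm:main}.

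The main obstacle is the bi-Lipschitz estimate near the origin and the uniformity of the metric comparison over the whole range $0\le\rho\le R$. Near the origin the polar coordinates degenerate, but there $\Phi_t$ restricts on each ray to multiplication by a bounded factor. The angular-shear term $\rho\,t\nabla h/(R-h)^2$ is measured against $\sinh\rho\ge\rho$, so its hyperbolic size is $O(|t|L/(R-C)^2)$. This stays bounded even as $\rho\to0$, so $\Phi_t$ is globally Lipschitz with constant $1+O(|t|)$, and the same holds for its inverse. No smoothness of $h$ beyond Lipschitz is needed. With the two-sided comparison in hand, letting $j\to\infty$ squeezes $\lambda_k(\Om_j)$ to $\lambda_k(\Om)$ for every fixed $k$. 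This proves the lemma without any monotonicity of the gap itself.
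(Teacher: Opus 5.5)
Your proof is correct. It shares the paper's outer structure: the paper also sets $\eps_j=\|h_j-h\|_\infty$, sandwiches $\Om^-(\eps_j)\subset\Om_j\subset\Om^+(\eps_j)$, and uses domain monotonicity of indexed Dirichlet eigenvalues. The difference is how you prove continuity of the offset families at $\eps=0$.

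The paper freezes $R$ and passes to the Poincar\'e ball chart, where it conjugates the hyperbolic Laplacian to a fixed uniformly elliptic Euclidean operator. It then invokes Daners' varying-domain theory. For the increasing inner family it uses the monotone-inside criterion. For the decreasing outer family it uses Lipschitz stability of $\overline\Om$ together with the outside-convergence criterion. Uniform-resolvent corollaries then give indexed eigenvalue convergence with multiplicity.

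You replace all of that with the explicit radial map $\Phi_t$ and min-max. Its hyperbolic differential has singular values and Jacobian $1+O(|t|)$; the shear entry is $\rho\nabla\lambda/\sinh\rho$, bounded by $|t|L/(R-C)^2$. Your route is elementary and self-contained, and it handles multiplicities automatically through min-max. It is also quantitative: it gives $|\lambda_k(\Om_j)-\lambda_k(\Om)|\le C(n,R,C,L)\,\|h_j-h\|_\infty\,\lambda_k(\Om)$ for large $j$, whereas the Daners route yields convergence without a rate. In exchange, the paper's mechanism is general and not tied to the star-shaped radial structure.

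One further simplification is available on your route. Since $R-h\ge R-C$, the additive offsets are dominated by multiplicative ones:
\[
\{\rho<(1-\eps/(R-C))(R-h)\}\subset\Om^-(\eps),\qquad \Om^+(\eps)\subset\{\rho<(1+\eps/(R-C))(R-h)\}.
\]
These are images of $\Om$ under the dilation $\rho\mapsto\mu\rho$, which is linear, hence smooth, in normal coordinates at $o$. Its differential has singular values $\mu$ and $\sinh(\mu\rho)/\sinh\rho=1+O_R(|\mu-1|)$. This removes the shear term entirely and shows the lemma holds for merely continuous $h$.
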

	
	\begin{proof}
		Let \(\eps_j=\|h_j-h\|_\infty\).  Define the offset radial domains
		\[
		\Om^-(\eps)=\{0\leq \rho<R-h(\theta)-\eps\},
		\qquad
		\Om^+(\eps)=\{0\leq \rho<R-h(\theta)+\eps\}.
		\]
		Then \(\eps_j\to0\).  Discarding finitely many indices, assume
		\(\eps_j<\min\{R-C,1\}\), so the inner offsets are nonempty and all offset
		domains lie in \(B_{R+1}(o)\).  Daners is applied to the monotone
		one-parameter offset families as \(\eps\downarrow0\), and the resulting
		convergence holds along the sequence \(\eps=\eps_j\).  The monotonicity here
		is in the continuous parameter \(\eps\); after Daners gives convergence as
		\(\eps\downarrow0\), we evaluate that convergence on the particular sequence
		\(\eps_j=\|h_j-h\|_\infty\).
		Because \(h-\eps_j\leq h_j\leq h+\eps_j\),
		\[
		\Om^-(\eps_j)\subset \Om_j\subset \Om^+(\eps_j).
		\]
		To invoke Daners' Euclidean varying-domain results, freeze this value of
		\(R\) and use the Poincar\'e ball model with \(o\) mapped to the Euclidean
		origin.  Then \(B_{R+1}(o)\) is the Euclidean ball
		\[
		|x|<\tanh((R+1)/2)<1,
		\]
		and the hyperbolic metric is conformal to the Euclidean metric with conformal
		factor \(4(1-|x|^2)^{-2}\).  On the compact closure of this fixed Euclidean
		ball the coefficients are smooth, bounded, and uniformly elliptic; there is
		no polar-coordinate singularity at \(o\).  The ellipticity constants may
		depend on this frozen \(R\), which is harmless because only the limit
		\(j\to\infty\) is being taken in the domain squeeze.  If
		\(w=\sqrt{\det g}\) is the hyperbolic volume density in this chart, the map
		\(U\psi=w^{1/2}\psi\) is a unitary identification from \(L^2(dV_{\HH^n})\)
		to Euclidean \(L^2(dx)\).  Since \(w^{1/2}\) is smooth, bounded, and bounded
		away from zero on the fixed chart, this unitary identifies the Dirichlet form
		domain with the Euclidean \(H_0^1\) domain and preserves the Dirichlet
		spectrum.  It conjugates the Dirichlet hyperbolic Laplacian to a symmetric
		uniformly elliptic operator
		\[
		-\operatorname{div}(A\nabla\cdot)+V
		\]
		with \(A\) smooth and uniformly elliptic and \(V\) smooth and bounded on the
		fixed chart.  The domain-theoretic point is the Mosco convergence of the
		Dirichlet spaces \(H_0^1(\Om^\pm(\eps))\) for this one fixed uniformly
		elliptic form.  The coefficients do not vary with \(\eps\), and the graph norm
		of the principal part is equivalent to the standard \(H_0^1\) norm on every
		offset domain.  Daners' monotone convergence and stability hypotheses
		therefore give uniform-resolvent convergence for the fixed principal form; the
		bounded symmetric potential term \(\int V|u|^2\) is form-continuous on \(L^2\)
		and is carried through the same resolvent convergence after shifting by a
		constant if needed.  The eigenvalue convergence used below is then the
		standard finite-eigenvalue consequence of this resolvent convergence, via
		Daners' spectral-continuity corollaries.  The value of \(R\) is fixed
		throughout this domain-convergence step.  In the
		Poincar\'e chart the radial graph \(\rho=R-g(\theta)\) has Euclidean radius
		\[
		\varrho_g(\theta)=\tanh((R-g(\theta))/2),
		\]
		and hence
		\[
		|\nabla_{\SSph^{n-1}}\varrho_g|
		\le
		\frac12 \operatorname{sech}^2((R-C-\eps_0)/2)\,
		\Lip(g)
		\]
		whenever \(-\eps_0\le g\le C+\eps_0\).  Thus the inner and outer offset radial
		graphs, corresponding to \(g=h+\eps\) and \(g=h-\eps\), have Euclidean
		Lipschitz constants bounded uniformly for
		\(0<\eps<\eps_0\).  In particular, both offset families are bounded
		Lipschitz domains in the fixed chart with a uniform Lipschitz bound; this is
		the regularity input used in the decreasing-domain convergence step below.
		
		The inner domains \(\Om^-(\eps)\) increase to \(\Om\), so Daners'
		monotone-inside criterion \cite[Proposition 7.1]{Daners}, followed by
		uniform resolvent convergence and finite-eigenvalue continuity
		\cite[Corollaries 4.7, 4.2 and Remark 4.3]{Daners}, gives convergence of the
		indexed eigenvalues
		\[
		\lambda_k(\Om^-(\eps))\to\lambda_k(\Om)
		\]
		for every fixed \(k\).  For the decreasing outer family, the domains
		\(\Om^+(\eps)\) decrease in the fixed chart and have uniformly bounded
		Lipschitz constants as noted above.  Since \(h\) is Lipschitz and \(R>C\), the
		open limit domain
		\[
		\Om=\{0\le\rho<R-h(\theta)\}
		\]
		is a bounded Lipschitz domain in this chart.  Hence its closure
		\(\overline\Om\) has Lipschitz boundary, satisfies the segment property, and
		is stable in Daners' sense by the Lipschitz-domain stability criterion
		\cite[Proposition 7.3(1) and the following sentence]{Daners}.  Moreover
		\[
		\operatorname{int}\left(\bigcap_{\eps>0}\Om^+(\eps)\right)=\Om
		\]
		because \(h\) is continuous.  Therefore Daners' outside convergence criterion,
		applied to the decreasing outer family with open limit \(\Om\), stable closed
		limit \(\overline\Om\), the displayed interior-intersection identity, the
		bounded-domain uniform-resolvent corollary, and the finite-eigenvalue
		continuity result
		\cite[Proposition 7.4, Proposition 7.3(1), Corollaries 4.7, 4.2 and
		Remark 4.3]{Daners} give
		\[
		\lambda_k(\Om^+(\eps))\to\lambda_k(\Om)
		\]
		for every fixed \(k\).  The last step is indexed eigenvalue convergence
		counting multiplicity: Corollary 4.2 gives persistence of the corresponding
		finite spectral projections under uniform resolvent convergence, and
		Remark 4.3 states the resulting continuity of each finite eigenvalue system.
		Thus the assertion applies to \(k=2\) even if eigenvalue multiplicities cross.
		In particular, along \(\eps=\eps_j\),
		\[
		\lambda_k(\Om^-(\eps_j))\to\lambda_k(\Om),
		\qquad
		\lambda_k(\Om^+(\eps_j))\to\lambda_k(\Om).
		\]
		Domain monotonicity of indexed Dirichlet eigenvalues in the inclusions above
		gives
		\[
		\lambda_k(\Om^+(\eps_j))
		\leq \lambda_k(\Om_j)
		\leq \lambda_k(\Om^-(\eps_j)),
		\]
		and hence \(\lambda_k(\Om_j)\to\lambda_k(\Om)\).
	\end{proof}
	
	\subsection{Conclusion of the radial-height proof}
	
	By Lemma~\ref{lem:all-degree-lower-form-assembly}, and with constants chosen
	in the order
	\[
	\delta_{\rm eff}\to \eta_{\rm gap}\to N_A\to c_A\to \rho_{\rm res}
	\to c_{\rm rad}^0,R_{\rm rad}\to c_{\rm rad}\to \eta_{\rm form}
	\to \delta_{\rm cut}\to M_C\to N\to R_0,
	\]
	the shifted form is bounded below by the sum of the finite low block, the
	finite radial-complement block, and the high angular block, up to the single
	displayed low Young loss and the \(o_{N,C,L}(R^{-3})\) remainder.  The
	compression step replaces the high block by the scalar block
	\(\Gamma_N\|w_>\|^2\), so the min-max principle may be applied to this
	assembled lower form before the final subtraction.  Thus it remains only to
	locate the second level of the lower form.  After
	the choice of \(N\) and the subsequent enlargement of \(R_0\), the scalar
	compression after the Young split leaves the high block above the common
	energy offset by
	\[
	E_R+R^{-3}\bigl(U_{n,C}+\delta_{\rm eff}/16\bigr),
	\]
	while the radial-complement block lies above the first radial level in the
	same angular degree by at least \(c_{\rm rad}R^{-2}\).  This comparison is
	uniform in the angular degree by Lemma~\ref{lem:radial-branch-gap}.  The first
	radial level is nondecreasing in angular degree, since the centrifugal
	coefficient \(c_l\) is nondecreasing in \(l\).  Hence
	\[
	\lambda_{1,l,R}\ge \lambda_{1,0,R}
	=E_R+b_nR^{-3}+o_n(R^{-3})
	\]
	for every \(l\), and the radial-complement block lies above
	\[
	E_R+c_{\rm rad}R^{-2}-|b_n|R^{-3}-o_n(R^{-3}).
	\]
	The two-dimensional trial bound gives \(\mu_2(L_h)\le U_{n,C}\).  After
	enlarging \(R_0\) past \((U_{n,C}+1+|b_n|)/c_{\rm rad}\) and absorbing the
	fixed-dimensional remainder,
	\[
	E_R+c_{\rm rad}R^{-2}-|b_n|R^{-3}-o_n(R^{-3})
	>
	E_R+R^{-3}(U_{n,C}+1).
	\]
	Thus neither the high angular block nor the radial complement can create the
	second min-max level below the compressed low block;
	the \(o_{N,C,L}(R^{-3})\) remainder is absorbed after the same final enlargement
	of \(R_0\).  Therefore the min-max principle and the lower form give
	\[
	\lambda_2(\Om_R(h))
	\geq E_R+R^{-3}\nu_2(\widetilde B_N^h)
	-(\delta_{\rm eff}/32)R^{-3}
	-o_{N,C,L}(R^{-3}).
	\]
	The explicit \(\delta_{\rm eff}/32\) loss here is the fixed low side of the
	bounded low/high Young split.  The finite-low coefficient remainders are
	\(o_{N,C,L}(R^{-3})\), and the high-mode reserve coefficients
	\(\sigma_N,\sigma_{N,R},\tau_{N,R}\) and \(\tau'_{N,R}\) are made small in
	the \(N\)-then-\(R\) choices and are absorbed before compression, so no
	unabsorbed high-mode loss remains in the displayed finite block.
	The ground-state upper trial gives
	\[
	\lambda_1(\Om_R(h))
	\leq E_R+R^{-3}\bigl[\mu_1(L_h)+\delta_{\rm eff}/32\bigr]
	+o_{N,C,L}(R^{-3}).
	\]
	Lemma~\ref{lem:finite-block-compression} gives
	\[
	\nu_2(\widetilde B_N^h)
	\ge \mu_2(L_h).
	\]
	Subtracting the upper trial from the lower form and then using this
	compression estimate gives the explicit loss display
	\[
	\begin{aligned}
		\lambda_2-\lambda_1
		&\geq
		R^{-3}\bigl[\mu_2(L_h)-\mu_1(L_h)\bigr]
		-(\delta_{\rm eff}/32)R^{-3}
		-(\delta_{\rm eff}/32)R^{-3}
		-o_{N,C,L}(R^{-3})  \\
		&\geq (15\delta_{\rm eff}/16)R^{-3}-o_{N,C,L}(R^{-3}).
	\end{aligned}
	\]
	The two displayed losses are, respectively, the external
	\(\eta_{\rm gap}=\delta_{\rm eff}/32\) low-side Young loss from the bounded
	low/high first-branch splitting and the ground-state upper-trial slack.  The
	finite-low coefficient remainders are \(o_{N,C,L}(R^{-3})\), while the
	radial-complement and high-mode coefficient losses have already been absorbed
	by the post-reservation radial margin and by the high-mode reserve before
	compression.
	After increasing \(R_0\), this is at least
	\[
	(\delta_{\rm eff}/4)R^{-3}.
	\]
	
	This proves the asserted gap for smooth heights satisfying
	\(0\le h\le C\) and \(\Lip(h)\le L\), with constants depending on \(n,C,L\)
	only through the choices above and with final gap constant
	\(c(n,C)=\delta_{\rm eff}(n,C)/4\).  For a Lipschitz height \(h\), choose a
	positive smooth approximate identity \(K_j\) on \(SO(n)\) and set
	\[
	h_j(\theta)=\int_{SO(n)}h(A\theta)K_j(A)\,dA .
	\]
	Then \(h_j\in C^\infty(\SSph^{n-1})\), \(h_j\to h\) uniformly,
	\[
	0\le h_j\le C,\qquad \Lip(h_j)\le L,
	\]
	because rotations are isometries of the sphere and \(K_j\) is positive with
	unit mass.  The smooth estimate applies to \(\Om_R(h_j)\) with the same
	\(c(n,C)\) and \(R_0(n,C,L)\).  Lemma~\ref{lem:offset-squeeze} gives
	\(\lambda_k(\Om_R(h_j))\to\lambda_k(\Om_R(h))\) for \(k=1,2\), and passing to
	the limit proves Theorem~\ref{thm:sh-lip}.
	
	\section{Discussion and Further Questions}
	
	The theorem answers the large-diameter rate question for compact horoconvex
	domains by supplying a lower bound with the same \(D^{-3}\) power as the
	Nguyen--Stancu--Wei upper construction.
	Khan--Tuerkoen already give a qualitative diameter-dependent lower bound for
	horoconvex domains, but their displayed large-diameter rate is much smaller.
	The present proof identifies the polynomial scale by replacing the domain with
	a fixed-width radial-height problem and proving a uniform gap for the limiting
	angular family in all dimensions.  A direct ball comparison gives a shorter
	independent proof in dimensions \(n=2,3\).
	This section records the scope of the theorem and the remaining questions.
	
	The theorem gives the lower bound at the same diameter scale as the
	Nguyen--Stancu--Wei examples.  Earlier lower bounds for horoconvex domains
	show positivity, but their explicit dependence on the diameter is much smaller
	in the large-diameter limit.  The present argument rules out a gap below a
	fixed multiple of \(D^{-3}\) for large horoconvex domains.  It leaves the sharp
	constant and extremal geometry open.
	
	The theorem is a large-diameter rate bound for horoconvex domains in
	\(\mathbb H^n\).  General geodesic convexity and first-eigenfunction
	log-concavity remain separate problems.  The proof uses the geodesic ball as
	calibration, while a general large horoconvex domain retains the angular
	boundary height in the limiting problem.
	
	The proof does not identify the sharp constant.  The constant produced here
	comes from the compactness gap of the effective angular operator
	\(T_n+2\pi^2h+b_n\) over the bounded height class \(0\le h\le2\log 2\), and
	from several nonsharp reserves in the radial-height theorem.
	Thus the result should be read as a scale theorem.  It proves that the
	diameter decay cannot be worse than order \(D^{-3}\) for large horoconvex
	domains, but it does not identify the infimum of
	\(D^3(\lambda_2-\lambda_1)\) in the large-diameter limit.
	
	Ball asymptotics fix the expected scale and provide the calibration.  The
	scalar radius comparison by itself does not control a large horoconvex domain
	whose boundary height depends on direction.  The proof keeps that height as
	part of the limiting problem.  The large-domain gap is then reduced to a
	uniform spectral gap for an angular operator, rather than to a one-parameter
	radius comparison.
	
	In small diameter the Euclidean \(D^{-2}\) scale is the relevant model.  For
	large horoconvex domains, Nguyen--Stancu--Wei's ball and model-domain examples
	show that \(D^{-3}\) is the only possible diameter power for a uniform lower
	bound.  The present argument supplies that power from below.  It improves the
	previously known qualitative large-diameter lower bounds, while leaving the
	optimal coefficient and extremal geometry open.
	
	A first estimate suggested by the inradius/circumradius theorem is to compare
	\(\Omega\) with concentric balls of radii \(R-2\log 2\) and \(R\).  This
	correctly detects the ball asymptotic and gives a useful check on the
	coefficient arithmetic, but by itself it is too crude in dimensions \(n\ge3\).
	The reason is simple: shifting the radius of the first eigenvalue changes the
	\(R^{-2}\) correction at order \(R^{-3}\), and in higher dimension this
	loss is of the same order as the ball-gap coefficient and need not be
	smaller.  The radial-height argument keeps the boundary displacement as an
	angular variable instead of replacing the domain by its inner ball.  This is
	the step which recovers a positive uniform \(D^{-3}\) lower bound in all
	dimensions.
	
	More explicitly, the ball calculation gives, for every fixed \(a\),
	\[
	\lambda_2(B_{r+a}^{\mathbb H^n})-\lambda_1(B_r^{\mathbb H^n})
	=
	\left({4\pi^2\over n-1}-2a\pi^2\right)r^{-3}+o(r^{-3}).
	\]
	This check is sensitive to the precise shift.  With the sharper nonconcentric
	shift \(\log 2\), Proposition~\ref{prop:low-dim-ball-comparison} gives the
	large-diameter rate in dimensions \(n=2,3\).  In dimensions \(n\ge4\), the same
	coefficient is negative.  With the cruder \(2\log 2\) concentric annular
	comparison it is already negative for \(n\ge3\), so the \(n=3\) sign is
	reversed.  This sensitivity is exactly why the scalar radius-shift comparison
	cannot prove a uniform \(D^{-3}\) lower bound in every dimension.  The theorem
	instead uses the effective angular operator for the full radial height in all
	dimensions.
	Nor can this be repaired in dimensions \(n\ge4\) by sharpening the
	inradius/circumradius pinching: the sharp Borisenko--Miquel expression
	\[
	\log{(1+\sqrt{\tau})^2\over 1+\tau},
	\qquad \tau=\tanh(r/2),
	\]
	still tends to \(\log2\) as \(r\to\infty\).  Thus the large-diameter
	coefficient obstruction in Proposition~\ref{prop:low-dim-ball-comparison} is
	not an artifact of using a nonsharp radius estimate.
	
	The proof also avoids the interior rolling-ball or quantitative-boundary
	hypotheses which appear in several boundary-neighborhood methods; compare, for
	example, the integral-Ricci gap estimates of
	Ramos Oliv\'e--Rose--Wang--Wei
	\cite{RamosOliveRoseWangWei}.  Horoconvexity supplies the needed large-scale
	control in a different way: it gives a common-center annulus and, through
	horospherical support functions, a uniformly Lipschitz radial height.  The
	analytic estimates are then made on this radial-height family rather than on a
	tubular neighborhood of a smooth boundary with a uniform interior ball
	condition.
	
	The argument also explains why the existing exponentially small bounds do not
	see the expected large-diameter behavior.  After the common-center annulus
	reduction, a bounded radial-height perturbation of a large ball shifts the
	first radial branch at order \(R^{-3}\).  The high angular modes and the
	radial complement remain separated, so the low-energy spectral problem reduces
	to a compact angular problem.  This is the mechanism behind the polynomial
	scale.  Horoconvexity enters through the
	Borisenko--Miquel radius control and the
	horospherical-support representation; together they provide a uniform
	radial-height class without imposing an interior rolling-ball condition.
	
	This also points to the main limitation of the argument.  The compactness
	proof of the angular gap is qualitative.  It proves that some positive
	constant survives uniformly over all admissible radial heights, but it does
	not reveal which height profiles nearly minimize the gap.  An effective
	version of this proof would require a quantitative spectral gap for the family
	\(T_n+2\pi^2h+b_n\) with \(0\le h\le2\log 2\).  More direct semigroup or
	kernel-pinching approaches may also be possible, provided the corresponding
	cone-contraction step can be made quantitative.  None of those effective
	questions is needed for the large-diameter power proved here.
	
	The geodesic ball remains the calibration case.  Krist\'aly's large-radius
	first-eigenvalue expansion, combined with the \(n\mapsto n+2\) branch shift
	for the \(l=1\) mode, gives
	\[
	\lambda_2(B_R^{\HH^n})-\lambda_1(B_R^{\HH^n})
	\sim {4\pi^2\over (n-1)R^3}.
	\]
	For a general horoconvex domain the boundary height \(h\) enters the limiting
	angular operator.  The proof shows that this operator has a uniform spectral
	gap over the compact admissible height class.  It does not show that the
	round height minimizes the leading constant, nor does it rule out a different
	large-diameter extremal profile.
	
	The same radial-height block structure also suggests a stronger individual
	eigenvalue asymptotic.  For fixed \(k\), one expects the first \(k\) branches
	to be governed by the first \(k\) eigenvalues of the effective angular
	operator \(T_n+2\pi^2h+b_n\).  The present proof does not state that
	refinement.  It uses fixed finite reserves and records only the estimates
	needed for the gap lower bound.  A separate expansion theorem would require an
	\(\varepsilon\)-version of the low/high compression, uniform finite-angular
	truncation for the relevant effective spectral subspaces, and matching upper
	trial spaces, already for the two-dimensional space controlling
	\(\lambda_2\).  This is a natural companion refinement, but it is not part of
	the theorem proved here.
	
	Several refinements remain.  An effective or sharp value of the constant
	\(c_n\) would require a quantitative lower bound for the angular gap over all
	admissible heights, replacing the compactness argument used here.  Numerical
	exploration of the limiting height problem can guide this question, but no
	computation is used in the proof of the present lower bound.
	
	A sharper asymptotic problem asks for the infimum of the gap among large
	horoconvex domains.  The present proof shows the correct power but does not
	identify a minimizing angular height.
	
	The log-concavity problem is separate.  The large-diameter gap proof does not
	prove \(\operatorname{Hess}\log\psi_1\le0\).  Wei--Xiao's small-diameter theorem
	and the conformal-concavity estimates of Khan--Saha--Tuerkoen suggest useful
	tools, but the present argument uses a spectral reduction rather than a
	pointwise Hessian estimate.  A perturbative question remains: whether the first
	eigenfunction of a bounded radial-height perturbation of a large ball is
	governed, to leading order, by the positive ground state of the effective
	angular operator, and whether such structure can be converted into a global
	hyperbolic log-concavity principle.
	
	\paragraph*{Disclosure on AI assistance.}
	The authors used AI systems and AI-assisted search and review tools, including
	ChatGPT/Codex, Gemini, Claude, Grok, OpenRouter-routed models, Manus, and Exa,
	for brainstorming, source triage, objection generation, notation and exposition
	review, and repository/paper-editing assistance.  AI output was not treated as
	mathematical authority.  The authors reviewed, edited, and remain responsible
	for all theorem statements, proofs, constants, citations, and final prose.

\end{document}